\theoremstyle{plain}
\newtheorem{theorem}{Theorem}
\newtheorem{proposition}{Proposition}
\newtheorem{corollary}{Corollary}
\newtheorem{lemma}{Lemma}[section]
\theoremstyle{definition}
\newtheorem{definition}{Definition}[section]
\newtheorem{example}{Example}[section]
\newtheorem{problem}{Problem}
\newcommand{\nats}{\mathbb{N}}
\newcommand{\abs}[1]{\left\lvert#1\right\rvert}
\let\leq\leqslant
\let\geq\geqslant
\let\phi\varphi
\title{Sizes of flat maximal antichains of subsets}
\author[J.R. Griggs]{Jerrold R. Griggs$^1$}
\thanks{The research of Jerrold Griggs is supported in part by grant \#282896 from the Simons Foundation.}
\author[T. Kalinowski]{Thomas Kalinowski$^2$}
\author[U. Leck]{Uwe Leck$^3$}
\author[I.T. Roberts]{Ian T. Roberts$^4$}
\author[M. Schmitz]{Michael Schmitz$^3$}
\address{$^1$Department of Mathematics, University of South Carolina, Columbia, SC 29212, USA }
\address{$^2$Institut f\"ur Mathematik, Universit\"at Rostock, Germany}
\address{$^3$Institut f\"ur Mathematik, Europa-Universit\"at Flensburg, Germany}
\address{$^4$Faculty of Arts and Society, Charles Darwin University, Darwin, NT, Australia}
\email[J.R.~Griggs]{griggs@math.sc.edu}
\email[T.~Kalinowski]{thomas.kalinowski@uni-rostock.de}
\email[U.~Leck]{uwe.leck@uni-flensburg.de}
\email[I.T.~Roberts]{ian.roberts@cdu.edu.au}
\email[M.~Schmitz]{michael.schmitz@uni-flensburg.de}
\date{\today}
\begin{document}

\begin{abstract}
  This is the second of two papers investigating for which positive integers $m$ there exists a maximal antichain of size $m$ in the Boolean lattice $B_n$ (the power set of $[n]:=\{1,2,\dots,n\}$, ordered by inclusion). In the first part, the sizes of maximal antichains have been characterized. Here we provide an alternative construction with the benefit of showing that almost all sizes of maximal antichains can be obtained using antichains containing only $l$-sets and $(l+1)$-sets for some $l$.
\end{abstract}

\maketitle

\section{Introduction}\label{sec:intro}

In the Boolean lattice $B_n$ of all subsets of $[n]:=\{1,2,\dots,n\}$, ordered by inclusion, an \emph{antichain} is a family of subsets such that no one contains any other one. By a classical theorem of Sperner~\cite{Sperner1928}, the maximum size of an antichain in $B_n$ is $\binom{n}{\lfloor n/2 \rfloor}$, and it is obtained only by the collections $\binom{[n]}{\lfloor n/2\rfloor}$ and $\binom{[n]}{\lceil n/2\rceil}$, where we write $\binom{[n]}{l}$ for the $l$-th \emph{level}\/ of $B_n$, i.e., for the collection of all $l$-element subsets of $[n]$. An antichain is called \emph{maximal} if no $B\subseteq[n]$ with $B\not\in\mathcal A$ can be added to $\mathcal A$ without destroying the antichain property. Let $S(n)=\{\abs{\mathcal A}\,:\,\mathcal A\text{ is a maximal antichain in }B_n\}$. In~\cite{Griggs2023}, we have proved the following characterization for the sets $S(n)$.
\begin{theorem}[\cite{Griggs2023}]\label{thm:MAC_sizes}
  Let $k=\lceil n/2\rceil$, and let $m$ be an integer with $1\leq m\leq\binom{n}{k}$. Then $m\in S(n)$ if and only if $m\leq\binom{n}{k}-k\left\lceil(k+1)/2\right\rceil$ or $m$ has the form
  \[m=\binom{n}{k}-tl+\binom{a}{2}+\binom{b}{2}+c\]
  for some integers $l\in\{k,n-k\}$, $t\in\{0,\dots,k\}$, $a\geq b\geq c\geq 0$, $a+b\leq t$. Moreover, if $m\in S(n)$ satisfies $m\geq\binom{n}{k}-k^2$ then there exists a maximal antichain $\mathcal A$ with $\abs{\mathcal A}=m$ and $\mathcal A\subseteq\binom{[n]}{k}\cup\binom{[n]}{k+1}$ or $\mathcal A\subseteq\binom{[n]}{k-1}\cup\binom{[n]}{k}$.
\end{theorem}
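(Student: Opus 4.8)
\emph{The plan.} I would reduce the whole statement to the combinatorics of \emph{flat} maximal antichains, i.e.\ maximal antichains contained in two consecutive levels, and then translate their sizes into shadow deficiencies. Fix the level pair $(k,k+1)$ and write such a family as $\mathcal A=\mathcal F\cup\mathcal G$ with $\mathcal F\subseteq\binom{[n]}{k}$ and $\mathcal G\subseteq\binom{[n]}{k+1}$. First I would record the elementary equivalence: $\mathcal A$ is a maximal antichain in $B_n$ if and only if no member of $\mathcal F$ lies below a member of $\mathcal G$, every $k$-set outside $\mathcal F$ lies below some member of $\mathcal G$, and every $(k+1)$-set outside $\mathcal G$ lies above some member of $\mathcal F$; each condition is exactly what is needed so that every $X\in B_n$ is comparable to a member of $\mathcal A$ (compare $X$ with any $k$- or $(k+1)$-set lying between $X$ and the relevant level). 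The first two conditions say precisely that $\binom{[n]}{k}\setminus\mathcal F$ equals the lower shadow $\partial\mathcal G$, so that
\[
  \abs{\mathcal A}=\binom{n}{k}-\abs{\partial\mathcal G}+\abs{\mathcal G}.
\]
Writing $t=\abs{\mathcal G}$ and letting $d=t(k+1)-\abs{\partial\mathcal G}$ be the shadow deficiency (the number of coincidences among the $k$-subsets of members of $\mathcal G$), we obtain $\abs{\mathcal A}=\binom{n}{k}-tk+d$. Pushing members of the full middle level $\binom{[n]}{k}$ down into level $k-1$ instead (upper shadows of $(k-1)$-sets) yields the companion formula with coefficient $n-k$, which is where the two values $l\in\{k,n-k\}$ come from.

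\emph{Achievability and the \textbf{Moreover} claim.} For the ``if'' direction in the range $m\ge\binom{n}{k}-k^2$ I would build $\mathcal G$ explicitly and read off flatness for free. The basic block is a \emph{simplex}: taking $a$ of the $(k+1)$-subsets of a fixed $(k+2)$-set, every pairwise intersection is a distinct $k$-set covered exactly twice, so such a block has deficiency exactly $\binom{a}{2}$ while using $a$ sets. Placing two simplices whose ground $(k+2)$-sets meet in at most $k-1$ points (possible once $n$ is large, since then no cross-pair shares a $k$-subset), adding $c$ further sets each adjacent to the existing structure, and padding with pairwise ``distant'' sets up to a total of $t$ sets, realizes deficiency exactly $\binom{a}{2}+\binom{b}{2}+c$ for any prescribed $t\ge a+b$. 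As $t\le k<n$ there is ample room for this, and the third (covering) condition then holds automatically, because some $k$-subset of any $(k+1)$-set outside the small family $\mathcal G$ avoids $\partial\mathcal G$; hence the family is a flat maximal antichain. Since the smallest size so produced is $\binom{n}{k}-k^2$ (at $t=k$, $d=0$), this simultaneously gives achievability and the \textbf{Moreover} statement throughout $m\ge\binom{n}{k}-k^2$.

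\emph{The converse, and the main obstacle.} The hard direction is to show that no other sizes occur in the top range. For flat antichains this is the assertion that the deficiency of \emph{any} family of $t$ sets of size $k+1$ is of the form $\binom{a}{2}+\binom{b}{2}+c$ with $a+b\le t$, which is a sharpened Kruskal--Katona analysis: I would show that $d$ is maximized by colex-initial segments with maximum value $\binom{t}{2}$, and, crucially, that the attainable deficiencies have exactly the gap structure encoded by two binomial terms plus a linear slack (for example $d=5$ cannot occur for $t=4$, where the attainable set is $\{0,1,2,3,4,6\}$). The engine for this is a structural lemma stating that a deficiency-extremal family decomposes, up to lower-order slack, into at most two simplices, which is precisely what produces the two $\binom{\cdot}{2}$ summands; compression/shifting arguments on $\mathcal G$ should reduce the general configuration to these canonical ones. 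I expect this extremal lemma --- together with the step that reduces an \emph{arbitrary} maximal antichain of size $\ge\binom{n}{k}-k^2$ to a flat one of equal size --- to be the main obstacle, since the latter reduction is exactly where non-flat antichains must be controlled.

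\emph{The low range.} Finally, for $m\le\binom{n}{k}-k\lceil(k+1)/2\rceil$ every value occurs, but flatness is no longer asserted, so here it suffices to exhibit \emph{some} maximal antichain of each size. I would start from a near-extremal flat antichain and repeatedly modify it one set at a time --- trading a set for several sets one level away, using additional levels as needed --- so that every integer in the interval $[1,\binom{n}{k}-k\lceil(k+1)/2\rceil]$ is attained. Assembling the three ranges yields the claimed characterization of $S(n)$ together with the two-level realizability above $\binom{n}{k}-k^2$.
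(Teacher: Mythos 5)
First, a point of order: the paper you were given does not prove this statement at all --- it is Theorem~1 of the companion paper and is imported here by citation to \cite{Griggs2023}. So there is no in-paper proof to compare against; your proposal has to be judged as a self-contained argument, and as such it is a correct high-level plan rather than a proof. Your framework does match the machinery the present paper uses for its own results: the maximality criterion $\mathcal F=\binom{[n]}{k}\setminus\Delta\mathcal G$ together with the covering condition on level $k+1$, the size bookkeeping $\abs{\mathcal A}=\binom{n}{k}-tk+d$ with $d$ the shadow deficiency, the companion coefficient $n-k$ from the level pair $(k-1,k)$, and simplices (all $(k+1)$-subsets of a fixed $(k+2)$-set) contributing deficiency exactly $\binom{a}{2}$ are all sound, and your observation that $d=5$ is unattainable at $t=4$ correctly captures the gap structure the theorem encodes.

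The genuine gaps are these. (i) Both hard directions are named but not proved. The assertion that the attainable deficiencies of a $t$-member family are exactly $\left\{\binom{a}{2}+\binom{b}{2}+c\,:\,a\geq b\geq c\geq 0,\ a+b\leq t\right\}$, via a structural lemma that near-extremal families decompose into at most two simplices, is precisely the core of \cite{Griggs2023}; "compression/shifting arguments should reduce the general configuration" is not an argument, since shifting monotonically changes $d$ but does not obviously preserve the fine gap structure one must establish. Likewise the reduction of an \emph{arbitrary}, possibly non-flat, maximal antichain of size at least $\binom{n}{k}-k^2$ to a flat one of equal size --- which you yourself flag as the main obstacle --- is exactly where the "only if" direction lives; without it nothing is proved about general antichains. (ii) There is a concrete flaw in the achievability construction: you realize the $+c$ term by adding $c$ \emph{extra} sets, which consumes $a+b+c$ sets before padding, but the theorem permits $a+b=t$ with $0<c\leq b$ (e.g. $t=4$, $(a,b,c)=(3,1,1)$, $d=4$), where no extra sets are available; the $c$ incidences must instead be carried by the placement of the $b$-simplex itself relative to the first. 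Moreover "possible once $n$ is large" is not available here: $n\in\{2k-1,2k\}$ is tied to $k$, any two $(k+2)$-sets intersect in at least $5$ points when $n=2k-1$, so disjointness of the two blocks and the padding both need care, and small $k$ requires separate treatment. (iii) The low range --- every $m\leq\binom{n}{k}-k\left\lceil(k+1)/2\right\rceil$ occurs, all the way down to $m=1$ --- is a substantial multi-level construction in its own right, which the one-sentence "trade a set for several sets one level away" sketch does not substantiate.
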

The last statement in Theorem~\ref{thm:MAC_sizes} says that all sizes close to the maximum possible size $\binom{n}{k}$ are obtained by \emph{flat} antichains, which means that there exists an integer $l$ such that $\abs{A}\in\{l,l+1\}$ for every $A\in\mathcal A$ . In the present paper, we add this as a constraint and investigate for which $m\in S(n)$ there exists a \emph{flat} maximal antichain of size $m$. Our main result is the following answer to this question.
\begin{theorem}\label{thm:main_result}
  Let $k=\lceil n/2\rceil$, and let $m$ be an integer with
\[\binom{n}{2}-\left\lfloor\frac{(n+1)^2}{8}\right\rfloor\leq m\leq\binom{n}{k}-k\left\lceil\frac{k+1}{2}\right\rceil.\]
Then there exist an $l\in\{2,3,\dots,\left\lfloor(n-2)/2\right\rfloor\}$ and a maximal antichain $\mathcal A\subseteq\binom{[n]}{l}\cup\binom{[n]}{l+1}$ with $\abs{\mathcal A}=m$.
\end{theorem}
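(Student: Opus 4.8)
The plan is to reduce the existence of a flat maximal antichain on levels $l$ and $l+1$ to the choice of a single family $\mathcal{F}\subseteq\binom{[n]}{l}$. Given such $\mathcal{F}$, the antichain property together with maximality forces the upper part to be $\mathcal{G}=\{G\in\binom{[n]}{l+1}:G\not\supseteq F\text{ for all }F\in\mathcal{F}\}$, so that $\mathcal{A}=\mathcal{F}\cup\mathcal{G}$ automatically satisfies the antichain property and covers every set of size $\geq l+1$. Writing $\nabla\mathcal{F}$ for the upper shadow (the $(l+1)$-sets containing a member of $\mathcal{F}$), the size is
\[
  \abs{\mathcal{A}}=\binom{n}{l+1}+\abs{\mathcal{F}}-\abs{\nabla\mathcal{F}},
\]
and the only remaining requirement is maximality from below: every $F'\in\binom{[n]}{l}\setminus\mathcal{F}$ must be contained in some member of $\mathcal{G}$, equivalently some $x\notin F'$ can be adjoined to $F'$ so that no single-element swap $(F'\setminus\{y\})\cup\{x\}$ lies in $\mathcal{F}$. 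First I would prove this characterization carefully, since it converts the whole problem into the question of which values $\abs{\mathcal{F}}-\abs{\nabla\mathcal{F}}$ are attainable by such saturated families.

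Next, for each fixed $l$ I would exhibit flat maximal antichains realizing a contiguous interval of sizes $I_l=[L_l,R_l]$. The backbone is the nested construction $\mathcal{F}_j=\{F:F\not\subseteq[j]\}$, $\mathcal{G}_j=\binom{[j]}{l+1}$, which is easily checked to be a maximal antichain of size $s(j)=\binom{n}{l}-\binom{j}{l}+\binom{j}{l+1}$; as $j$ runs over $\{l+1,\dots,n\}$ these coarse sizes sweep from $\binom{n}{l}$ down through a valley and back up to $\binom{n}{l+1}$. To fill the gaps $\abs{s(j)-s(j+1)}$ I would superimpose fine adjustments: appending to $\mathcal{G}_j$ the $(l+1)$-sets $F^*\cup\{j+1\}$ with $F^*$ ranging over an initial colex segment of $\binom{[j]}{l}$, which deletes precisely the $(l-1)$-shadow of that segment from $\mathcal{F}_j$ and changes the size in unit steps once the shadow ceases to grow. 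The lower endpoint $L_l$ is governed by the minimum flat maximal antichain, a covering/Tur\'an-type extremal problem: for $l=2$ it amounts to maximizing $e(H)-t(H)$ over graphs $H$ in which every edge lies in a triangle, where $H$ is the complement of $\mathcal{F}$ and $t(H)$ counts triangles; the optimum $\lfloor(n+1)^2/8\rfloor$ is attained by $K_{\lfloor n/2\rfloor,\lceil n/2\rceil}$ together with a near-perfect matching inside one side, which is exactly the theorem's lower bound.

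Finally I would assemble the intervals. I would show $L_{l+1}\leq R_l$ so that consecutive intervals overlap and $\bigcup_l I_l$ is itself an interval, and then pin down its endpoints: the pair $(2,3)$ contributes the minimum $\binom{n}{2}-\lfloor(n+1)^2/8\rfloor$, while the topmost admissible pair (levels $k-1,k$ or $k-2,k-1$ according to the parity of $n$) reaches up to $\binom{n}{k}-k\lceil(k+1)/2\rceil$, the largest size below the forbidden gap just under $\binom{n}{k}$. Matching these endpoints is where the floor and ceiling expressions enter, and it requires a careful but routine manipulation of the binomial identities for $s(j)$ and $\nabla\mathcal{F}$.

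I expect the main obstacle to be establishing contiguity within each $I_l$. Single-set modifications destroy maximality—removing one $l$-set leaves it uncovered, while adding one $(l+1)$-set violates the antichain property—so every size change must be realized by a coordinated bundle of moves, and proving that these bundles can be chained in steps of exactly one while preserving the maximality-from-below condition is the delicate heart of the argument. A secondary difficulty is verifying the extremal value $L_l$ and the overlap inequalities uniformly in $l$, rather than only at the two ends of the range.
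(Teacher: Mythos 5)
Your framework---reducing to a single family plus a saturation condition, a squashed/colex backbone, and chaining intervals over $l$---matches the paper's architecture, but both of the steps you flag as ``delicate'' or ``routine'' are genuine gaps, and the first rests on a false claim. Your fine adjustments are exactly the squashed flat antichains: $\binom{[j]}{l+1}$ together with $\{F^*\cup\{j+1\}:F^*\in\mathcal S\}$ for an initial colex segment $\mathcal S\subseteq\binom{[j]}{l}$ is just an initial colex segment of $\binom{[n]}{l+1}$. Adding the next $(l+1)$-set changes the antichain size by $1-\delta$, where $\delta\in\{0,1,\dots,l\}$ is the shadow increment, so the achieved sizes do \emph{not} move in unit steps: consecutive squashed sizes differ by as much as $l$ (the paper states this explicitly), leaving gaps that the colex mechanism cannot reach no matter how you chain it; moreover intermediate segments can fail maximality (one needs $a_r\geq r\geq 2$ in the cascade representation of $\abs{\mathcal S}$). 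Filling these gaps is the bulk of the paper: it constructs bespoke non-squashed families $\mathcal F_i$ with $\abs{\Delta\mathcal F_i}$ decremented one unit at a time (via ``starter'' graphs whose complements are forests with a prescribed number $i$ of adjacent edge pairs, \Cref{lem:j^*_bound,lem:move_sequence,lem:top_row}) for the base case, and propagates them to all levels by induction on $n$ and $l$ through the three recursions of \Cref{lem:recursions}. Nothing in your sketch supplies this machinery.

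Second, the overlap $L_{l+1}\leq R_l$ fails for your construction once $n$ is large relative to $l$. The squashed valley bottoms out at $\binom{n}{l}-C_l$, where $C_l=\sum_{i=1}^l\frac{1}{i+1}\binom{2i}{i}$ is independent of $n$, whereas the top of the previous interval is $\binom{n}{l}-(n-l)\left\lceil\frac{n-l+1}{2}\right\rceil$, a deficit of order $n^2$. So there is an unbridged stretch of length $\Theta(n^2)$ below the valley, and the union of your intervals is not an interval. The paper closes it with a second, independent construction (\Cref{lem:coding_construction,lem:general_partition}): pairwise shadow-disjoint families $\mathcal F\subseteq\binom{[t]}{l}$ with attached sets $X(A)\subseteq\{t+1,\dots,n\}$, generalizing the $l=2$ minimum configuration you describe, which pushes the left end of $S(n,l)$ down to about $\binom{n}{l}-\left(1+(l-1)(n-k)\right)\left(\left\lceil\frac{1}{k}\binom{k}{l}\right\rceil-l+2\right)$. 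You invoke the extremal bottom construction only for $l=2$, where the theorem's lower endpoint lives; for $l\geq 3$ your proposal produces nothing in the needed range (and even then sporadic small cases such as $(l,n)\in\{(3,9),(3,10),(4,12)\}$ require ad hoc treatment, cf.\ \Cref{lem:overlap,lem:small_l}).
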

The lower bound on $m$ in \Cref{thm:main_result} is best possible: In~\cite{Gruettmueller2009} it was shown that $\binom{n}{2}-\left\lfloor\frac{(n+1)^2}{8}\right\rfloor$ is the minimum size of a maximal antichain consisting of $2$-sets and $3$-sets, and with singletons and $2$-sets we only get the sizes $\binom{t}{2}+n-t$ for $t\in\{0,1,\dots,n\}$. Therefore, at least one of the two numbers $\binom{n}{2}-\left\lfloor\frac{(n+1)^2}{8}\right\rfloor-1$ and $\binom{n}{2}-\left\lfloor\frac{(n+1)^2}{8}\right\rfloor-2$ is not the size of a flat maximal antichain.\footnote{Usually $\binom{n}{2}-\left\lfloor\frac{(n+1)^2}{8}\right\rfloor-1$ is the largest missing value, but if $s=4(n-1)(n-2)-8\left\lfloor\frac{(n+1)^2}{8}\right\rfloor-7$ turns out to be a square then $\binom{n}{2}-\left\lfloor\frac{(n+1)^2}{8}\right\rfloor-1=\binom{t}{2}+n-t=$ with $t=\frac32+\frac12\sqrt{s}$ is the size of a flat maximal antichain consisting of 1-sets and 2-sets. For $n\leq \num{1000000}$ this happens only for the following 23 values of $n$: 5, 6, 7, 8, 9, 33, 48, 63, 78, 93, 429, 638, 847, 1056, 1265, 5945, 8856, \num{11767}, \num{14678}, \num{17589}, \num{82773}, \num{123318}, \num{163863}, \num{204408}, \num{244953}.}  Combining \Cref{thm:main_result,thm:MAC_sizes}, we obtain the following characterization of the sizes of flat maximal antichains.
\begin{corollary}\label{cor:mfac_sizes}
  Let $n$ and $m$ be integers with $1\leq m\leq\binom{n}{k}$ where $k=\lceil n/2\rceil$. There exists a flat maximal antichain of size $m$ in $B_n$ if and only if at least one of the following three conditions is satisfied:
  \begin{enumerate}[(i)]
  \item $\binom{n}{2}-\left\lfloor\frac{(n+1)^2}{8}\right\rfloor\leq m\leq\binom{n}{k}-k\left\lceil\frac{k+1}{2}\right\rceil$
  \item $m=\binom{n}{k}-tl+\binom{a}{2}+\binom{b}{2}+c$ for some integers $l\in\{k,n-k\}$, $t\in\{0,\dots,k\}$, $a\geq b\geq c\geq 0$, $a+b\leq t$.
  \item $m=\binom{t}{2}+n-t$ for some $t\in\{0,1,\dots,n\}$.
  \end{enumerate}
\end{corollary}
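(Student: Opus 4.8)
The plan is to prove both implications by assembling \Cref{thm:main_result}, \Cref{thm:MAC_sizes}, and the minimum-size facts recorded after \Cref{thm:main_result}. Throughout write $M=\binom{n}{2}-\lfloor(n+1)^2/8\rfloor$ and $B=\binom{n}{k}-k\lceil(k+1)/2\rceil$, so that condition~(i) is exactly $M\leq m\leq B$.

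For the ``if'' direction I treat the three conditions separately. If (i) holds, \Cref{thm:main_result} directly produces a flat maximal antichain $\mathcal A\subseteq\binom{[n]}{l}\cup\binom{[n]}{l+1}$ of size $m$. If (ii) holds, then $m\in S(n)$ by \Cref{thm:MAC_sizes}; moreover $tl\leq k^2$ because $t\leq k$ and $l\in\{k,n-k\}$ with $n-k\leq k$, while $\binom a2+\binom b2+c\geq0$, so $m\geq\binom nk-k^2$, and the ``moreover'' clause of \Cref{thm:MAC_sizes} yields a flat antichain in two consecutive middle levels. If (iii) holds, I exhibit the antichain explicitly: for $t\neq1$ fix a $t$-set $T$ and take $\mathcal A_t=\binom T2\cup\{\{x\}:x\in[n]\setminus T\}$, which is an antichain of $2$-sets and singletons; it is maximal since every subset meeting $[n]\setminus T$ contains one of the chosen singletons, while inside $T$ every pair is already present and every larger set contains one. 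Its size is $\binom t2+(n-t)$. The value $t=1$ needs no separate construction, since $\binom12+(n-1)=\binom22+(n-2)$ is already realized by $\mathcal A_2$.

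For the ``only if'' direction, let $\mathcal A$ be a flat maximal antichain of size $m$ with all sets in levels $\{l,l+1\}$. Since $\mathcal A$ is a maximal antichain, $m\in S(n)$, so \Cref{thm:MAC_sizes} gives either that $m$ has the shape in~(ii) (and we are done), or that $m\leq B$; assume the latter. Complementation $A\mapsto[n]\setminus A$ preserves maximal antichains, sizes, and sends level $j$ to level $n-j$, so I may reduce the level-pair analysis to the lower half. If $l\leq1$, so $\mathcal A$ lives in levels $\{1,2\}$ (the degenerate case $\emptyset\in\mathcal A$ forces $\mathcal A=\{\emptyset\}$), a short structural argument forces $\mathcal A=\binom V2\cup\{\{x\}:x\notin V\}$ for $V=[n]\setminus\{\text{singletons of }\mathcal A\}$ with $\abs V\neq1$: maximality makes every pair inside $V$ belong to $\mathcal A$ and covers every vertex of $V$, so $m=\binom{\abs V}2+(n-\abs V)$ has the form~(iii); the symmetric levels $\{n-2,n-1\}$ give the same sizes. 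This leaves the middle range $2\leq l\leq n-3$, where the goal is to show $m\geq M$, which combined with $m\leq B$ is exactly~(i).

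The heart of the proof is therefore the lower bound that every maximal antichain confined to levels $\{l,l+1\}$ with $2\leq l\leq n-3$ has size at least $M$. For $l=2$ (and, by complementation, $l=n-3$) this is precisely the result of~\cite{Gruettmueller2009}. For $3\leq l\leq n-4$ I would use the maximality covering inequalities: every $l$-set not in $\mathcal A$ lies in an $(l+1)$-set of $\mathcal A$, giving $\abs{\mathcal A}\geq\binom nl/(l+1)$, and dually $\abs{\mathcal A}\geq\binom n{l+1}/(n-l)$; the stronger bound is minimized at the ends $l=3,\,n-4$, where it equals $\binom n3/4$, and $\binom n3/4\geq M$ for all sufficiently large $n$. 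The main obstacle is the tail of small $n$ (for instance $n=9,\,l=3$), where this crude covering bound falls just short of $M$: there one needs either the exact minimum-size formula for flat maximal antichains in two consecutive levels, a finite verification, or a compression argument showing that the minimum size in levels $\{l,l+1\}$ is non-decreasing as $l$ grows toward $n/2$, so that it is everywhere controlled by the $l=2$ value $M$. Establishing this monotonicity cleanly is where I expect the real work to lie; the rest is bookkeeping over the boundary level pairs.
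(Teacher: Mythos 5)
Your architecture coincides with the paper's, which offers no separate proof of the corollary: it is derived in one line by combining \Cref{thm:main_result} (condition (i)), the ``moreover'' clause of \Cref{thm:MAC_sizes} (condition (ii) --- your observation that (ii) forces $m\geq\binom{n}{k}-tl\geq\binom{n}{k}-k^2$ because $n-k\leq k$, and hence triggers that clause, is exactly the intended mechanism), and the two facts quoted after \Cref{thm:main_result}: the minimum $\binom{n}{2}-\lfloor(n+1)^2/8\rfloor$ for levels $\{2,3\}$ from \cite{Gruettmueller2009}, and the sizes $\binom{t}{2}+n-t$ for singletons and $2$-sets (your explicit $\mathcal A_t$ is the standard witness). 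Your complementation reduction and your $l\leq 1$ structural analysis are faithful, more explicit versions of what the paper leaves implicit.

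The step you flag as unresolved --- that a maximal antichain in $\binom{[n]}{l}\cup\binom{[n]}{l+1}$ with $3\leq l\leq n-4$ has size at least $M=\binom{n}{2}-\lfloor(n+1)^2/8\rfloor$ --- is indeed the only substantive content of the ``only if'' direction beyond \Cref{thm:MAC_sizes}, and the paper passes over it in silence; but it needs neither exact minima, nor finite verification, nor the monotonicity of $\min S(n,l)$ whose proof you anticipate to be the real work. You weakened your two covering inequalities by taking their maximum; keep both. With $a$ and $b$ the numbers of $l$-sets and $(l+1)$-sets, maximality gives $a+(l+1)b\geq\binom{n}{l}$ and $(n-l)a+b\geq\binom{n}{l+1}$. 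For $n\geq 10$ your single bound already suffices: the larger of $\binom{n}{l}/(l+1)$ and $\binom{n}{l+1}/(n-l)$ is minimized at $l=3$ (the ratio of consecutive values of $\binom{n}{l}/(l+1)$ is $(n-l)/(l+2)\geq 1$ for $l\leq(n-2)/2$, and complementation swaps the two bounds), and $\binom{n}{3}/4\geq M$ holds for all $n\geq 10$ ($30\geq 30$ at $n=10$, then cubic beats quadratic). For the residual cases simply add the two inequalities: for $n=7$, $l=3$ one gets $5(a+b)\geq 70$, so $a+b\geq 14>13=M$; for $n=8$, $l=3$ one gets $6(a+b)\geq 6a+5b\geq 126$, so $a+b\geq 21>18=M$; for $n=9$, $l=3$ one gets $7(a+b)\geq 7a+5b\geq 210$, so $a+b\geq 30>24=M$; the pairs $l=4$ (and, for $n=9$, $l=5$) follow in the same way or by complementation. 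With this patch your proof is complete, and in fact more careful than the paper's. One shared blemish deserves mention: $\{\emptyset\}$ and $\{[n]\}$ are flat maximal antichains of size $1$, and for $n\geq 6$ the value $m=1$ satisfies none of (i)--(iii); your parenthetical in the $l\leq 1$ case notices $\{\emptyset\}$ but your conclusion (iii) does not cover it, so, like the paper, you must implicitly exclude these two trivial antichains.
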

In particular, almost all sizes of maximal antichains are obtained by flat maximal antichains, the exceptions being the elements of the set $\left\{1,2,\dots,\binom{n}{2}-\left\lfloor\frac{(n+1)^2}{8}\right\rfloor-1\right\}\setminus\{\binom{t}{2}+n-t\,:\,t\in\{0,\dots,n\}\}$.

\subsection*{Related work}
In \cite{Gruettmueller2009} the problem of finding the smallest size of a maximal antichain in $\binom{n}{l}\cup\binom{n}{l+1}$ was raised and a complete answer was provided for $l=2$. A stability version of this result together with asymptotic bounds for general $l$ have been presented in \cite{Gerbner2012}. The more general question of minimizing the size of an antichain on two arbitrary levels, not necessarily consecutive, has been studied in \cite{Kalinowski2013}. Recently, there has been an increasing interest in poset saturation problems, asking for the possible sizes of posets which are maximal with respect to the property of not containing a fixed poset (see \cite{Keszegh_2021} for an overview). 

Another related line of research is the investigation of the domination number of the bipartite graph with vertex set $\binom{[n]}{l}\cup\binom{[n]}{l+1}$ (and more generally $\binom{[n]}{l}\cup\binom{[n]}{l+t}$ for $t\in\{1,\dots,n-l\}$) where adjacency is given by set inclusion \cite{Badakhshian_2019,Balogh2021}. A dominating set in a graph is a set $W$ of vertices with the property that every vertex outside $W$ has a neighbor in $W$, and the \emph{domination number} of a graph is the minimum size of a dominating set. Clearly, a maximal antichain $\mathcal A$ is a minimal dominating set which is also an independent set (that is, there are no edges between elements of $\mathcal A$). The problem of finding the minimum size of a flat maximal antichain can be viewed as asking for the \emph{independent domination number} of the graph: the minimum size of an independent dominating set. For $l=2$, the domination number is asymptotically equal to the independent domination number~\cite{Balogh2021}, and it is conjectured that the exact values are equal. 

\section{High level overview}\label{sec:high_level}
Before going into the technical details we provide a sketch of the main ideas behind the construction. This will be informal and without proofs. It turns out that this informal description is more straightforward in the upper half of $B_n$, that is, using sets of size at least $\lfloor n/2\rfloor$. The statement of \Cref{thm:main_result} and the formal proofs below take place in the lower half. Clearly, these two settings are equivalent via the observation that the family obtained by taking complements of the members of a maximal antichain is again a maximal antichain.  

For a set $A\in\binom{[n]}{l+1}$, its \emph{shadow} $\Delta A$ is the collection of all
$l$-subsets of $A$, that is, $\Delta A=\{A\setminus\{i\}\,:\,i\in A\}$. For a family $\mathcal
F\subseteq\binom{[n]}{l+1}$, the shadow $\Delta\mathcal F$ is union of the shadows of the members of
$\mathcal F$, that is, $\Delta\mathcal
F=\bigcup_{A\in\mathcal F}\Delta A=\{X\subseteq[n]\,:\,\abs{X}=l\text{ and }X\subseteq A\text{ for some }A\in\mathcal F\}$. This
concept is relevant because a flat maximal antichain $\mathcal
A\subseteq\binom{[n]}{l+1}\cup\binom{[n]}{l}$ is determined by its collection of $(l+1)$-sets:
$\mathcal A=\mathcal F\cup\binom{[n]}{l}\setminus\Delta\mathcal F$. Such an antichain is maximal if
and only if $\Delta A\not\subseteq\Delta\mathcal F$ for every $A\in\binom{[n]}{l+1}\setminus\mathcal
F$. 
Similarly, the \emph{shade} (or \emph{upper shadow}) $\nabla\mathcal G$ of $\mathcal G\subseteq\binom{[n]}{l}$ (with respect to the fixed ground set $[n]$) is the family of all $(l+1)$-subsets of $[n]$ which are a superset of some member of $\mathcal G$, i.e., 
  $\nabla\mathcal G = \{Y\,:\,Y\subseteq[n],\,\abs{Y}=l+1,\,B\subseteq Y  \text{ for some }B\in\mathcal G\}$.
  In verifying maximality of certain flat antichains constructed below it is convenient to notice that an antichain $\mathcal A=\mathcal A_1\cup\mathcal A_2$ with $\mathcal A_1\subseteq\binom{[n]}{l}$ and $\mathcal A_2\subseteq\binom{[n]}{l+1}$, is maximal if and only if $\mathcal A_1=\binom{[n]}{l}\setminus\Delta\mathcal A_2$ and $\mathcal A_2 = \binom{[n]}{l+1}\setminus\nabla\mathcal A_1$.

Given a positive integer $t$, the Kruskal-Katona Theorem \cite{Kruskal1963,Katona1968} says that among the families $\mathcal F\subseteq\binom{[n]}{l+1}$ with $\abs{\mathcal{F}}=t$, the shadow is minimized by taking the first $t$ sets in \emph{squashed order} (also known as \emph{colexicographic order}), where $A$ precedes $B$ in this order if $\max((A\cup B)\setminus(A\cap B))\in B$. The theorem also specifies the size of the shadow: Writing $t$ in the cascade representation $t=\binom{a_{l+1}}{l+1}+\binom{a_l}{l}+\dots+\binom{a_r}{r}$ with $a_{l+1}>a_l>\dots>a_r\geq r\geq 1$, the shadow of the first $t$ sets of size $l+1$ in squashed order has size $\binom{a_{l+1}}{l}+\binom{a_l}{l-1}+\dots+\binom{a_r}{r-1}$. 

We call a flat antichain $\mathcal A$ \emph{squashed} if it has the form $\mathcal A=\mathcal F\cup\binom{[n]}{l}\setminus\Delta\mathcal F$, where $\mathcal F$ is the collection of the first $\abs{\mathcal F}$ elements of $\binom{[n]}{l+1}$ in squashed order. Note that such a squashed flat antichain is a maximal antichain if and only if $r\geq 2$ (see \cite[Proposition 3.1]{Griggs2021} for a formal proof). The sizes of maximal squashed flat antichains have been studied in \cite{Griggs2021}, and they form the backbone of our construction.

The main idea in the proof of \Cref{thm:main_result} is to begin with the families of squashed flat maximal antichains on levels $l+1$ and $l$, and consider the gaps in the corresponding collection of sizes. The ultimate aim is to show that each gap value is attained by a flat maximal antichain on levels $i+1$ and $i$ for some $i$.
The difference of any two consecutive sizes of squashed flat maximal antichains on levels $l+1$ and
$l$ is at most $l$, and the idea is to fill the gaps between them by modifying the last few sets in
an initial segment $\mathcal F$ of $\binom{[n]}{l+1}$. More precisely, if $\mathcal A=\mathcal
F\cup\binom{[n]}{l}\setminus\Delta\mathcal F$ is a squashed flat maximal antichain of size $m$, and
$\delta\leq l$, we change the last few sets of $\mathcal F$ to obtain a family $\mathcal F'$ such
that $\mathcal A'=\mathcal F'\cup\binom{[n]}{l}\setminus\Delta\mathcal F'$ is a maximal antichain of
size $m-\delta$. We show that this is possible by an induction argument: The base case is $l=n-3$, and for $l\leq n-4$ recursive constructions are used.   

\subsection*{The base case \texorpdfstring{$l=n-3$}{}} 
We start by constructing, for every  $i\in\{0,1,\dots,\lambda\}$, a family $\mathcal F_i\subseteq\binom{[n]}{l+1}$ with $\abs{\mathcal F_i}=\left\lceil\frac{l+1}{2}\right\rceil$ and $\abs{\Delta\mathcal F_i}=(l+1)\left\lceil\frac{l+1}{2}\right\rceil-i$ with the property that $n\in A$ for every $A$ in every $\mathcal F_i$. Here $\lambda$ is a parameter depending on $n$: $\lambda=f(n-2)$ in \Cref{prop:large_flat} below, in particular $\lambda=l$ for $n\geq 13$. For $t=0,1,\dots,l$, we set $\mathcal F_{i,t}=\mathcal F_i\cup\{A_1,\dots,A_t\}$ where $A_j$ is the $j$-th $(l+1)$-set in squashed order (note that $n\not\in A_j$ for all $j$), and $\mathcal A_{i,t}=\mathcal F_{i,t}\cup\binom{[n]}{l}\setminus\Delta\mathcal F_{i,t}$. The families $\mathcal F_i$ are constructed in such a way that, for all $i$ and $t$, adding $A_t$ to $\mathcal A_{i,t-1}$ increases the shadow by $l+1-t$ for $t\leq\lceil(n-2)/2\rceil$ and by $l+2-t$ if $t>\lceil(n-2)/2\rceil$. As a consequence, the sizes of the maximal antichains $\mathcal A_{i,t}$ cover a certain interval ending at $\abs{\mathcal A_{\lambda,0}}=\binom{n}{3}-l\left\lceil\frac{l+1}{2}\right\rceil+\lambda$. By an induction argument we also have an interval of sizes starting at $\binom{n}{l+1}-5$ and ending above the starting point of the previously mentioned interval. Combining these two intervals, we have an interval of sizes from $\binom{n}{2}-5$ to $\abs{\mathcal A_{\lambda,0}}$ (see \Cref{lem:base_case}). For example, for $(n,l)=(13,10)$, the families $\mathcal F_0,\dots,\mathcal F_{10}$ are illustrated in \Cref{fig:graphs_13_11_a,fig:graphs_13_11_b,fig:graphs_13_11_c,fig:graphs_13_11_d}. Here we use that we can think of a set of size $n-2$ as the complement of an edge of a graph, and a family of sets of size $n-2$ can be represented by a graph. In this sense, the members of $\mathcal F_i$ are the complements of the edges of the corresponding graph, for instance
\[\mathcal F_0=\left\{[13]\setminus\{1,12\},\,[13]\setminus\{2,11\},\,[13]\setminus\{3,10\},\,[13]\setminus\{4,9\},\,[13]\setminus\{5,8\},\,[13]\setminus\{6,7\}\right\}.\]
\begin{figure}[htb]
  \begin{minipage}{.32\linewidth}
    \centering
    \begin{tikzpicture}
      \foreach \a in {1,2,...,13} {
        \node[circle,fill=black,outer sep=1pt,inner sep=1pt,label=\a*360/13:$\a$] (\a) at (\a*360/13:1.5cm) {};
      }
      \draw[thick] (12) -- (1);
      \draw[thick] (11) -- (2);
      \draw[thick] (10) -- (3);
      \draw[thick] (9) -- (4);
      \draw[thick] (8) -- (5);
      \draw[thick] (7) -- (6);
    \end{tikzpicture}
  \end{minipage}\hfill
  \begin{minipage}{.32\linewidth}
    \centering
    \begin{tikzpicture}
      \foreach \a in {1,2,...,13} {
        \node[circle,fill=black,outer sep=1pt,inner sep=1pt,label=\a*360/13:$\a$] (\a) at (\a*360/13:1.5cm) {};
      }
      \draw[thick] (12) -- (11);
      \draw[thick] (11) -- (2);
      \draw[thick] (10) -- (3);
      \draw[thick] (9) -- (4);
      \draw[thick] (8) -- (5);
      \draw[thick] (7) -- (6);
    \end{tikzpicture}
  \end{minipage}\hfill
  \begin{minipage}{.32\linewidth}
    \centering
    \begin{tikzpicture}
      \foreach \a in {1,2,...,13} {
        \node[circle,fill=black,outer sep=1pt,inner sep=1pt,label=\a*360/13:$\a$] (\a) at (\a*360/13:1.5cm) {};
      }
      \draw[thick] (12) -- (11);
      \draw[thick] (11) -- (10);
      \draw[thick] (10) -- (3);
      \draw[thick] (9) -- (4);
      \draw[thick] (8) -- (5);
      \draw[thick] (7) -- (6);
    \end{tikzpicture}
  \end{minipage}
  \caption{The edges of the graphs are the complements of the members of $\mathcal F_0$, $\mathcal F_1$, $\mathcal F_2$.}\label{fig:graphs_13_11_a}
\end{figure}
\begin{figure}[htb]
  \begin{minipage}{.32\linewidth}
    \centering
    \begin{tikzpicture}
      \foreach \a in {1,2,...,13} {
        \node[circle,fill=black,outer sep=1pt,inner sep=1pt,label=\a*360/13:$\a$] (\a) at (\a*360/13:1.5cm) {};
      }
      \draw[thick] (12) -- (10);
      \draw[thick] (11) -- (10);
      \draw[thick] (10) -- (3);
      \draw[thick] (9) -- (4);
      \draw[thick] (8) -- (5);
      \draw[thick] (7) -- (6);
    \end{tikzpicture}
  \end{minipage}\hfill
  \begin{minipage}{.32\linewidth}
    \centering
    \begin{tikzpicture}
      \foreach \a in {1,2,...,13} {
        \node[circle,fill=black,outer sep=1pt,inner sep=1pt,label=\a*360/13:$\a$] (\a) at (\a*360/13:1.5cm) {};
      }
      \draw[thick] (12) -- (10);
      \draw[thick] (11) -- (10);
      \draw[thick] (10) -- (9);
      \draw[thick] (9) -- (4);
      \draw[thick] (8) -- (5);
      \draw[thick] (7) -- (6);
    \end{tikzpicture}
  \end{minipage}\hfill
  \begin{minipage}{.32\linewidth}
    \centering
    \begin{tikzpicture}
      \foreach \a in {1,2,...,13} {
        \node[circle,fill=black,outer sep=1pt,inner sep=1pt,label=\a*360/13:$\a$] (\a) at (\a*360/13:1.5cm) {};
      }
      \draw[thick] (12) -- (10);
      \draw[thick] (11) -- (10);
      \draw[thick] (10) -- (9);
      \draw[thick] (9) -- (8);
      \draw[thick] (8) -- (5);
      \draw[thick] (7) -- (6);
    \end{tikzpicture}
  \end{minipage}
  \caption{The edges of the graphs are the complements of the members of $\mathcal F_3$, $\mathcal F_4$, $\mathcal F_5$.}\label{fig:graphs_13_11_b}
\end{figure}
\begin{figure}[htb]
  \begin{minipage}{.32\linewidth}
    \centering
    \begin{tikzpicture}
      \foreach \a in {1,2,...,13} {
        \node[circle,fill=black,outer sep=1pt,inner sep=1pt,label=\a*360/13:$\a$] (\a) at (\a*360/13:1.5cm) {};
      }
      \draw[thick] (12) -- (9);
      \draw[thick] (11) -- (9);
      \draw[thick] (10) -- (9);
      \draw[thick] (9) -- (4);
      \draw[thick] (8) -- (5);
      \draw[thick] (7) -- (6);
    \end{tikzpicture}
  \end{minipage}\hfill
  \begin{minipage}{.32\linewidth}
    \centering
    \begin{tikzpicture}
      \foreach \a in {1,2,...,13} {
        \node[circle,fill=black,outer sep=1pt,inner sep=1pt,label=\a*360/13:$\a$] (\a) at (\a*360/13:1.5cm) {};
      }
      \draw[thick] (12) -- (9);
      \draw[thick] (11) -- (9);
      \draw[thick] (10) -- (9);
      \draw[thick] (9) -- (8);
      \draw[thick] (8) -- (5);
      \draw[thick] (7) -- (6);
    \end{tikzpicture}
  \end{minipage}\hfill
  \begin{minipage}{.32\linewidth}
    \centering
    \begin{tikzpicture}
      \foreach \a in {1,2,...,13} {
        \node[circle,fill=black,outer sep=1pt,inner sep=1pt,label=\a*360/13:$\a$] (\a) at (\a*360/13:1.5cm) {};
      }
      \draw[thick] (12) -- (9);
      \draw[thick] (11) -- (9);
      \draw[thick] (10) -- (9);
      \draw[thick] (9) -- (8);
      \draw[thick] (8) -- (7);
      \draw[thick] (7) -- (6);
    \end{tikzpicture}
  \end{minipage}
  \caption{The edges of the graphs are the complements of the members of $\mathcal F_6$, $\mathcal F_7$, $\mathcal F_8$.}\label{fig:graphs_13_11_c}
\end{figure}
\begin{figure}[htb]
  \begin{minipage}{.49\linewidth}
    \centering
    \begin{tikzpicture}
      \foreach \a in {1,2,...,13} {
        \node[circle,fill=black,outer sep=1pt,inner sep=1pt,label=\a*360/13:$\a$] (\a) at (\a*360/13:1.5cm) {};
      }
      \draw[thick] (12) -- (9);
      \draw[thick] (11) -- (9);
      \draw[thick] (10) -- (9);
      \draw[thick] (9) -- (7);
      \draw[thick] (8) -- (7);
      \draw[thick] (7) -- (6);
    \end{tikzpicture}
  \end{minipage}\hfill
  \begin{minipage}{.49\linewidth}
    \centering
    \begin{tikzpicture}
      \foreach \a in {1,2,...,13} {
        \node[circle,fill=black,outer sep=1pt,inner sep=1pt,label=\a*360/13:$\a$] (\a) at (\a*360/13:1.5cm) {};
      }
      \draw[thick] (12) -- (8);
      \draw[thick] (11) -- (8);
      \draw[thick] (10) -- (8);
      \draw[thick] (9) -- (8);
      \draw[thick] (8) -- (5);
      \draw[thick] (7) -- (6);
    \end{tikzpicture}
  \end{minipage}
  \caption{The edges of the graphs are the complements of the members of $\mathcal F_9$ and $\mathcal F_{10}$.}\label{fig:graphs_13_11_d}
\end{figure}

Adding the first $t$ $11$-sets in squashed order corresponds to adding the edges $\{13,12\},\,\{13,11\},\dots,\{13,13-t\}$ to these graphs, and as a result we obtain the following maximal antichains:
\begin{itemize}
    \item $\mathcal{A}_{i,0}$, $i=0,1,\dots,10$, each with $6$ $11$-sets, cover the sizes $\binom{13}{10}-60=226,\dots,\binom{13}{10}-50=236$,
    \item $\mathcal{A}_{i,1}$, $i=0,1,\dots,10$, each with $7$ $11$-sets, cover the sizes  $\binom{13}{10}-69=217,\dots,\binom{13}{10}-59=227$,
    \item $\mathcal{A}_{i,2}$, $i=0,1,\dots,10$, each with $8$ $11$-sets, cover the sizes $\binom{13}{10}-77=209,\dots,\binom{13}{10}-67=219$,
    \item $\mathcal{A}_{i,3}$, $i=0,1,\dots,10$, each with $9$ $11$-sets, cover the sizes $\binom{13}{10}-84=202,\dots,\binom{13}{10}-74=212$,
    \item $\mathcal{A}_{i,4}$, $i=0,1,\dots,10$, each with $10$ $11$-sets, cover the sizes $\binom{13}{10}-90=196,\dots,\binom{13}{10}-80=206$,
    \item $\mathcal{A}_{i,5}$, $i=0,1,\dots,10$, each with $11$ $11$-sets, cover the sizes $\binom{13}{10}-95=191,\dots,\binom{13}{10}-85=201$,
    \item $\mathcal{A}_{i,6}$, $i=0,1,\dots,10$, each with $12$ $11$-sets, cover the sizes $\binom{13}{10}-99=187,\dots,\binom{13}{10}-89=197$,
    \item $\mathcal{A}_{i,7}$, $i=0,1,\dots,10$, each with $13$ $11$-sets, cover the sizes $\binom{13}{10}-103=183,\dots,\binom{13}{10}-93=193$,
    \item $\mathcal{A}_{i,8}$, $i=0,1,\dots,10$, each with $14$ $11$-sets, cover the sizes $\binom{13}{10}-106=180,\dots,\binom{13}{10}-96=190$,
    \item $\mathcal{A}_{i,9}$, $i=0,1,\dots,10$, each with $15$ $11$-sets, cover the sizes $\binom{13}{10}-108=178,\dots,\binom{13}{10}-98=188$,
    \item $\mathcal{A}_{i,10}$, $i=0,1,\dots,10$, each with $16$ $11$-sets, cover the sizes $\binom{13}{10}-109=177,\dots,\binom{13}{10}-99=187$.
\end{itemize}
Overall we get maximal antichains of all sizes in the interval $[\binom{13}{10}-109,\binom{13}{10}-50]=[177,236]$ (illustrated by the plot on the left in \Cref{fig:example_13_11}). 
\begin{figure}[htb]
    \begin{minipage}{.49\textwidth}
    \includegraphics[width=\textwidth]{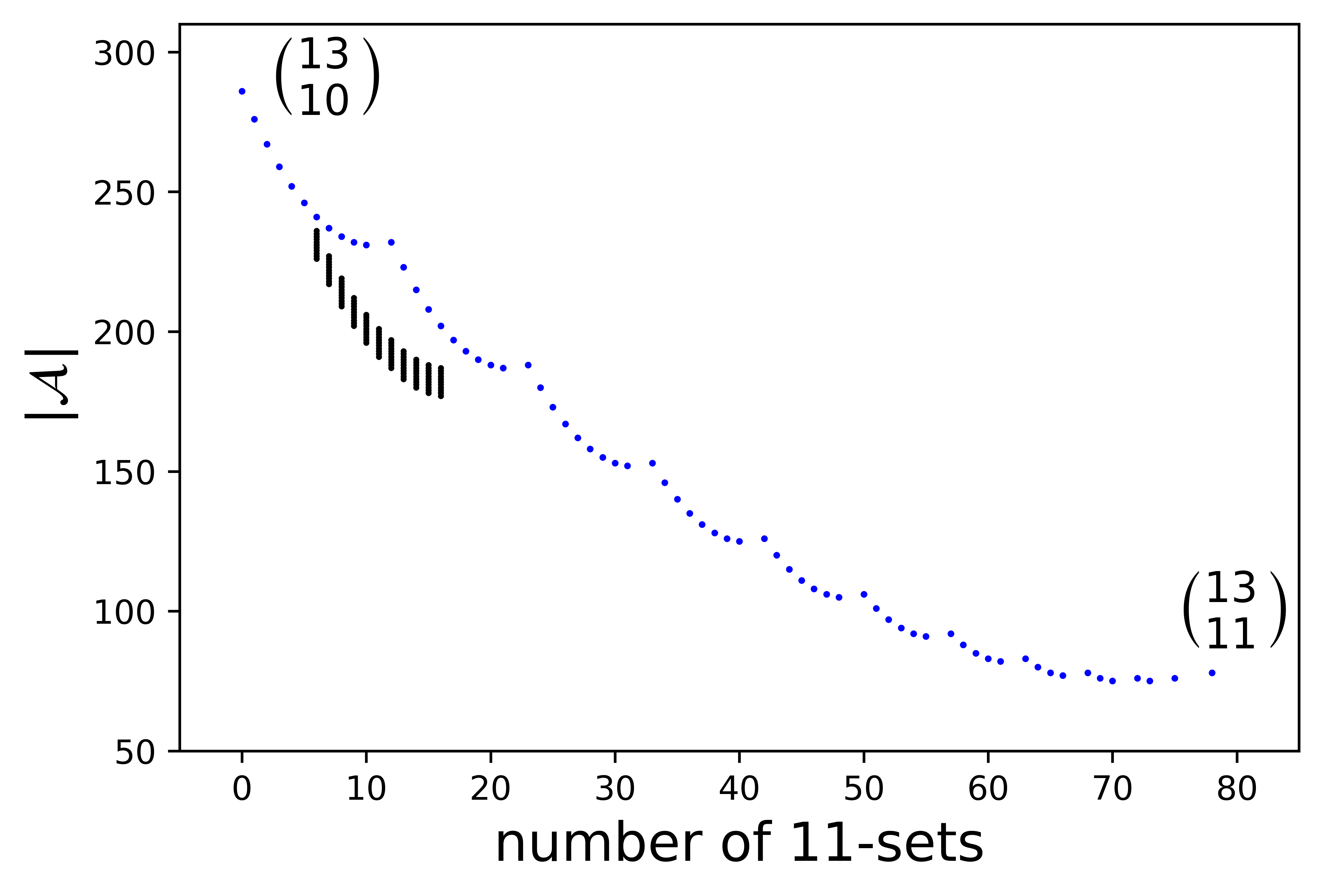}    
    \end{minipage}\hfill
    \begin{minipage}{.49\textwidth}
    \includegraphics[width=\textwidth]{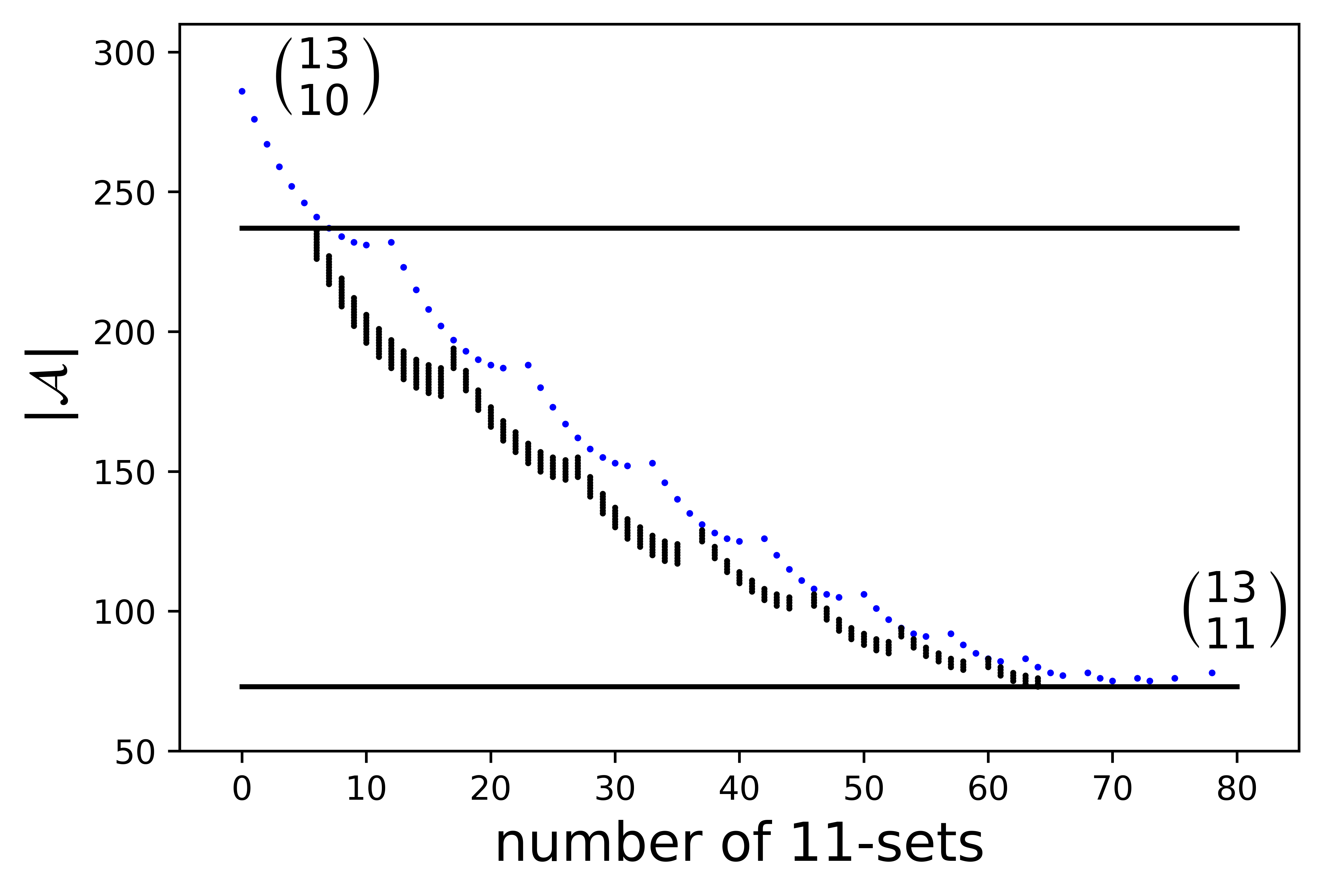}    
    \end{minipage}
    \caption{The dots correspond to the squashed maximal flat antichains $\mathcal{A}\subseteq\binom{[13]}{10}\cup\binom{[13]}{11}$. In the left plot, the segments correspond to the maximal antichains $\mathcal A_{i,t}$ ($i=0,1,\dots,10$, $t=0,1,\dots,10$). In the right plot, we have also added the maximal flat antichains obtained by recursion from $n=12$. The interval $[73,237]$ of sizes of maximal antichains contained in $\binom{[13]}{11}\cup\binom{[13]}{10}$ is indicated by horizontal lines.}\label{fig:example_13_11}
\end{figure}
By induction, we also have maximal antichains in $\binom{[12]}{9}\cup\binom{[12]}{10}$ of all sizes between $\binom{12}{2}-5=61$ and $\binom{12}{3}-38=182$. For each such antichain $\mathcal A$ we obtain a maximal antichain $\mathcal A'\subseteq\binom{[13]}{10}\cup\binom{[13]}{11}$ with $\abs{A'}=\abs{A}+12$ by (1) adding the element $13$ to every $A\in\mathcal A$ and (2) adding the 11-sets $[12]\setminus\{i\}$ for $i=1,2,\dots,12$ to the antichain (\Cref{lem:recursions}(i)). This yields the sizes from $\binom{13}{2}-5=73$ to $182+12>177$. Thus, we have maximal antichains of all sizes between $\binom{13}{2}-5$ and $\binom{13}{3}-50$ (see the right hand side plot in \Cref{fig:example_13_11}). 

\subsection*{The case \texorpdfstring{$l\leq n-4$}{}} 
For $t=0,1,\dots,\binom{n}{l+1}$, let $\mathcal I_t$ be the collection of the first $t$ elements of $\binom{[n]}{l+1}$ in squashed order, and define the function $f:\{0,1,\dots,\binom{n}{l+1}\}\to\mathbb{N}$ by $f(t)=t+\binom{n}{l}-\abs{\Delta\mathcal I_t}$. In other words, $f(t)$ is the size of the squashed full flat antichain with $t$ $(l+1)$-sets. For $t$ with cascade representation $t=\binom{a_{l+1}}{l+1}+\binom{a_l}{l}+\dots+\binom{a_r}{r}$ with $a_r\geq r+2$, we can use the base case to obtain antichains of all sizes between $f(t+\binom{r+1}{r-1})-5$ and a value just below $f(t)$. The base construction provides, for every $\delta\in\{\delta_0,\delta_0+1,\dots,\binom{r+1}{r-2}-\binom{r+1}{r-1}+5\}$ a family $\mathcal F'\subseteq\binom{[r+1]}{r-1}$ with  $\abs{\Delta\mathcal{F}'}-\abs{\mathcal{F}'}=\delta$, where $\delta_0<r^2/2$ (see \Cref{lem:base_case} for the precise value). The maximal antichain $\mathcal A=\mathcal F\cup\binom{[n]}{l}\setminus\Delta\mathcal{F}$ with $\mathcal F=\mathcal I_t\cup\left\{A\cup\{a_j+1\,:\,r\leq j\leq l+1\}\,:\,A\in\mathcal F'\right\}$ has size $\abs{\mathcal A}=f(t)-\delta$ since we have the partition $\Delta\mathcal F=\Delta\mathcal I_t\cup\left\{A\cup\{a_j+1\,:\,r\leq j\leq l+1\}\,:\,A\in\Delta\mathcal F'\right\}$, hence
\[\abs{\mathcal A}=\abs{\mathcal F}+\binom{n}{l}-\abs{\Delta\mathcal F}=t+\abs{\mathcal F'}+\binom{n}{l}-\abs{\Delta\mathcal I_t}-\abs{\Delta\mathcal F'}=f(t)-\delta.\]

For example, for $n=19$, $l=12$ and $t=\binom{18}{13}+\binom{15}{12}$, we use the base case for $\binom{[13]}{10}\cup\binom{[13]}{11}$ to obtain maximal antichains $\mathcal A\subseteq\binom{[19]}{12}\cup\binom{[19]}{13}$ of all sizes between $f(t+\binom{13}{11})-5$ and $f(t)-49$. The base case provides, for every $\delta\in\{49,\dots,\binom{13}{10}-\binom{13}{11}+5\}$ a family $\mathcal F'\subseteq\binom{[13]}{11}$ with $\abs{\Delta\mathcal F'}-\abs{\mathcal F'}=\delta$, and then the maximal antichain  $\mathcal A=\mathcal F\cup\binom{[19]}{12}\setminus\Delta\mathcal{F}$ for
\[\mathcal F=\binom{[18]}{13}\cup\left\{A\cup\{19\}\,:\,A\in\binom{[15]}{12}\right\}\cup\left\{A\cup\{16,19\}\,:\,A\in\mathcal F'\right\}\]
has size $\abs{\mathcal A}=f(t)-\delta$ (see \Cref{fig:example_19_13} for an illustration).
\begin{figure}[htb]
    \begin{minipage}{.49\textwidth}
    \includegraphics[width=\textwidth]{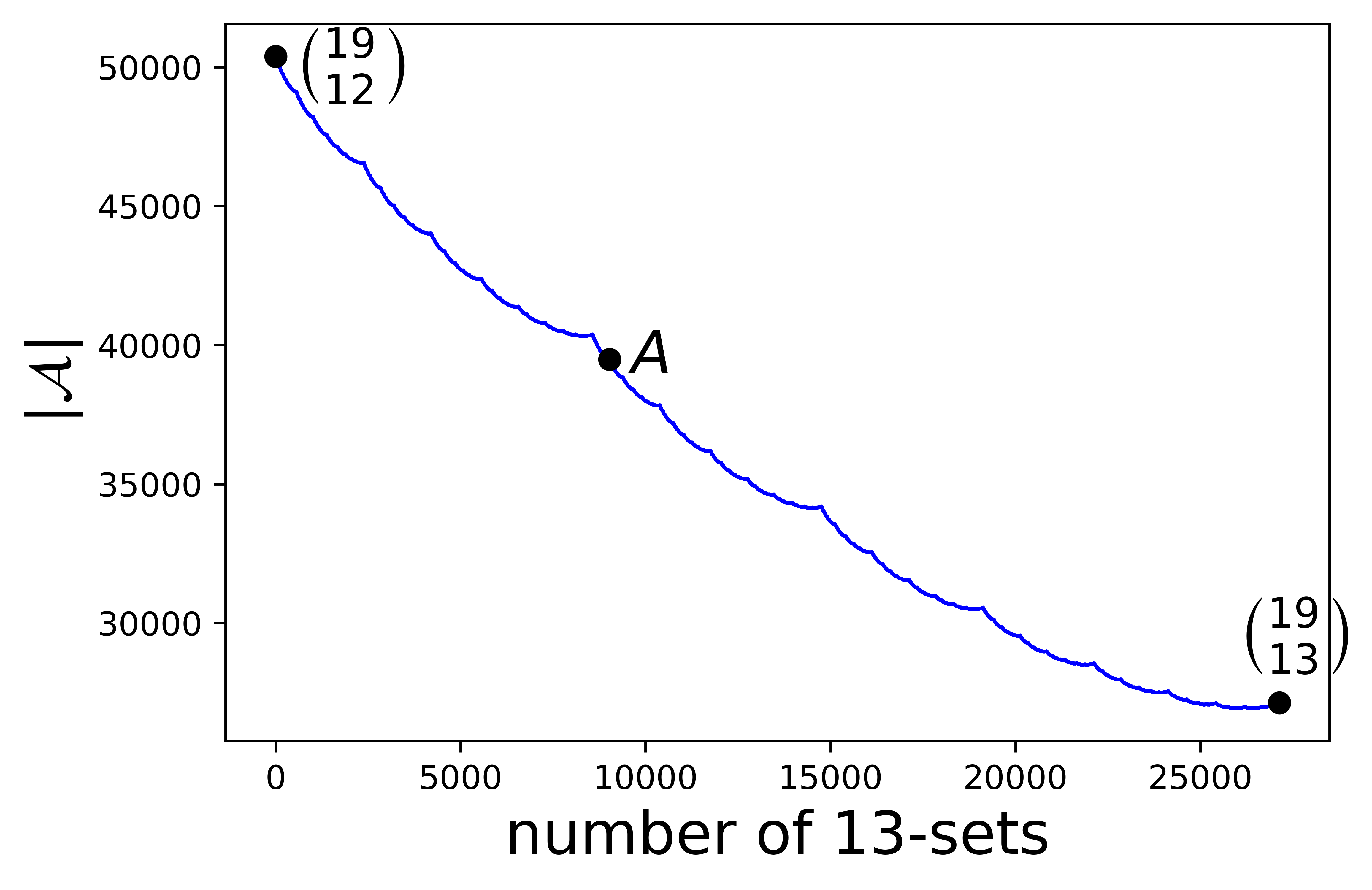}    
    \end{minipage}\hfill
    \begin{minipage}{.49\textwidth}
    \includegraphics[width=\textwidth]{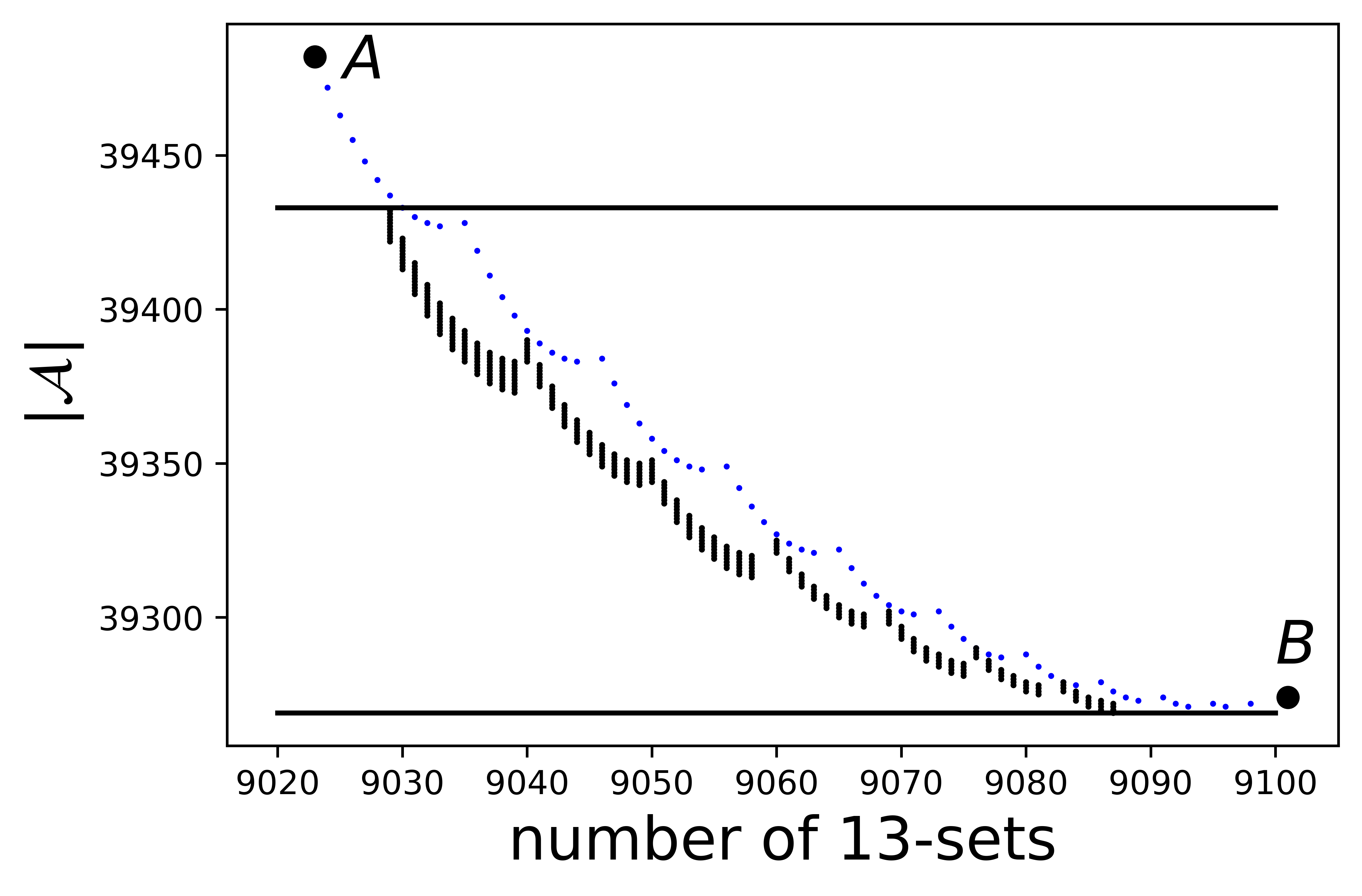}    
    \end{minipage}
    \caption{Constructing an interval of sizes of maximal antichains in $\binom{[19]}{12}\cup\binom{19}{13}$. The labels on the dots are the sizes of the corresponding maximal flat antichains. The part of the graph shown in the plot on the right has the same structure as the graph for the sizes of the maximal flat antichains in $\binom{[13]}{10}\cup\binom{[13]}{11}$ (see \Cref{fig:example_13_11}). The point $A$ corresponds to a maximal antichain of size $\binom{19}{12}+\binom{18}{13}-\binom{18}{12}+\binom{15}{12}-\binom{15}{11}=\num{39482}$, and the point $B$ represents size $A+\binom{13}{11}-\binom{13}{10}=\num{39274}$. The interval $[\num{39269},\num{39433}]$ of sizes of maximal antichains contained in $\binom{[19]}{13}\cup\binom{[19]}{12}$ is indicated by horizontal lines.}\label{fig:example_19_13}
\end{figure}

According to the description above, we obtain maximal antichains $\mathcal A\subseteq\binom{[n]}{l}\cup\binom{[n]}{l+1}$ whose sizes form many short intervals inside the large interval that goes roughly from $\binom{n}{l+1}$ to $\binom{n}{l}$, but there are many gaps between those intervals. In order to avoid these gaps, we proceed a bit more carefully when the base case is transferred: Instead of jumping directly from $l=n-3$ to an arbitrary $l$, we move recursively from level to level (using the recursions given in \Cref{lem:recursions}). Keeping careful track of the constructed intervals, it turns out that we can push the lower end of the interval in each level of the recursion a bit further down, and, as a consequence, the recursively constructed sizes form a single interval. This is the content of \Cref{prop:large_flat} which is proved in \Cref{sec:large_flat}. Finally, the gaps between these intervals for varying $l$ are closed in \Cref{sec:proof_main_result}.    

\medskip

For the rest of the paper we work in the lower half of $B_n$, that is, with subsets of size at most $\lceil n/2\rceil$. The main reason is that a major part of the argument (\Cref{subsec:k_2}) is about the base case which deals with maximal antichains in $\binom{[n]}{n-2}\cup\binom{[n]}{n-3}$ or in $\binom{[n]}{2}\cup\binom{[n]}{3}$. Working with sets of size $n-2$ and $n-3$ was convenient for the informal overview above, because the $(n-2)$-sets are added in squashed order, and this ordering is very familiar for many people working in the field. For our formal proof it is easier to work with sets of size 2 and 3, because this allows us to interpret the 2-sets in the antichain directly as the edges of a graph (without the need to take complements), and using concepts from graph theory turns out to be useful for our argument. Note that the plots in \Cref{fig:example_13_11,fig:example_19_13} look essentially the same if we work in the lower half: We just have to change the labels of the $x$-axes to ``number of $2$-sets'' and ``number of $6$-sets'', respectively, and replace $\binom{19}{12}$ with $\binom{19}{7}$ etc.        

For notational convenience, we reserve the letter $k$ for $\lceil n/2\rceil$, so that $n\in\{2k-1,2k\}$.

\subsection*{Outline of the proof}
Let $S(n,l)$ be the subset of $S(n)$ containing the sizes of flat antichains on levels $l$ and~$l+1$:
\[S(n,l)=\left\{\left\lvert\mathcal A\right\rvert\,:\,\mathcal A\subseteq\binom{[n]}{l}\cup\binom{[n]}{l+1}\text{ is a maximal antichain in }B_n\right\}.\]
The following proposition, to be proved in \Cref{sec:large_flat,sec:proof_main_result}, states that $S(n,l)$ covers an interval from slightly below $\binom{n}{l}$ to slightly below $\binom{n}{l+1}$. For convenience, we denote by $C_l$ the sum of the first $l$ Catalan numbers, that is, $C_l=\sum_{i=1}^l\frac{1}{i+1}\binom{2i}{i}$.
\begin{proposition}\label{prop:flat}
$\phantom{x}$
\begin{enumerate}[(i)]
\itemsep+2ex
  \item
  $\displaystyle\left[\binom{n}{2}-\left\lfloor\frac{(n+1)^2}{8}\right\rfloor,\ \binom{n}{3}-(n-3)\left\lceil\frac{n-2}{2}\right\rceil\right]\subseteq S(n,2)$
  \item
  $\displaystyle \left[\binom{n}{l}-\gamma,\ \binom{n}{l+1}-(n-l-1)\left\lceil\frac{n-l}{2}\right\rceil\right]\subseteq S(n,l)$
  ~for~ $\displaystyle 3\leq l\leq\frac{n-2}{2}$, \\[2ex]
  where $\displaystyle\gamma=\max\left\{3+C_l,\,(1+(l-1)(n-k))\left(\left\lceil\frac{1}{k}\binom{k}{l}\right\rceil-l+2\right)\right\}$
\end{enumerate}
\end{proposition}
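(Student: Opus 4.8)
The plan is to control the sizes in $S(n,l)$ through a single invariant. Every flat maximal antichain on levels $l$ and $l+1$ is determined by its family of $(l+1)$-sets $\mathcal F\subseteq\binom{[n]}{l+1}$ via $\mathcal A=\mathcal F\cup\binom{[n]}{l}\setminus\Delta\mathcal F$, its size equals $\abs{\mathcal F}+\binom{n}{l}-\abs{\Delta\mathcal F}=\binom{n}{l}-\bigl(\abs{\Delta\mathcal F}-\abs{\mathcal F}\bigr)$, and it is maximal precisely when $\Delta A\not\subseteq\Delta\mathcal F$ for every $A\in\binom{[n]}{l+1}\setminus\mathcal F$. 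Producing a size $\binom{n}{l}-\delta$ therefore amounts to exhibiting a maximal $\mathcal F$ whose \emph{defect} $\abs{\Delta\mathcal F}-\abs{\mathcal F}$ equals $\delta$, and the task becomes: realize every defect $\delta$ in a prescribed range. I would prove part~(i) as the base case $l=2$ and then obtain part~(ii) by transferring the base case to successively larger $l$ through the recursions of \Cref{lem:recursions}.

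For part~(i) I would first realize the right endpoint $\binom{n}{3}-(n-3)\left\lceil\frac{n-2}{2}\right\rceil$ by a single base family of defect exactly $(n-3)\left\lceil\frac{n-2}{2}\right\rceil$; using the graph encoding in which the $2$-sets of $\mathcal A$ are the edges of a graph on $[n]$, this family is essentially a near-perfect matching, and the endpoint value is confirmed by a direct count of the covered edges. To reach every smaller value I would build a one-parameter family $\mathcal F_0,\dots,\mathcal F_\lambda$ whose defects increase in unit steps, and then adjoin the first $t$ of the $(l+1)$-sets in squashed order to form the antichains $\mathcal A_{i,t}$ of \Cref{lem:base_case}; because one squashed set changes the defect by a controlled amount (computed from Kruskal--Katona) while the index $i$ moves it by one, the sizes $\abs{\mathcal A_{i,t}}$ fall into overlapping blocks whose union is gap-free. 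A ground-set recursion (\Cref{lem:recursions}) together with the extremal construction of \cite{Gruettmueller2009} then extends the coverage down to the minimum $\binom{n}{2}-\left\lfloor\frac{(n+1)^2}{8}\right\rfloor$, completing part~(i).

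For part~(ii) I would recurse on the level. Assuming a covered interval at level $l-1$, the recursions of \Cref{lem:recursions} transfer it to a covered interval at level $l$ by shifting every size by a fixed constant, and the residual gaps near the top are filled by the cascade embedding sketched in \Cref{sec:high_level}: writing $t$ in its cascade representation with bottom term $a_r\geq r+2$ and inserting a base-case family $\mathcal F'\subseteq\binom{[r+1]}{r-1}$ with prescribed defect $\abs{\Delta\mathcal F'}-\abs{\mathcal F'}=\delta$, the partition of $\Delta\mathcal F$ gives a maximal antichain of size exactly $f(t)-\delta$ and hence every value between $f\bigl(t+\binom{r+1}{r-1}\bigr)-5$ and just below $f(t)$. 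Chaining these pieces across all relevant $t$ and across the levels of the recursion is precisely the content of \Cref{prop:large_flat}, and it yields the single interval with lower endpoint $\binom{n}{l}-\gamma$.

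The main obstacle, and the reason the statement is delicate rather than routine, is to guarantee that these many short intervals assemble into one interval with no gaps. Two quantities must be kept in balance: consecutive squashed maximal antichain sizes differ by at most $l$, whereas the base construction must supply \emph{every} intermediate value, which it does with a constant slack of $5$ built into the endpoints. The hardest point is the endpoint bookkeeping in the recursion: at each level one must push the lower end of the constructed interval far enough down that it overlaps the interval produced at the next level, and it is exactly this double requirement that forces the two-term maximum defining $\gamma$ — one term of order $C_l$ governing the internal gap-filling and the other controlling the overlap across levels. Verifying that both bounds hold simultaneously, uniformly for all $l\leq\frac{n-2}{2}$, is where the bulk of the work lies.
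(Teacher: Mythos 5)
Your proposal reconstructs only one half of the proof and misattributes the origin of the other half. The entire machinery you describe --- the defect invariant, the graph-encoded base case with the families $\mathcal F_0,\dots,\mathcal F_\lambda$, the squashed-order fill-in, and the level-to-level transfer via \Cref{lem:recursions} --- is the content of \Cref{prop:large_flat}, and it reaches down only to $\binom{n}{l}-3-C_l$. That accounts for the \emph{first} term in the maximum defining $\gamma$. Your closing paragraph asserts that the second term, $(1+(l-1)(n-k))\left(\left\lceil\frac{1}{k}\binom{k}{l}\right\rceil-l+2\right)$, arises from ``endpoint bookkeeping in the recursion,'' i.e.\ from the overlap conditions between levels. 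That is not where it comes from, and no recursion-based bookkeeping could produce it: for fixed $l\leq k-3$ and large $n$ this term grows like $n^l$, comparable to $\binom{n}{l}$ itself, so the interval in part~(ii) extends enormously far below $\binom{n}{l}-3-C_l$. The overlap conditions in the induction are instead absorbed into the auxiliary function $f$ and the inequality checks inside the proof of \Cref{prop:large_flat}; they never lengthen the interval beyond $3+C_l$ at the lower end. Your proposal contains no construction whatsoever for flat maximal antichains of sizes in $\left[\binom{n}{l}-\gamma,\ \binom{n}{l}-3-C_l\right)$ when the second term dominates, which is the generic case.

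The paper fills this gap with a second, independent construction (\Cref{subsec:thm_flat}): fix $t$, take a pairwise shadow-disjoint family $\mathcal F\subseteq\binom{[t]}{l}$ (meaning $\abs{A\cap B}\leq l-2$ for distinct members) of size $s\leq\phi(t,l)$, where $\phi(t,l)\geq\left\lceil\frac{1}{t}\binom{t}{l}\right\rceil$ by a coding-theoretic result of Graham and Sloane, attach to each $A\in\mathcal F$ a nonempty $X(A)\subseteq\{t+1,\dots,n\}$, and take the $(l+1)$-sets $A\cup\{i\}$ with $i\in X(A)$. Verifying maximality of the resulting antichain is a genuine argument (this is where shadow-disjointness is used), and the size is $\binom{n}{l}-s-(l-1)\sum_{A}\abs{X(A)}$. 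Varying $s$ and the $X(A)$ gives arithmetic progressions with common difference $l-1$ (\Cref{lem:coding_construction}), and \Cref{lem:general_partition} shows their union covers $\left[\binom{n}{l}-(1+(l-1)(n-t))(\phi(t,l)-l+2),\ \binom{n}{l}-(l-1)^2-1\right]$; taking $t=k$ and using $(l-1)^2+1\leq 4+C_l$ splices this onto the interval from \Cref{prop:large_flat}, which is exactly why $\gamma$ is a two-term maximum. Note also that part~(i) relies on the same construction (with $t\approx n/2$ and $\mathcal F$ a perfect matching) to cover \emph{all} sizes from $\binom{n}{2}-\left\lfloor\frac{(n+1)^2}{8}\right\rfloor$ up to $\binom{n}{2}-2$: the single extremal example of \cite{Gruettmueller2009} yields only the minimum size, and the ground-set recursion \eqref{eq:adding_isolated_vertex} shifts $\min S(n-1,2)$ to $\binom{n}{2}-\left\lfloor\frac{n^2}{8}\right\rfloor$, strictly above the required lower endpoint, so your route for part~(i) also falls short without the drop-one-$3$-set argument made rigorous by this construction.
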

It has been shown in \cite{Griggs2021} that the minimum size of a squashed maximal flat antichain on levels $l$ and $l+1$ is $\binom{n}{l}-C_l$. The first options for $\gamma$ in the second part of \Cref{prop:flat} says that we can make it smaller by 3 if we drop the squashed condition.\footnote{The flat maximal antichain of size $\binom{n}{l}-C_l-3$ which comes out of our proof of \Cref{prop:flat} has the following $(l+1)$-sets: the first $2+\sum_{i=4}^{l+1}\binom{2i-2}{i}$ sets in squashed order, and the two sets $\{3,5,6\}\cup\{7,9,\dots,2l+1\}$ and $\{4,5,6\}\cup\{7,9,\dots,2l+1\}$.}
The right end of the interval corresponds to an antichain obtained by taking $\left\lceil\frac{n-l}{2}\right\rceil$ $l$-sets such that no two of them have a common superset of size $l+1$, together with all $(l+1)$-sets which are not a superset of any of the chosen $l$-sets.

For $n\geq 15$, \Cref{prop:flat} is already sufficient to deduce the statement of \Cref{thm:main_result}. For instance, for $n=15$ the intervals obtained from \Cref{prop:flat} are  
\begin{itemize}
    \item $\left[\binom{15}{2}-32,\,\binom{15}{3}-84\right]=[73,371]\subseteq S(15,2)$,
    \item $\left[\binom{15}{3}-90,\,\binom{15}{4}-66\right]=[365,1299]\subseteq S(15,3)$,
    \item $\left[\binom{15}{4}-154,\,\binom{15}{5}-60\right]=[1211,2943]\subseteq S(15,4)$,
    \item $\left[\binom{15}{5}-116,\,\binom{15}{6}-45\right]=[2887,4960]\subseteq S(15,5)$,
    \item $\left[\binom{15}{6}-199,\,\binom{15}{7}-40\right]=[4860,6395]\subseteq S(15,6)$.
\end{itemize}
The union of these intervals is $\left[\binom{15}{2}-32,\binom{15}{7}-40\right]=[73,6395]$ which is the one required by \Cref{thm:main_result}. 

The proof of \Cref{prop:flat} comes in two parts corresponding to the two possible values of $\gamma$. In \Cref{sec:large_flat} we construct maximal antichains $\mathcal A\subseteq\binom{[n]}{l}\cup\binom{[n]}{l+1}$ for all sizes $m$ with
\[\binom{n}{l}-3-\sum_{i=1}^{l}\frac{1}{i+1}\binom{2i}{i}\leq m\leq\binom{n}{l+1}-(n-l-1)\left\lceil\frac{n-l}{2}\right\rceil,\]
and in Section~\ref{subsec:thm_flat} we use a result from coding theory to obtain the sizes $m$ with
\[\binom{n}{l}-(1+(l-1)(n-k))\left(\left\lceil\frac{1}{k}\binom{k}{l}\right\rceil-l+2\right)\leq m\leq\binom{n}{l}-(l-1)^2-1.\]
Combining these two results, we obtain \Cref{prop:flat}. The proof of \Cref{thm:main_result} is completed in \Cref{subsec:proof_main_thm} by verifying that the intervals for $l=2,3,\dots,\left\lfloor\frac{n-2}{2}\right\rfloor$ in \Cref{prop:flat}, together with some extra constructions for small $n$, cover the required interval.


\section{Large maximal flat antichains}\label{sec:large_flat}
In this section we prove the following half of \Cref{prop:flat} (one of the two possible values of $\gamma$).
\begin{proposition}\label{prop:large_flat}
   For every integer $l$ with $2\leq l\leq(n-2)/2$, 
  \[\left[\binom{n}{l}-3-C_l,\ \binom{n}{l+1}-(n-l-1)\left\lceil\frac{n-l}{2}\right\rceil+f(n-l)\right]\subseteq S(n,l),\]
  where the function $f:\nats\to\nats$ is given by
  \[f(t)=
    \begin{cases}
      0 & \text{for }t\leq 2,\\
      1 & \text{for }t\in\{3,4\},\\
      3 & \text{for }t\in\{5,6\},\\
      4 & \text{for }t\in\{7,8\},\\
      7 & \text{for }t\in\{9,10\},\\
      t-1 & \text{for }t\geq 11.
    \end{cases}
    \]
\end{proposition}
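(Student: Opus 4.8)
The plan is to restate the problem in terms of the family of $(l+1)$-sets of the antichain and then realise every required size by an induction on $l$ anchored at the base case $l=2$. As recorded in \Cref{sec:high_level}, a flat maximal antichain on levels $l$ and $l+1$ is determined by $\mathcal F=\mathcal A\cap\binom{[n]}{l+1}$ via $\mathcal A=\mathcal F\cup\binom{[n]}{l}\setminus\Delta\mathcal F$, has size $\abs{\mathcal A}=\binom{n}{l}-\bigl(\abs{\Delta\mathcal F}-\abs{\mathcal F}\bigr)$, and is maximal exactly when $\Delta A\not\subseteq\Delta\mathcal F$ for every $A\in\binom{[n]}{l+1}\setminus\mathcal F$. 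Writing $d(\mathcal F)=\abs{\Delta\mathcal F}-\abs{\mathcal F}$ for the \emph{deficiency}, the goal becomes to produce, for every integer $d$ in
\[\left[\binom{n}{l}-\binom{n}{l+1}+(n-l-1)\left\lceil\tfrac{n-l}{2}\right\rceil-f(n-l),\ 3+C_l\right],\]
a family $\mathcal F$ with $d(\mathcal F)=d$ satisfying the maximality condition. The squashed families $\mathcal I_t$ form the backbone: by \cite[Proposition 3.1]{Griggs2021} the antichain built from $\mathcal I_t$ is maximal whenever the cascade representation of $t$ ends in a term $\binom{a_r}{r}$ with $r\ge2$, and Kruskal--Katona \cite{Kruskal1963,Katona1968} gives that consecutive squashed maximal antichains have sizes differing by at most $l$. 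Thus the squashed antichains already realise sizes throughout the interval, leaving gaps of width at most $l-1$, and the whole difficulty is to fill these gaps and to extend the interval by the last few sizes at either end.

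For the base case $l=2$ I would argue as in \Cref{lem:base_case}. Here the $2$-sets of $\mathcal A$ form a graph $G$ on $[n]$, the $3$-sets of $\mathcal A$ are exactly the independent triples of $G$, and the maximality condition becomes a condition on $G$. Using the explicit families $\mathcal F_i$ of \Cref{sec:high_level} — designed so that adjoining the squashed $3$-sets one at a time changes the shadow by a prescribed amount — one covers a sub-interval of sizes ending at $\abs{\mathcal A_{\lambda,0}}$ near $\binom{n}{3}$; combining this with an induction on $n$ through the recursion of \Cref{lem:recursions} that adjoins a universal vertex (in the graph picture), which preserves maximality and raises the size by $n-1$, one reaches down to $\binom{n}{2}-6=\binom{n}{2}-3-C_2$. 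Checking that these two sub-intervals overlap then gives the whole interval for $l=2$.

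For the inductive step $l\ge3$ I would fill the gap lying just below a squashed size $\binom{n}{l}+t-\abs{\Delta\mathcal I_t}$, where the cascade of $t$ ends in $\binom{a_r}{r}$ with $a_r\ge r+2$, by appending to $\mathcal I_t$ a block of new $(l+1)$-sets $\bigl\{A\cup\{a_j+1:r\le j\le l+1\}:A\in\mathcal F'\bigr\}$, where $\mathcal F'\subseteq\binom{[r+1]}{r-1}$ is a gadget supplied by the base case (up to complementation, a flat maximal antichain on levels $2$ and $3$ in $[r+1]$). Since the adjoined coordinates $a_j+1$ lie strictly above $[r+1]$, the shadow splits as a disjoint union $\Delta\mathcal F=\Delta\mathcal I_t\,\sqcup\,\bigl\{A\cup\{a_j+1:r\le j\le l+1\}:A\in\Delta\mathcal F'\bigr\}$, whence $d(\mathcal F)=d(\mathcal I_t)+d(\mathcal F')$ and the deficiencies available to $\mathcal F'$ transfer directly to the sizes filling the gap. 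The far right endpoint (sizes near $\binom{n}{l+1}$) I would realise separately by the antichain consisting of $\left\lceil\tfrac{n-l}{2}\right\rceil$ $l$-sets that pairwise have no common $(l+1)$-superset together with all $(l+1)$-sets avoiding their (pairwise disjoint) shades, adjusted by the small correction $f(n-l)$, while the left endpoint $\binom{n}{l}-C_l-3$ is given by the explicit non-squashed family in the footnote to \Cref{prop:flat}.

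The step I expect to be the crux is the global bookkeeping that fuses all these sub-intervals into a single gap-free interval. Each base-case gadget supplies deficiencies only in a bounded window, and as $t$ (hence $r$ and the $a_j$) varies one must ensure that the window opened below one squashed size reaches down to the next squashed size with nothing omitted. This is exactly where one moves recursively from level to level through the recursions of \Cref{lem:recursions}, rather than jumping straight from $l=2$, and where the lower end of the interval is pushed a little further down at each level so that consecutive images overlap; the precise values of $f$ — including the correction $+f(n-l)$ at the right endpoint — emerge from tracking these windows through the recursion. A second, more routine, point is to verify the maximality condition $\Delta A\not\subseteq\Delta\mathcal F$ for each non-squashed family produced, which the disjoint block structure of $\Delta\mathcal F$ together with the slack afforded by $a_r\ge r+2$ reduces to the maximality of the gadget $\mathcal F'$.
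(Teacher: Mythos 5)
Your base case matches the paper's: \Cref{lem:base_case} is proved exactly as you sketch, via the graph correspondence and induction on $n$ through recursion~(\ref{eq:adding_isolated_vertex}). But for $l\geq 3$ your central mechanism --- appending gadget blocks $\{A\cup\{a_j+1\,:\,r\leq j\leq l+1\}\,:\,A\in\mathcal F'\}$ to a squashed initial segment $\mathcal I_t$ --- is the paper's \emph{informal overview} (\Cref{sec:high_level}, explicitly ``without proofs''), not its proof, and the paper itself points out that this construction yields only many short intervals with gaps between them. The formal proof of \Cref{prop:large_flat} abandons the squashed construction entirely: it is a double induction on $l$ and $n$ using only the three recursions of \Cref{lem:recursions}. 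For $n=2l+2$ it applies~(\ref{eq:recursion_1}) to an interval in $S(2l+1,l-1)$ and~(\ref{eq:recursion_1}) plus~(\ref{eq:adding_isolated_vertex}) to one in $S(2l,l-1)$, with the left endpoint $\binom{n}{l}-3-C_l$ produced by the identity $\binom{2l}{l-1}+\binom{2l}{l+1}+\binom{2l+1}{l-1}=\binom{2l+2}{l}-\frac{1}{l+1}\binom{2l}{l}$ (the Catalan sum accumulates through the recursion; it does not come from directly verifying the explicit family in the footnote, as you propose). For $n\geq 2l+3$ it pushes forward three intervals $I_1\subseteq S(n-1,l)$, $I_2\subseteq S(n-1,l-1)$, $I_3\subseteq S(n-2,l-1)$ via~(\ref{eq:adding_isolated_vertex}), (\ref{eq:recursion_1}) and~(\ref{eq:recursion_2}) respectively.

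The genuine gap is that you name the fusing of the sub-intervals as ``the crux'' and assert that the precise values of $f$ ``emerge from tracking these windows through the recursion,'' but that tracking \emph{is} the content of the proposition and you never perform it. Concretely, the proof stands or falls on the overlap verifications: the inequality $l\left\lceil\frac{l+1}{2}\right\rceil-f(l+1)\leq 4+C_{l-1}$ in the $n=2l+2$ step, and $\binom{n-2}{l}-\binom{n-2}{l-1}-(n-l-2)(n-l)+2f(n-l-1)+4+C_{l-1}\geq 0$ for $n\geq 2l+3$, the latter reduced to \Cref{lem:aux_inequality} except for the pairs $(l,n)\in\{(3,9),(3,10)\}$, which must be checked by hand. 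These inequalities are what force the specific definition of $f$, including the $+f(n-l)$ at the right endpoint, which you ``adjust by the small correction'' without construction; in the paper those extra sizes come from starters whose complements are forests with exactly $i$ pairs of adjacent edges for $0\leq i\leq i^*(n)$ (\Cref{lem:j^*_bound}), propagated through the recursions. A secondary flaw: your justification of the shadow partition $\Delta\mathcal F=\Delta\mathcal I_t\sqcup\{A\cup\{a_j+1\,:\,r\leq j\leq l+1\}\,:\,A\in\Delta\mathcal F'\}$ (``the adjoined coordinates lie strictly above $[r+1]$'') is insufficient, since removing a tail element $a_j+1$ from a block set produces an $l$-set whose membership in $\Delta\mathcal I_t$ depends on the cascade structure of $t$; but since this construction plays no role in the actual proof, repairing it would not close the main gap.
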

The interval in \Cref{prop:large_flat} goes a bit further to the right than needed for \Cref{prop:flat}. The proof is by induction using the recursions provided in \Cref{subsec:recurrences}, and the purpose of the function~$f$ is to facilitate the induction step by ensuring that certain shorter intervals of sizes overlap. Another point where the slight extension of the interval to the right turns out to be useful is the proof of \Cref{lem:small_l} where it reduces the number of small values of $(n,l)$ that have to be treated by an alternative method (see \Cref{tab:intervals}).

\Cref{subsec:k_2} contains the base case $l=2$. For this, we exploit an observation from~\cite{Gruettmueller2009} about a correspondence between maximal antichains $\mathcal A\subseteq\binom{[n]}{2}\cup\binom{[n]}{3}$ and graphs on $n$ vertices with the property that every edge is contained in at least one triangle. \Cref{subsec:k_geq_3} contains the induction step for the induction on $l$ and $n$ to conclude the proof of \Cref{prop:large_flat}.
\subsection{Recurrences}\label{subsec:recurrences}
In this subsection we prove \Cref{lem:recursions} which contains three recurrence relations between the sets $S(n,l)$. The ideas for these recursions is that we obtain maximal antichains in $\binom{[n]}{l}\cup\binom{[n]}{l+1}$ by any of the following three operations:
\begin{enumerate}
\item Start with a maximal antichain in $\binom{[n-1]}{l}\cup\binom{[n-1]}{l+1}$ and add all
  $l$-subsets of $[n]$ which contain the element $n$.
\item Start with a maximal antichain in $\binom{[n-1]}{l-1}\cup\binom{[n-1]}{l}$, add the element
  $n$ to each of its members, and add all $(l+1)$-subsets of $[n-1]$ to the resulting family.
\item Start with two maximal antichain $\mathcal A'$ and $\mathcal A''$ in
  $\binom{[n-2]}{l-1}\cup\binom{[n-2]}{l}$. Add the element $n-1$ to the members of
  $\mathcal A'$, and the element $n$ to the members of $\mathcal A''$. Take the union of the two
  resulting families and add all $(l+1)-$subsets of $[n-2]$ together with all the $l$-subsets of
  $[n]$ which contain $\{n-1,n\}$ as a subset.
\end{enumerate}

For convenience, for a set of integers $X$ and an integer $y$, we write $X+y$ to denote the set $\{x+y\,:\,x\in X\}$, and for two sets $X$ and $Y$, we write $X+Y$ for $\{x+y\,:\,x\in X,\,y\in Y\}$. 

\begin{lemma}\label{lem:recursions}
  \begin{align}
    S(n,l) &\supseteq S(n-1,l)+\binom{n-1}{l-1} &&\text{for }2\leq l\leq n-1,\label{eq:adding_isolated_vertex}\\
    S(n,l) &\supseteq S(n-1,l-1)+\binom{n-1}{l+1}&&\text{for }3\leq l\leq n-2,\label{eq:recursion_1}\\
    S(n,l) &\supseteq S(n-2,l-1)+S(n-2,l-1)+\binom{n-2}{l+1}+\binom{n-2}{l-2}&&\text{for }3\leq l\leq n-3. \label{eq:recursion_2}
  \end{align}  
\end{lemma}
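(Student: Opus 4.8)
The plan is to establish all three inclusions from a common template. For each of the three operations I would write down the resulting family $\mathcal A^*=\mathcal A_1^*\cup\mathcal A_2^*$ with $\mathcal A_1^*\subseteq\binom{[n]}{l}$ and $\mathcal A_2^*\subseteq\binom{[n]}{l+1}$, and then verify that $\mathcal A^*$ is a maximal antichain by checking the two identities $\mathcal A_1^*=\binom{[n]}{l}\setminus\Delta\mathcal A_2^*$ and $\mathcal A_2^*=\binom{[n]}{l+1}\setminus\nabla\mathcal A_1^*$ from the maximality criterion in \Cref{sec:high_level}. (The first identity already forces the antichain property, since it prevents any member of $\mathcal A_1^*$ from lying below a member of $\mathcal A_2^*$, so verifying both identities is exactly what is needed.) The size is then obtained by splitting $\mathcal A^*$ into blocks that are pairwise disjoint because they are distinguished by which of the newly introduced elements they contain. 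In every case the maximality of the input antichain(s) one dimension down enters precisely through its own pair of shadow/shade identities.

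For \eqref{eq:adding_isolated_vertex}, starting from a maximal antichain $\mathcal A=\mathcal A_1\cup\mathcal A_2\subseteq\binom{[n-1]}{l}\cup\binom{[n-1]}{l+1}$, I would set $\mathcal A_2^*=\mathcal A_2$ and adjoin to $\mathcal A_1$ all $l$-sets of $[n]$ through $n$. Since $\mathcal A_2$ lives in $[n-1]$ its shadow is unchanged, so $\binom{[n]}{l}\setminus\Delta\mathcal A_2^*$ splits into the sets avoiding $n$ (giving back $\mathcal A_1$) and all sets through $n$; and every $(l+1)$-set through $n$ contains one of the adjoined $l$-sets while one avoiding $n$ behaves as in $[n-1]$, so the shade identity reduces to that for $\mathcal A$. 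The increment is the number $\binom{n-1}{l-1}$ of $l$-sets through $n$. For \eqref{eq:recursion_1}, starting from $\mathcal B=\mathcal B_1\cup\mathcal B_2\subseteq\binom{[n-1]}{l-1}\cup\binom{[n-1]}{l}$, I would take $\{B\cup\{n\}:B\in\mathcal B\}$ together with $\binom{[n-1]}{l+1}$. Separating sets by whether they contain $n$, the shade of $\mathcal A_1^*=\{B\cup\{n\}:B\in\mathcal B_1\}$ only reaches $(l+1)$-sets through $n$, among which the survivors match $\binom{[n-1]}{l}\setminus\nabla\mathcal B_1=\mathcal B_2$; and the shadow of $\binom{[n-1]}{l+1}$ is all of $\binom{[n-1]}{l}$ (here $l\leq n-2$ is used), killing every $l$-set avoiding $n$, while among the $l$-sets through $n$ the survivors match $\binom{[n-1]}{l-1}\setminus\Delta\mathcal B_2=\mathcal B_1$. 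The two blocks are disjoint, giving the increment $\binom{n-1}{l+1}$.

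The substantive case is \eqref{eq:recursion_2}, and its bookkeeping is where I expect the main work to lie. From maximal antichains $\mathcal A',\mathcal A''\subseteq\binom{[n-2]}{l-1}\cup\binom{[n-2]}{l}$, I would form
\[\mathcal A^*=\{A\cup\{n-1\}:A\in\mathcal A'\}\cup\{A\cup\{n\}:A\in\mathcal A''\}\cup\binom{[n-2]}{l+1}\cup\mathcal D,\]
where $\mathcal D=\{X\in\binom{[n]}{l}:\{n-1,n\}\subseteq X\}$. The key is to classify every $l$-set and every $(l+1)$-set of $[n]$ by which of $n-1,n$ it contains, giving four types in each level, and to show that the two identities decouple along this classification. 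Concretely: the block $\binom{[n-2]}{l+1}$ kills exactly the $l$-sets avoiding both $n-1,n$ (this uses $l\leq n-3$, so that such an $l$-set still has an $(l+1)$-superset inside $[n-2]$); $\mathcal D$ consists of all $l$-sets containing both $n-1,n$, and these survive because each of their $(l+1)$-supersets again contains both $n-1,n$ and is hence absent from $\mathcal A_2^*$, while conversely every $(l+1)$-set through both contains a member of $\mathcal D$ and is correctly excluded. On the two mixed types the identities restrict to exactly the identities for $\mathcal A'$ (sets through $n-1$ only) and for $\mathcal A''$ (sets through $n$ only), so the maximality of $\mathcal A^*$ follows from that of $\mathcal A'$ and $\mathcal A''$ separately. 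The four blocks are pairwise disjoint, so $\abs{\mathcal A^*}=\abs{\mathcal A'}+\abs{\mathcal A''}+\binom{n-2}{l+1}+\binom{n-2}{l-2}$, the last term counting the $l$-sets through $\{n-1,n\}$.

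The main obstacle is thus organizational rather than conceptual: in \eqref{eq:recursion_2} one must keep the eight set-types straight and confirm that $\binom{[n-2]}{l+1}$ and $\mathcal D$ fill \emph{precisely} the gaps left by the two embedded copies of $\mathcal A'$ and $\mathcal A''$. I would double-check at each step that the stated ranges ($l\leq n-2$ in \eqref{eq:recursion_1}, and $3\leq l\leq n-3$ in \eqref{eq:recursion_2}) are exactly what make the relevant extension and restriction arguments valid, since these are the conditions guaranteeing that the auxiliary levels are nonempty and that shadows and shades reach the full level where required.
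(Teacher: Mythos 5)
Your proposal is correct and takes essentially the same route as the paper: the three constructions are exactly the ones the paper uses, and your verification via the pair of identities $\mathcal A_1=\binom{[n]}{l}\setminus\Delta\mathcal A_2$ and $\mathcal A_2=\binom{[n]}{l+1}\setminus\nabla\mathcal A_1$ is precisely how the paper checks maximality for \eqref{eq:recursion_2}, while it merely asserts the easier cases \eqref{eq:adding_isolated_vertex} and \eqref{eq:recursion_1} that you spell out. Your classification of sets by their intersection with $\{n-1,n\}$ is just an explicit rendering of the paper's displayed decompositions of $\Delta\mathcal A_2$ and $\nabla\mathcal A_1$, with the range conditions ($l\leq n-2$, resp.\ $l\leq n-3$) used at the same spots.
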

\begin{proof}
  For~(\ref{eq:adding_isolated_vertex}), let $\mathcal A'\subseteq\binom{[n-1]}{[l]}\cup\binom{[n-1]}{[l+1]}$ be a maximal antichain in $B_{n-1}$. Then
  \[\mathcal A=\mathcal A'\cup\left\{A\cup\{n\}\,:\,A\in\binom{[n-1]}{l-1}\right\}\subseteq\binom{[n]}{l}\cup\binom{[n]}{l+1}\]
  is a maximal antichain in $B_n$. For~(\ref{eq:recursion_1}), let $\mathcal A'\subseteq\binom{[n-1]}{[l-1]}\cup\binom{[n-1]}{[l]}$ be a maximal antichain in $B_{n-1}$. Then
  \[\mathcal A=\left\{A\cup\{n\}\,:\,A\in\mathcal A'\right\}\cup\binom{[n-1]}{l+1}\subseteq\binom{[n]}{l}\cup\binom{[n]}{l+1}\]
  is a maximal antichain in $B_n$. For~(\ref{eq:recursion_2}), let $\mathcal A',\mathcal A''\subseteq\binom{[n-2]}{l-1}\cup\binom{[n-2]}{l}$ be maximal antichains in $B_{n-2}$. Then
  \[\mathcal A=\left\{A\cup\{n-1\}\,:\,A\in\mathcal A'\right\}\cup\left\{A\cup\{n\}\,:\,A\in\mathcal A''\right\}\cup\left\{A\cup\{n-1,n\}\,:\,A\in\binom{[n-2]}{l-2}\right\}\cup\binom{[n-2]}{l+1}\]
  is a maximal antichain in $B_n$. To see this, write $\mathcal A'=\mathcal A'_1\cup\mathcal A'_2$, $\mathcal A''=\mathcal A''_1\cup\mathcal A''_2$, $\mathcal A=\mathcal A_1\cup\mathcal A_2$ with $\mathcal A'_1,\mathcal A''_1\subseteq\binom{[n-2]}{l-1}$, $\mathcal A'_2,\mathcal  A''_2\subseteq\binom{[n-2]}{l}$, $\mathcal A_1\subseteq\binom{[n]}{l}$, and $\mathcal A_2\subseteq\binom{[n]}{l+1}$. Then
  \begin{align*}
    \mathcal A_1 &= \left\{A\cup\{n-1\}\,:\,A\in\mathcal A'_1\right\}\cup\left\{A\cup\{n\}\,:\,A\in\mathcal A''_1\right\}\cup\left\{A\cup\{n-1,n\}\,:\,A\in\binom{[n-2]}{l-2}\right\},\\
    \mathcal A_2 &= \left\{A\cup\{n-1\}\,:\,A\in\mathcal A'_2\right\}\cup\left\{A\cup\{n\}\,:\,A\in\mathcal A''_2\right\}\cup\binom{[n-2]}{l+1}.
  \end{align*}
  This implies
  \begin{align*}
    \Delta\mathcal A_2 &= \left\{A\cup\{n-1\}\,:\,A\in\Delta\mathcal A'_2\right\}\cup\left\{A\cup\{n\}\,:\,A\in\Delta\mathcal A''_2\right\}\cup\binom{[n-2]}{l},\\
    \nabla\mathcal A_1 &= \left\{A\cup\{n-1\}\,:\,A\in\nabla'\mathcal A'_1\right\}\cup\left\{A\cup\{n\}\,:\,A\in\nabla'\mathcal A''_1\right\}\cup\left\{A\cup\{n-1,n\}\,:\,A\in\binom{[n-2]}{l-1}\right\},
  \end{align*}
  where $\nabla'$ denotes the shade in $B_{n-2}$, that is, $\nabla'(A)=\{A\cup\{i\}\,:\,i\in[n-2]\setminus A\}$. Now the required equations $\mathcal A_1=\binom{[n]}{l}\setminus\Delta\mathcal A_2$ and $\mathcal A_2=\binom{[n]}{l+1}\setminus\nabla\mathcal A_1$ follow from 
  \begin{align*}
    \mathcal A'_1 &= \binom{[n-2]}{l-1}\setminus\Delta\mathcal A'_2, & \mathcal A'_2 &= \binom{[n-2]}{l}\setminus\nabla'\mathcal A'_1, & \mathcal A''_1 &= \binom{[n-2]}{l-1}\setminus\Delta\mathcal A''_2, & \mathcal A''_2 &= \binom{[n-2]}{l}\setminus\nabla'\mathcal A''_1.\qedhere
  \end{align*}
\end{proof}

\subsection{The base case}\label{subsec:k_2}
In this subsection we prove the statement of \Cref{prop:large_flat} for $l=2$.
\begin{lemma}\label{lem:base_case}
  For every $n\geq 6$, $\left[\binom{n}{2}-6,\ \binom{n}{3}-(n-3)\left\lceil\frac{n-2}{2}\right\rceil+f(n-2)\right]\subseteq S(n,2)$. 
\end{lemma}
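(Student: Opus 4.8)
The statement concerns flat maximal antichains on levels $2$ and $3$, which by the observation from \cite{Gruettmueller2009} correspond bijectively to graphs $G$ on vertex set $[n]$ in which every edge lies in at least one triangle. Indeed, if $\mathcal A=\mathcal A_1\cup\mathcal A_2$ with $\mathcal A_1\subseteq\binom{[n]}{2}$ and $\mathcal A_2\subseteq\binom{[n]}{3}$, then taking $G$ to be the graph with edge set $E(G)=\binom{[n]}{2}\setminus\mathcal A_1=\Delta\mathcal A_2$, maximality of $\mathcal A$ is equivalent to the condition that every edge of $G$ is covered by a triangle and that $\mathcal A_2=\binom{[n]}{3}\setminus\nabla\mathcal A_1$ consists exactly of the triangles of $G$. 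The plan is to translate the size requirement into graph-theoretic language: if $G$ has $e$ edges and spans $t$ triangles (where by ``triangle'' we mean a $3$-set all of whose pairs are edges of $G$), then
\[
\abs{\mathcal A}=\abs{\mathcal A_1}+\abs{\mathcal A_2}=\left(\binom{n}{2}-e\right)+t=\binom{n}{2}-(e-t).
\]
Thus producing a flat maximal antichain of size $\binom{n}{2}-d$ is the same as producing a graph $G$ in which every edge is in a triangle and for which $e-t=d$, the \emph{excess} of edges over triangles. So I would reduce the lemma to showing that every integer $d$ in the range
\[
6\ \geq\ d\ \geq\ (n-3)\left\lceil\tfrac{n-2}{2}\right\rceil-f(n-2)
\]
is realizable as the excess $e(G)-t(G)$ of such a covered graph.

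\textbf{Realizing the two ends of the interval.} First I would pin down the extreme values. The lower end $d=6$ (giving size $\binom{n}{2}-6$) should come from a small fixed gadget: a graph with a constant number of edges and triangles whose excess is exactly $6$ and in which every edge lies in a triangle, padded by isolated vertices so that it sits inside $[n]$. A natural candidate is a single book or a small union of triangles sharing structure; here I would simply search among graphs with few edges for one achieving excess $6$ (for instance, two edge-disjoint triangles sharing a vertex give $e=6$, $t=2$, excess $4$; so I would instead take a configuration tuned to give excess exactly $6$, e.g.\ three triangles glued appropriately). The upper end, corresponding to $d=(n-3)\lceil(n-2)/2\rceil-f(n-2)$, should come from the densest relevant covered graph. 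Recall from the text that the right end of the \Cref{prop:flat} interval corresponds to choosing $\lceil(n-2)/2\rceil$ sets on the lower level with no common superset, i.e.\ a near-perfect matching on the complement; in graph terms this is the covered graph whose triangle count is maximized relative to its edge count. I would verify directly that a vertex-disjoint (or near-disjoint) union of $\lceil(n-2)/2\rceil$ triangles, together with the correction term $f(n-2)$, attains this extreme excess, computing $e$ and $t$ explicitly for the parities $n$ even and $n$ odd.

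\textbf{Filling the interval between the ends.} The heart of the proof is to show that \emph{every} intermediate integer excess is achievable, and this is where I expect the main obstacle. The idea is to start from the dense extremal graph (large $d$) and perform local surgeries that decrease the excess $e-t$ by controlled amounts, each step preserving the property that every edge lies in a triangle. Adding a single new edge to a covered graph in a position that creates exactly one new triangle increases $e$ by $1$ and $t$ by $1$, leaving the excess unchanged; to \emph{decrease} the excess one wants moves that create more triangles than edges, such as adding an edge between two vertices with several common neighbours, or adding a vertex joined into an existing clique. Conversely, to realize the smaller values near $d=6$ one removes edges or contracts the configuration. The delicate point is to guarantee that the achievable excesses form a contiguous run with no gaps: each elementary operation must change $e-t$ by exactly $1$ (or the operations must be composable to cover every intermediate value), while never violating the triangle-cover condition on any surviving edge. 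I would organize this as an explicit family of graphs indexed by the target excess $d$, most cleanly built from a union of triangles on a matching together with a small variable ``tail'' (a path or a growing book) whose excess can be incremented one unit at a time; the function $f(n-2)$ enters precisely as the bookkeeping correction that records how far the tail can be pushed for small $n$. Verifying contiguity across the whole range, and checking the boundary cases $n=6,7,\dots$ by hand or by the small-$n$ table alluded to in the surrounding text, is the part requiring the most care; everything else is routine edge-and-triangle counting.
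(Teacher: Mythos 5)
Your opening translation is exactly the paper's (its Lemma~\ref{lem:graph_AC_correspondence}): maximal antichains in $\binom{[n]}{2}\cup\binom{[n]}{3}$ correspond to T-graphs, with size $\binom{n}{2}-e(G)+t(G)$. But from there the proposal contains a genuine error that derails the whole plan. The right end of the target interval is $\binom{n}{3}-(n-3)\left\lceil\frac{n-2}{2}\right\rceil+f(n-2)$, which is near $\binom{n}{3}$, so the excess $d=e(G)-t(G)$ must run from $+6$ down to roughly $-\binom{n}{3}$; your stated range ``$6\geq d\geq(n-3)\left\lceil\frac{n-2}{2}\right\rceil-f(n-2)$'' is contradictory (both bounds positive, the lower exceeding the upper) and shows the two reference points $\binom{n}{2}$ and $\binom{n}{3}$ have been conflated. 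The same confusion infects your extremal construction: a vertex-disjoint union of $\left\lceil\frac{n-2}{2}\right\rceil$ triangles has $e=3\left\lceil\frac{n-2}{2}\right\rceil$, $t=\left\lceil\frac{n-2}{2}\right\rceil$, hence size about $\binom{n}{2}-(n-2)$ --- nowhere near the top end. The correct extremal graph is the \emph{complement} of that picture: $K_n$ minus a forest with $\left\lceil\frac{n-2}{2}\right\rceil$ edges and exactly $f(n-2)$ adjacent edge-pairs (the paper's ``properly labeled starter''; the existence of such forests is its Lemma~\ref{lem:j^*_bound} on $i^*(n)$), whose triangle count is cubic in $n$.

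The second gap is the filling mechanism, where you yourself flag the difficulty but offer no workable move. In the dense regime there are no natural unit-step surgeries: deleting one edge incident to a dominating vertex changes the antichain size by $n-l-3$ or $n-l-2$ (the paper's Lemma~\ref{lem:move_sequence}), i.e.\ by $\Theta(n)$, not by $1$. The paper resolves this with two ideas your sketch lacks: (a) it builds $i^*(n)+1\geq f(n-2)+1$ parallel starters whose sizes form a unit-step run of length exceeding the deletion step-size, so that the arithmetic-progression-like sequences obtained by successively deleting the edges $\{n,n-1\},\dots,\{n,3\}$ interleave into one contiguous interval at the top (Lemma~\ref{lem:top_row}), of length only $\Theta(n^2)$; and (b) --- essentially --- it covers the remaining $\Theta(n^3)$-length stretch down to $\binom{n}{2}-6$ by induction on $n$ via the recursion $S(n,2)\supseteq S(n-1,2)+(n-1)$ from~(\ref{eq:adding_isolated_vertex}), checking that the shifted inductive interval overlaps the top interval. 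Your plan has no counterpart to this induction, and without it you would need an explicit graph family realizing $\Theta(n^3)$ consecutive sizes by moves of controlled increment, which the operations you describe do not provide. (Your bottom-end gadget is fine in spirit: three disjoint triangles give excess $6$ for $n\geq 9$, though for $n\in\{6,7,8\}$ you need a $6$-vertex configuration such as the paper's $\mathcal F=\{123,124,356,456\}$.)
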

For the lower end of the interval, a maximal antichain of size $\binom{n}{2}-6$ is $\mathcal A=\mathcal F\cup\binom{[n]}{2}\setminus\Delta\mathcal F$ where $\mathcal F=\{123,124,356,456\}$. For the upper end of the interval, a maximal antichain of size $\binom{n}{3}-(n-3)\left\lceil\frac{n-2}{2}\right\rceil+f(n-2)$ is obtained by taking as $2$-sets the edges of a forest on $n$ vertices with $\left\lceil\frac{n-2}{2}\right\rceil$ edges with exactly $f(n-2)$ pairs of adjacent edges (the existence of these forests is the contents of \Cref{lem:j^*_bound}). For example, the last forest in \Cref{fig:graphs_13_11_d} corresponds to a maximal antichain $\mathcal A\subseteq\binom{[13]}{2}\cup\binom{[13]}{3}$ with six 2-sets and size $\abs{A}=\binom{13}{3}-10\times 6+f(11)=\binom{13}{3}-50$. \Cref{lem:base_case} states that all integers in between are also obtained as sizes of flat maximal antichains. 

In~\cite{Gruettmueller2009} it was observed that the sets $S(n,2)$ can be characterized in terms of graphs with the property that every edge is contained in a triangle. We call such graphs \emph{T-graphs}. For a graph $G$, let $e(G)$ be the number of edges, and let $t(G)$ be the number of triangles. Moreover, let $\overline G$ denote the complement, that is, the graph with the same vertex set as $G$, where two vertices are adjacent in $\overline G$ if and only if they are non-adjacent in $G$.
\begin{lemma}\label{lem:graph_AC_correspondence}
  $S(n,2)=\left\{t(G)+e\left(\overline G\right)\,:\,\text{$G$ is a T-graph on $n$ vertices.}\right\}$.
\end{lemma}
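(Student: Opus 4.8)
The plan is to establish the claimed equality by exhibiting an explicit bijection between flat maximal antichains $\mathcal A\subseteq\binom{[n]}{2}\cup\binom{[n]}{3}$ and T-graphs on $n$ vertices, and then verifying that the size of $\mathcal A$ equals $t(G)+e(\overline G)$ under this correspondence. Recall from the high-level overview that such a flat antichain is determined by its collection of $3$-sets $\mathcal F$ via $\mathcal A=\mathcal F\cup\left(\binom{[n]}{2}\setminus\Delta\mathcal F\right)$, and that maximality is equivalent to $\Delta A\not\subseteq\Delta\mathcal F$ for every $3$-set $A\notin\mathcal F$. The natural object to attach to $\mathcal A$ is the graph $G$ whose edge set is exactly the \emph{shadow} $\Delta\mathcal F\subseteq\binom{[n]}{2}$, i.e.\ the set of $2$-sets that are \emph{covered} by some $3$-set of the antichain. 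Under this identification, the $3$-sets of $\mathcal A$ are precisely triples all of whose pairs lie in $G$, that is, triangles of $G$, while the $2$-sets of $\mathcal A$ are exactly $\binom{[n]}{2}\setminus E(G)=E(\overline G)$.

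First I would make this correspondence precise in the forward direction. Given a flat maximal antichain $\mathcal A=\mathcal A_1\cup\mathcal A_2$ with $\mathcal A_1\subseteq\binom{[n]}{2}$ and $\mathcal A_2\subseteq\binom{[n]}{3}$, set $G$ to be the graph with $E(G)=\Delta\mathcal A_2$. Then $\mathcal A_1=\binom{[n]}{2}\setminus E(G)=E(\overline G)$, so $\abs{\mathcal A_1}=e(\overline G)$. For the $3$-sets, I claim $\mathcal A_2$ equals the set of triangles of $G$. One inclusion is immediate: every $A\in\mathcal A_2$ has all three of its $2$-subsets in $\Delta\mathcal A_2=E(G)$, hence is a triangle. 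For the reverse inclusion I use maximality: if $T$ is a triangle of $G$ not in $\mathcal A_2$, then all three edges of $T$ lie in $E(G)=\Delta\mathcal A_2$, i.e.\ $\Delta T\subseteq\Delta\mathcal A_2$, contradicting the maximality criterion $\Delta A\not\subseteq\Delta\mathcal A_2$ for $A\notin\mathcal A_2$. Hence $\mathcal A_2$ is exactly the triangle set, so $\abs{\mathcal A_2}=t(G)$ and $\abs{\mathcal A}=t(G)+e(\overline G)$. It remains to check that $G$ is a T-graph: every edge of $G$ lies in $\Delta\mathcal A_2$, hence is a $2$-subset of some $3$-set $A\in\mathcal A_2$; since $A$ is a triangle of $G$, that edge is contained in a triangle.

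Next I would handle the reverse direction to get the opposite inclusion of sets. Starting from an arbitrary T-graph $G$ on $n$ vertices, define $\mathcal A_2$ to be the family of triangles of $G$ and $\mathcal A_1=E(\overline G)$, and set $\mathcal A=\mathcal A_1\cup\mathcal A_2$. I must verify that $\mathcal A$ is a flat maximal antichain. Using the maximality characterization recalled in the overview, namely that $\mathcal A_1\cup\mathcal A_2$ is a maximal antichain iff $\mathcal A_1=\binom{[n]}{2}\setminus\Delta\mathcal A_2$ and $\mathcal A_2=\binom{[n]}{3}\setminus\nabla\mathcal A_1$, I would check both identities. For the first, $\Delta\mathcal A_2$ is the set of all edges lying in some triangle, which by the T-graph property is all of $E(G)$, so $\binom{[n]}{2}\setminus\Delta\mathcal A_2=E(\overline G)=\mathcal A_1$. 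For the second, a $3$-set $A$ lies in $\nabla\mathcal A_1$ iff it contains some non-edge of $G$, i.e.\ iff $A$ is \emph{not} a triangle; hence $\binom{[n]}{3}\setminus\nabla\mathcal A_1$ is exactly the triangle set $\mathcal A_2$. Thus $\mathcal A$ is a flat maximal antichain of size $t(G)+e(\overline G)$, giving the containment $\supseteq$, while the forward direction gives $\subseteq$.

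The argument is essentially a clean unwinding of definitions, so I do not anticipate a serious obstacle; the only subtle point is the careful use of the maximality criterion in both directions, and in particular the role of the T-graph hypothesis, which is exactly what makes $\Delta\mathcal A_2=E(G)$ rather than a proper subset. The potential pitfall is the degenerate case where $G$ has an isolated edge (an edge in no triangle): such a graph is not a T-graph, and correspondingly the antichain built from its triangles would fail maximality because that edge could be added. Keeping the T-graph condition front and center throughout, and noting that it is precisely equivalent to the condition $\Delta\mathcal A_2=E(G)$, is the heart of the matter. I would present the two directions together by emphasizing that $G\mapsto\{E(\overline G)\text{-edges}\}\cup\{\text{triangles of }G\}$ and $\mathcal A\mapsto(\text{graph with edge set }\Delta\mathcal A_2)$ are mutually inverse maps between T-graphs and flat maximal antichains, which yields the stated equality of size-sets at once.
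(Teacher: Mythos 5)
Your proposal is correct and follows essentially the same route as the paper's proof, which sets up exactly the same correspondence (triangles of $G$ as the $3$-sets, non-edges as the $2$-sets, and conversely $E(G)=\binom{[n]}{2}\setminus\mathcal A$, which by maximality coincides with your $\Delta\mathcal A_2$). The paper states this in two sentences and leaves the verification implicit, whereas you carefully check both inclusions via the criterion $\mathcal A_1=\binom{[n]}{2}\setminus\Delta\mathcal A_2$, $\mathcal A_2=\binom{[n]}{3}\setminus\nabla\mathcal A_1$ — a faithful and complete unwinding of the same argument.
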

\begin{proof}
  Let $G$ be a T-graph. We can take the triangles of $G$ as the $3$-sets of the antichain and the non-edges as the $2$-sets to obtain a maximal antichain of size $t(G)+e\left(\overline G\right)$. Conversely, from a maximal antichain $\mathcal A\subseteq\binom{[n]}{2}\cup\binom{[n]}{3}$, we get the required T-graph by taking $\binom{[n]}{2}\setminus\mathcal A$ as the edge set.
\end{proof}
In the following lemma, we express the size of the antichain corresponding to a T-graph $G$ in terms of $G$ and the line graph $L\left(\overline G\right)$, that is, the graph whose vertices are the edges of $\overline G$ and two of these edges are adjacent in $L\left(\overline G\right)$ if they have a vertex in common.
\begin{lemma}\label{lem:AC_size}
  Let $G$ be a T-graph. The size of the corresponding maximal flat antichain $\mathcal A$ is
  \[\abs{\mathcal A}=\binom{n}{3}-(n-3)e\left(\overline G\right)+e\left(L\left(\overline G\right)\right)-t\left(\overline G\right).\]
\end{lemma}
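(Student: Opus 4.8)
The plan is to express the antichain size $\abs{\mathcal A}=t(G)+e(\overline G)$ from \Cref{lem:graph_AC_correspondence} by rewriting the triangle count $t(G)$ in terms of quantities associated to the complement $\overline G$. The key combinatorial identity is that every triple of vertices falls into exactly one of four classes according to how many edges of $G$ it spans (equivalently, how many edges of $\overline G$ it spans). So first I would write $\binom{n}{3}=t_3+t_2+t_1+t_0$, where $t_i$ counts the triples spanning exactly $i$ edges of $\overline G$; then $t_3=t(\overline G)$ and $t_0=t(G)$ is precisely the triangle count we need. Solving for $t(G)$ gives $t(G)=\binom{n}{3}-t_2-t_1-t(\overline G)$, so the task reduces to identifying $t_1+t_2$ in terms of $e(\overline G)$ and $e(L(\overline G))$.

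The main step is a double-counting argument for triples containing at least one edge of $\overline G$. I would count pairs $(e,v)$ where $e$ is an edge of $\overline G$ and $v$ is a third vertex, giving $(n-2)e(\overline G)$ such incidences; each triple in $t_1$ is counted once and each triple in $t_2$ is counted twice, so $(n-2)e(\overline G)=t_1+2t_2$. Separately, $t_2$ counts triples spanning exactly two edges of $\overline G$, and such a triple corresponds exactly to a pair of adjacent edges in $\overline G$ — that is, to an edge of the line graph $L(\overline G)$ — \emph{minus} the triples spanning all three edges, each of which contributes three adjacent pairs. Thus $e(L(\overline G))=t_2+3t_3=t_2+3t(\overline G)$, giving $t_2=e(L(\overline G))-3t(\overline G)$. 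Substituting back, $t_1=(n-2)e(\overline G)-2t_2=(n-2)e(\overline G)-2e(L(\overline G))+6t(\overline G)$.

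Assembling the pieces, $t_1+t_2=(n-2)e(\overline G)-e(L(\overline G))+3t(\overline G)$, so
\[
t(G)=\binom{n}{3}-(n-2)e(\overline G)+e(L(\overline G))-3t(\overline G)-t(\overline G).
\]
Then $\abs{\mathcal A}=t(G)+e(\overline G)=\binom{n}{3}-(n-3)e(\overline G)+e(L(\overline G))-4t(\overline G)$, which already matches the claimed formula except for the coefficient of $t(\overline G)$; I would recheck the line-graph count, since a triangle in $\overline G$ contributes $\binom{3}{2}=3$ adjacent edge-pairs while the formula's $-t(\overline G)$ suggests the correct bookkeeping is $e(L(\overline G))=t_2+3t(\overline G)$ combined so that the net coefficient collapses to one. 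The one place to be careful — and the likely source of any discrepancy — is exactly this accounting of how triangles of $\overline G$ are counted by $L(\overline G)$, since each triangle of $\overline G$ is also a triple spanning three edges and must not be double-subtracted; I would verify the constant by testing the formula on a small explicit T-graph (for instance the forest examples in \Cref{fig:graphs_13_11_d}, where $\abs{\mathcal A}=\binom{13}{3}-50$ is already recorded) to pin down the correct coefficient.
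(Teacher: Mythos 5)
Your overall route---classify the $\binom{n}{3}$ vertex triples by the number $i$ of edges of $\overline G$ they span and double-count---is essentially the paper's route (the paper compresses the whole computation into one inclusion--exclusion line), but your execution contains a concrete counting error, and your closing self-diagnosis points at the wrong step. The mistake is in the incidence count: in counting pairs $(e,v)$ with $e\in E\left(\overline G\right)$ and $v$ a third vertex, a triple spanning exactly $i$ edges of $\overline G$ is counted $i$ times, so the correct identity is $(n-2)e\left(\overline G\right)=t_1+2t_2+3t_3$, not $t_1+2t_2$; you dropped the threefold contribution of the triangles of $\overline G$. Your line-graph identity $e\left(L\left(\overline G\right)\right)=t_2+3t_3$ is correct exactly as you stated it, so the suggestion to ``recheck the line-graph count'' would send you to the one step that is fine. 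With the corrected incidence count one gets $t_1+t_2+t_3=(n-2)e\left(\overline G\right)-e\left(L\left(\overline G\right)\right)+t_3$, hence
\[t(G)=\binom{n}{3}-(n-2)e\left(\overline G\right)+e\left(L\left(\overline G\right)\right)-t\left(\overline G\right),\]
and adding $e\left(\overline G\right)$ via \Cref{lem:graph_AC_correspondence} gives the claimed formula with coefficient $-1$ on $t\left(\overline G\right)$, not your $-4$.

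Your proposed empirical safeguard would not have caught this either: in the examples you cite (\Cref{fig:graphs_13_11_d}) the complement $\overline G$ is a forest, so $t\left(\overline G\right)=0$ and the formulas with coefficients $-1$ and $-4$ agree identically; no triangle-free-complement example can pin down that coefficient. To test it you need a T-graph whose complement contains a triangle, e.g.\ $\overline G$ equal to a triangle on $\{1,2,3\}$ plus $n-3$ isolated vertices (a T-graph for $n\geq 5$): direct inclusion--exclusion gives $t(G)=\binom{n}{3}-3(n-2)+3-1=\binom{n}{3}-3n+8$, so $\abs{\mathcal A}=t(G)+3=\binom{n}{3}-3(n-3)+3-1$, confirming the coefficient $-1$, while your derived formula is off by $3$. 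So the idea of your proof is sound and coincides with the paper's, but as written it proves a false identity and the repair plan (both the diagnosis and the test case) would not converge to the correct statement.
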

\begin{proof}
  This follows by the principle of inclusion and exclusion:
  \[\lvert\mathcal A\rvert=t(G)+e\left(\overline G\right)=\binom{n}{3}-e\left(\overline G\right)(n-2)+e\left(L\left(\overline G\right)\right)-t\left(\overline G\right)+e\left(\overline G\right).\qedhere\]
\end{proof}
\Cref{lem:graph_AC_correspondence,lem:AC_size} provide a correspondence between the elements of $S(n,2)$ and certain parameters of T-graphs. As a consequence, the proof of \Cref{lem:base_case} can be done in terms of T-graphs. Our strategy for proving \Cref{lem:base_case} is summarized as follows.
\begin{itemize}
\item We start with a collection of T-graphs $G_j$, $j=0,1,\dots,f(n-2)$ on vertex set $[n]$, which satisfy $e\left(\overline G_j\right)=\left\lceil(n-2)/2\right\rceil$, $t\left(\overline G_j\right)=0$ and $e\left(L\left(\overline G_j\right)\right)=j$. By \Cref{lem:AC_size}, the corresponding maximal antichains have sizes in the interval
  \[\left[\binom{n}{3}-(n-3)\left\lceil\frac{n-2}{2}\right\rceil,\,\binom{n}{3}-(n-3)\left\lceil\frac{n-2}{2}\right\rceil+f(n-2)\right].\]
\item In every $G_j$, vertex $n$ has degree $n-1$, and we delete edges $\{n,n-1\}$, $\{n,n-2\}$,\dots,$\{n,3\}$ one by one. These edge deletions preserve the property that every edge is contained in a triangle, and we can express the effect on the size of the corresponding maximal antichain: we add one 2-set (the deleted edge) and we remove all 3-sets containing the deleted edge. This gives a collection of overlapping short intervals in $S(n,2)$ whose union is a longer interval.
\item We conclude the proof of \Cref{lem:base_case} using induction on $n$ and the recursion~(\ref{eq:adding_isolated_vertex}).
\end{itemize}
For the rest of the subsection we argue in terms of T-graphs without explicitly stating the translation into sizes of maximal antichains which is provided by \Cref{lem:graph_AC_correspondence,lem:AC_size}. We start by defining the almost complete graphs $G_j$ which will serve as the starting points of the construction.
\begin{definition}\label{def:2-starter}
  An $n$-\emph{starter} is a graph $G$ on $n$ vertices whose complement $\overline G$ is a forest with $\left\lceil\frac{n-2}{2}\right\rceil$ edges. 
\end{definition}
An $n$-starter $G$ is a T-graph with triangle-free complement, and by \Cref{lem:AC_size}, the corresponding maximal antichain has size $\binom{n}{3}-(n-3)\left\lceil\frac{n-2}{2}\right\rceil+e\left(L\left(\overline G\right)\right)$. By varying $e\left(L\left(\overline G\right)\right)$ we obtain the final part of the interval in \Cref{prop:large_flat}.
\begin{example}\label{ex:matching}
  The graph $G$ which is obtained by removing the matching 
  \[E\left(\overline G\right)=\left\{\{n-1,1\},\{n-2,2\},\dots,\left\{\left\lceil\frac{n+1}{2}\right\rceil,\left\lceil\frac{n-2}{2}\right\rceil\right\}\right\}\]
  from $K_n$ is an $n$-starter with $e\left(L\left(\overline G\right)\right)=0$, and the corresponding antichain has size $\binom{n}{3}-(n-3)\left\lceil\frac{n-2}{2}\right\rceil$.
\end{example}
\begin{example}
  \Cref{fig:intersection_graph} shows the complement $\overline G$ of a starter for $n=14$ and its line graph. The corresponding antichain has size $\binom{14}{3}-11\times 6+9=\binom{14}{3}-57$.
\begin{figure}[htb]
  \begin{minipage}{.49\linewidth}
    \centering
    \begin{tikzpicture}
      \foreach \a in {1,2,...,14} {
        \node[circle,fill=black,outer sep=1pt,inner sep=1pt,label=\a*360/14:$\a$] (\a) at (\a*360/14:1.5cm) {};
      }
      \draw[thick] (13) -- (10);
      \draw[thick] (12) -- (10);
      \draw[thick] (11) -- (10);
      \draw[thick] (10) -- (1);
      \draw[thick] (9) -- (1);
      \draw[thick] (8) -- (1);
    \end{tikzpicture}
  \end{minipage}\hfill
  \begin{minipage}{.49\linewidth}
  \centering
  \begin{tikzpicture}[every node/.style={circle,fill=black,outer sep=1pt,inner sep=1pt}]
      \node (v1) at (-1,0) {};
      \node (v2) at (0,1) {};
      \node (v3) at (0,-1) {};
      \node (v4) at (1,0) {};
      \node (v5) at (3,1) {};
      \node (v6) at (3,-1) {};
      \draw[thick] (v1) -- (v2) -- (v3) -- (v4) -- (v1) -- (v3);
      \draw[thick] (v2) -- (v4) -- (v5) -- (v6) -- (v4);
    \end{tikzpicture}
  \end{minipage}
  \caption{The complement $\overline G$ of a starter $G$ for $n=14$ and the line graph $L(\overline G)$.}\label{fig:intersection_graph}
\end{figure}
\end{example}
By \Cref{ex:matching} there is always a starter $G$ with $e\left(L\left(\overline G\right)\right)=0$. In view of the comment after \Cref{def:2-starter} we would like to have a family of starters such that $e\left(L\left(\overline G\right)\right)$ varies over an interval starting at $0$. We now define the maximal length of such an interval.
\begin{definition}
  Let $i^*(n)$ be the largest integer such that for every $i\in\{0,1,\dots,i^*(n)\}$ there exists an $n$-starter $G$ with $e\left(L\left(\overline G\right)\right)=i$.
\end{definition}
For our purpose (the proof of \Cref{lem:base_case}) we only need a crude lower bound on $i^*(n)$, which is provided in the next two lemmas.
\begin{lemma}\label{lem:istar_monotonicity}
  \begin{enumerate}[(i)]
  \item For every integer $n\geq 3$, $i^*(n+1)\geq i^*(n)$ with equality if $n$ is odd.
  \item For integers $n=2k-1\geq 5$, $t\leq k-1$, and an $(n-2t)$-starter $G$, there is an $n$-starter $G'$ with $e\left(L\left(\overline{G'}\right)\right)=e\left(L\left(\overline{G}\right)\right)+\binom{t+1}{2}$.
  \end{enumerate}
\end{lemma}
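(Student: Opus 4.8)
The plan is to work entirely with the complement forests, using the identity $e\!\left(L(\overline G)\right)=\sum_{v}\binom{\deg(v)}{2}$, which counts the pairs of incident edges of $\overline G$. All constructions modify $\overline G$ by only three local moves: adjoining an isolated vertex, adjoining a pendant edge (a new vertex joined to an existing one), and deleting an isolated vertex. The key is that each such move keeps the graph a forest of a prescribed size — so that its complement is again a starter — while changing $e\!\left(L(\overline G)\right)$ in a controlled way. I will also use the elementary fact that a forest on $N$ vertices with $m$ edges has at least $N-2m$ isolated vertices, so whenever $N-2m\geq 1$ an isolated vertex is available. Recall that being a starter is a condition only on the complement, and that every starter is automatically a $T$-graph by the remark following \Cref{def:2-starter}.

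For part (i), write $A_n=\left\{e\!\left(L(\overline G)\right)\,:\,G\text{ is an }n\text{-starter}\right\}$, so that $i^*(n)$ is precisely the length of the initial run $\{0,1,\dots,i^*(n)\}$ contained in $A_n$; hence it suffices to compare $A_n$ and $A_{n+1}$. First I would prove $A_n\subseteq A_{n+1}$, giving $i^*(n+1)\geq i^*(n)$. If $n$ is odd then $\lceil(n-2)/2\rceil=\lceil(n-1)/2\rceil$, so the relevant forests have the same number of edges, and simply adjoining one isolated vertex to $\overline G$ produces a forest on $n+1$ vertices with the same edge count and the same value of $e(L)$. If $n$ is even the edge count must increase by one, so I would instead join a new vertex to one of the (at least two) isolated vertices of $\overline G$; this pendant edge raises no degree above $1$, so $e(L)$ is again unchanged. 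For the reverse inclusion when $n$ is odd, an $(n+1)$-starter's forest lives on $n+1$ vertices with the same edge count as an $n$-starter's forest, hence has at least two isolated vertices; deleting one yields a forest on $n$ vertices realizing the same $e(L)$ value, so $A_{n+1}\subseteq A_n$. Combining the inclusions gives $A_n=A_{n+1}$ and therefore $i^*(n)=i^*(n+1)$.

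For part (ii), set $k'=k-t$, so an $(n-2t)$-starter $G$ has $\overline G$ a forest on $2k'-1$ vertices with $k'-1$ edges, while the target needs a forest on $2k-1$ vertices with $k-1$ edges, i.e.\ $2t$ more vertices and $t$ more edges. I would pick a leaf $v$ of $\overline G$, attach $t$ new pendant vertices to $v$, and adjoin $t$ further isolated vertices. The result $\overline{G'}$ is a forest on $(2k'-1)+2t=2k-1$ vertices with $(k'-1)+t=k-1$ edges, so its complement is an $n$-starter; only $\deg(v)$ changes, from $1$ to $t+1$, whence
\[
e\!\left(L(\overline{G'})\right)-e\!\left(L(\overline G)\right)=\binom{t+1}{2}-\binom{1}{2}=\binom{t+1}{2}.
\]
The choice of a degree-$1$ vertex is forced, since $\binom{d+t}{2}-\binom{d}{2}=dt+\binom{t}{2}$ equals $\binom{t+1}{2}$ only for $d=1$.

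The main obstacle I anticipate is the existence of the leaf $v$ in part (ii): it is available exactly when $\overline G$ has an edge, i.e.\ when $k'\geq 2$, equivalently $t\leq k-2$. The boundary case $t=k-1$ forces $\overline G$ to be a single vertex, and then the target value $\binom{k}{2}$ exceeds the maximum $\binom{k-1}{2}$ of $e(L)$ over all forests with $k-1$ edges; so the statement is genuinely used for $t\leq k-2$, and I would either restrict to that range or dispose of the degenerate case separately. Apart from this, the work is the routine bookkeeping that each local move preserves the forest structure and the prescribed edge count, which I have sketched above.
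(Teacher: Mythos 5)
Your proof is correct and is essentially the paper's own argument: part (ii) uses the identical construction (attach $t$ pendant edges to a leaf of the forest $\overline G$ and adjoin $t$ isolated vertices, shifting $e\left(L\left(\overline{G'}\right)\right)$ by $\binom{t+1}{2}$), and your isolated-vertex/pendant-edge moves supply exactly the details behind part (i), which the paper dismisses as obvious. Your boundary remark is also accurate: for $t=k-1$ the forest $\overline G$ is a single vertex with no leaf, and since every $(2k-1)$-starter $G'$ satisfies $e\left(L\left(\overline{G'}\right)\right)\leq\binom{k-1}{2}<\binom{k}{2}$, the conclusion genuinely fails there, so the hypothesis should read $t\leq k-2$ --- a harmless slip in the paper's statement, as all of its applications have $n-2t\geq 3$.
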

\begin{proof} Part (i) is obvious. For part (ii), let $G$ be a $(2k-2t-1)$-starter. We obtain the required $(2k-1)$-starter $G'$ by adding $2t$ new vertices and joining a leaf of $G$ to $t$ of the new vertices.
\end{proof}
\begin{lemma}\label{lem:j^*_bound}
  \[i^*(n)\geq 
    \begin{cases}
      0 & \text{for }n\in\{3,4\},\\
      1 & \text{for }n\in\{5,6\},\\
      3 & \text{for }n\in\{7,8\},\\
      4 & \text{for }n\in\{9,10\},\\
      7 & \text{for }n\in\{11,12\},\\
      n-2 & \text{for odd }n\geq 13,\\
      n-3 & \text{for even }n\geq 14.
    \end{cases}
    \]
    In particular, $i^*(n)\geq f(n-2)$ for all $n\geq 3$.
\end{lemma}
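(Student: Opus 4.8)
My plan is to translate the statement entirely into the language of forests, exploiting the correspondence already set up: for a starter $G$, the complement $\overline G$ is a forest with $m:=\left\lceil(n-2)/2\right\rceil$ edges, and $e(L(\overline G))=\sum_{v}\binom{\deg_{\overline G}(v)}{2}$ counts the pairs of adjacent edges of $\overline G$. I will call this number the \emph{value} of $G$ and prove that every integer from $0$ up to the claimed bound $B(n)$ is a value. Since $2m\leq n$, a matching realizes value $0$, so $i^*(n)\geq 0$ always, and the number of vertices spent by any of the building blocks below never exceeds $n$ (the remaining vertices stay isolated), so the only genuine constraint is the edge budget $m$. By \Cref{lem:istar_monotonicity}(i), $i^*$ is nondecreasing and $i^*(2k)\geq i^*(2k-1)$; a direct check shows the stated bound for each even $n$ equals the bound for the preceding odd $n-1$, so it suffices to prove the bound for odd $n$.

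For the odd base cases $n\leq 11$ I would simply exhibit forests for each target value, using four building blocks (each supplemented by matching edges): a cherry $P_3$ contributes $1$ at a cost of $2$ edges, a star $K_{1,s}$ contributes $\binom{s}{2}$ at a cost of $s$ edges, and a double star with centers of degrees $a,b$ contributes $\binom{a}{2}+\binom{b}{2}$ at a cost of $a+b-1$ edges. Listing these realizes values $\{0,1\}$ for $n=5$, $\{0,\dots,3\}$ for $n=7$, $\{0,\dots,4\}$ for $n=9$, and $\{0,\dots,7\}$ for $n=11$, which gives the first five lines of the bound.

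The threshold cases $n=13,15,17$ are handled next. For $n=13$ ($m=6$) I would realize $\{0,\dots,10\}$ by the forests already drawn in \Cref{fig:graphs_13_11_a,fig:graphs_13_11_b,fig:graphs_13_11_c,fig:graphs_13_11_d} and realize the top value $11=\binom{5}{2}+\binom{2}{2}$ by a double star with centers of degrees $5$ and $2$, giving $i^*(13)\geq 11$. For $n=15$, applying \Cref{lem:istar_monotonicity}(ii) with $t=1$ to the $13$-starters shifts $[0,11]$ to $[1,12]$, so with the matching I cover $[0,12]$, and the single remaining value $13=\binom{5}{2}+\binom{3}{2}$ (a double star, $7$ edges) gives $i^*(15)\geq 13$. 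For $n=17$ the same $t=1$ shift applied to the $15$-starters yields $[0,14]$, and the top value $15=\binom{6}{2}$ (the star $K_{1,6}$) gives $i^*(17)\geq 15$. Finally, for odd $n\geq 19$ I would argue by induction: applying \Cref{lem:istar_monotonicity}(ii) with $t=1$ lifts the interval $[0,n-4]$ (available since $n-2\geq 13$ gives $i^*(n-2)\geq n-4$) to $[1,n-3]$, hence with the matching to $[0,n-3]$; applying it with $t=3$ (legitimate as $3\leq(n-1)/2$) lifts the interval $[0,n-8]$ (available since $n-6\geq 13$ gives $i^*(n-6)\geq n-8$) by $\binom{4}{2}=6$ to $[6,n-2]$. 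Because $6\leq n-3$, the union is $[0,n-2]$, so $i^*(n)\geq n-2$. The even bounds then follow from part (i), and since in every case the proved bound is at least $f(n-2)$ (they coincide for even $n\geq 14$ and the proved bound exceeds $f(n-2)$ for odd $n\geq 13$), the \emph{In particular} assertion is immediate.

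The crux, and the step I expect to cause the most trouble, is the top of the interval. The shift $t=1$ alone only yields $i^*(n)\geq i^*(n-2)+1=n-3$, one short of the target $n-2$; the missing top value must be supplied by a second application of \Cref{lem:istar_monotonicity}(ii) at a larger $t$, and the delicate point is that the short interval it produces, namely $\bigl[\binom{t+1}{2},\,i^*(n-2t)+\binom{t+1}{2}\bigr]$, must simultaneously reach up to $n-2$ and overlap $[0,n-3]$ with no gap. For $n\geq 19$ the choice $t=3$ does both, but for $n\in\{15,17\}$ the argument $n-6$ still lies below the threshold $13$, so no admissible $t$ reaches $n-2$ through a recursion alone; recognizing that exactly these two sizes require the explicit top-value double star / star constructions above is the main bookkeeping subtlety of the proof.
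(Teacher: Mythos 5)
Your overall architecture matches the paper's: reduce to odd $n$ via \Cref{lem:istar_monotonicity}(i), settle small odd cases by explicit forests, treat $n\in\{13,15,17\}$ as thresholds, and for odd $n\geq 19$ combine applications of \Cref{lem:istar_monotonicity}(ii) (you use $t\in\{1,3\}$; the paper uses $t=3$ together with monotonicity for the low values, and at the thresholds it reaches the top values with $t\in\{4,5\}$ recursions rather than your explicit extremal trees). These variations are sound: the edge budgets $\left\lceil\frac{n-2}{2}\right\rceil$, the vertex counts, the admissibility condition $t\leq k-1$, and the interval overlap ($6\leq n-3$ for $n\geq 19$) all check out, and your explicit top-value forests --- double stars with center degrees $(5,2)$ at $n=13$ and $(5,3)$ at $n=15$, and $K_{1,6}$ plus two matching edges at $n=17$ --- have exactly the right number of edges and values $11$, $13$, $15$.

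There is, however, one concrete false step: the base case $n=11$. There the complement forest must have exactly $5$ edges, and you claim that disjoint unions of matching edges, cherries, stars and double stars realize every value in $\{0,\dots,7\}$. Value $5$ is not realizable by this toolkit: a single block of value $5$ would need $\sum_v\binom{d_v}{2}=5$, which stars give as $\binom{s}{2}\in\{1,3,6,10,\dots\}$ and double stars as $\binom{a}{2}+\binom{b}{2}\in\{2,4,5\text{? no: }2,4,6,7,\dots\}$ --- neither family attains $5$ --- and every disjoint combination of block values summing to $5$ (namely $4+1$, $3+2$, or $3+1+1$) costs at least $6$ edges, exceeding the budget. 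In fact the only $5$-edge forest of value $5$ is a tree with degree sequence $(3,2,2,1,1,1)$, i.e.\ a $P_5$ with a pendant edge at its center, which is neither a star nor a double star nor a disjoint union of your blocks. The gap is easily repaired: either add this spider to your list of building blocks, or do what the paper does and apply \Cref{lem:istar_monotonicity}(ii) with $t=2$ to the $7$-starter whose complement is $P_4$ (value $2$, shifted by $\binom{3}{2}=3$ to value $5$). With that single correction your proof is complete and yields the same bounds as the paper's.
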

\begin{proof}
  By \Cref{lem:istar_monotonicity}(i), we can focus on the case that $n$ is odd. For $n=3$, the starter $G$ with $E\left(\overline G\right)=\{\{1,2\}\}$ witnesses $i^*(3)\geq 0$. 
  \begin{description}
  \item[$n=5$] \Cref{lem:istar_monotonicity}(ii) with $t=1$ gives a $5$-starter with $e\left(L\left(\overline G\right)\right)=1$.
  \item[$n=7$] \Cref{lem:istar_monotonicity}(ii) with $t=1$ gives a starter with $e\left(L\left(\overline G\right)\right)=2$, and the complement of a $3$-star (plus 3 isolated vertices) is a $7$-starter with $e\left(\overline G\right)=3$.
  \item[$n=9$] \Cref{lem:istar_monotonicity}(ii) with $t=1$ gives a $9$-starter with $e\left(L\left(\overline G\right)\right)=4$.
  \item[$n=11$] \Cref{lem:istar_monotonicity}(ii) with $t=2$ gives $11$-starters $G_i$ with $e\left(L\left(\overline{G_i}\right)\right)=i$ for all $i\in\{3,4,5,6\}$, and together with the $11$-starter whose complement is a tree with degrees $4,2,1,1,1,1$ (plus 5 isolated vertices) we conclude $i^*(11)\geq 7$.
  \item[$n=13$] \Cref{lem:istar_monotonicity}(ii) with $t=3$ gives $13$-starters $G_i$ with $e\left(L\left(\overline{G_i}\right)\right)=i$ for $i\in\{6,7,8,9\}$, and with $t=4$ we cover $i\in\{10,11\}$. 
  \item[$n=15$] \Cref{lem:istar_monotonicity}(ii) with $t=4$ gives $15$-starters $G_i$ with $e\left(L\left(\overline{G_i}\right)\right)=i$ for $i\in\{10,11,12,13\}$
  \item[$n=17$] \Cref{lem:istar_monotonicity}(ii) with $t=4$ gives $17$-starters $G_i$ with $e\left(L\left(\overline{G_i}\right)\right)=i$ for $i\in\{10,\dots,14\}$, and with $t=5$ we cover $i\in\{15,16,17,18\}$.
  \item[$n\geq 19$] \Cref{lem:istar_monotonicity}(ii) with $t=3$ gives $n$-starters $G_i$ with $e\left(L\left(\overline{G_i}\right)\right)=i$ for $i\in\left\{6,\dots,i^*(n-6)+6\right\}$, and this concludes the proof because $i^*(n-6)\geq n-8$ by induction on $n$.  \qedhere
  \end{description}
\end{proof}
As a consequence of \Cref{lem:j^*_bound}, to prove \Cref{lem:base_case} it suffices to verify
\[\left[\binom{n}{2}-6,\,\binom{n}{3}-(n-3)\left\lceil\frac{n-2}{2}\right\rceil+i^*(n)\right]\subseteq S(n,2)\]
So far we have established the existence of starters corresponding to maximal antichains $\mathcal A\subseteq\binom{[n]}{2}\cup\binom{[n]}{3}$ with all sizes of the form
\[\binom{n}{3}-(n-3)\left\lceil\frac{n-2}{2}\right\rceil+i\qquad\text{for }i=0,1,2,\dots,i^*(n).\]
We can always label the vertices of an $n$-starter $G$ in such a way that
\begin{enumerate}[(i)]
\item $n$ is an isolated vertex in $\overline G$ (that is, $\{i,n\}\in E(G)$ for all $i\in[n-1]$),
\item for $i=1,2,\dots,\left\lceil\frac{n-2}{2}\right\rceil$, the vertex $n-i$ has degree $1$ in $\overline{G}-\{n,n-1,\dots,n-i+1\}$.
\end{enumerate}
Let us call an $n$-starter satisfying (i) and (ii) \emph{properly labeled}. 
The second condition (ii) says that the edge set of $\overline{G}$ has the form $E\left(\overline{G}\right)=\{\{n-l,b_l\}\,:\,l=1,\dots,\left\lceil\frac{n-2}{2}\right\rceil\}$ with $b_l<n-l$ for every $l$. In this setup, we will now delete the edges $\{n,n-1\},\dots\{n,3\}$ from $G$ one by one, and this shifts the sizes of the corresponding maximal antichains, as illustrated in the following example.
\begin{example}
  For $n=14$, the starters for $i=0,1,\dots,i^*(14)=11$ correspond to maximal antichains with sizes in the interval $[\left[\binom{14}{3}-66,\binom{14}{3}-55\right]$. In \Cref{fig:shift}, it is illustrated how deleting the edges $\{14,13\}$, $\{14,12\}$,\dots,$\{14,3\}$ shifts the size of the corresponding antichain.
  \begin{figure}[htb]
  \begin{minipage}{.32\linewidth}
    \centering
  \begin{tikzpicture}[scale=.9]
      \foreach \a in {1,2,...,14} {
        \node[circle,fill=black,outer sep=1pt,inner sep=1pt,label=\a*360/14:{\small $\a$}] (\a) at (\a*360/14:2cm) {};
      }
      \draw[thick] (13) -- (10);
      \draw[thick] (12) -- (10);
      \draw[thick] (11) -- (10);
      \draw[thick] (10) -- (1);
      \draw[thick] (9) -- (1);
      \draw[thick] (8) -- (1);
      \node at (0,-3) {$\binom{14}{3}-66+9=\binom{14}{3}-57$};
  \end{tikzpicture}  
\end{minipage}\hfill
\begin{minipage}{.32\linewidth}
    \centering
  \begin{tikzpicture}[scale=.9]
      \foreach \a in {1,2,...,14} {
        \node[circle,fill=black,outer sep=1pt,inner sep=1pt,label=\a*360/14:{\small $\a$}] (\a) at (\a*360/14:2cm) {};
      }
      \draw[thick] (13) -- (10);
      \draw[thick] (12) -- (10);
      \draw[thick] (11) -- (10);
      \draw[thick] (10) -- (1);
      \draw[thick] (9) -- (1);
      \draw[thick] (8) -- (1);
      \draw[thick] (14) -- (13);
      \node at (0,-3) {$\binom{14}{3}-67$};      
    \end{tikzpicture}
  \end{minipage}\hfill
  \begin{minipage}{.32\linewidth}
    \centering
  \begin{tikzpicture}[scale=.9]
      \foreach \a in {1,2,...,14} {
        \node[circle,fill=black,outer sep=1pt,inner sep=1pt,label=\a*360/14:{\small $\a$}] (\a) at (\a*360/14:2cm) {};
      }
      \draw[thick] (13) -- (10);
      \draw[thick] (12) -- (10);
      \draw[thick] (11) -- (10);
      \draw[thick] (10) -- (1);
      \draw[thick] (9) -- (1);
      \draw[thick] (8) -- (1);
      \draw[thick] (14) -- (13);
      \draw[thick] (14) -- (12);
      \node at (0,-3) {$\binom{14}{3}-76$};      
    \end{tikzpicture}    
  \end{minipage}

  \bigskip

  \begin{minipage}{.32\linewidth}
    \centering
  \begin{tikzpicture}[scale=.9]
      \foreach \a in {1,2,...,14} {
        \node[circle,fill=black,outer sep=1pt,inner sep=1pt,label=\a*360/14:{\small $\a$}] (\a) at (\a*360/14:2cm) {};
      }
      \draw[thick] (13) -- (10);
      \draw[thick] (12) -- (10);
      \draw[thick] (11) -- (10);
      \draw[thick] (10) -- (1);
      \draw[thick] (9) -- (1);
      \draw[thick] (8) -- (1);
      \draw[thick] (14) -- (13);
      \draw[thick] (14) -- (12);
      \draw[thick] (14) -- (11);
      \node at (0,-3) {$\binom{14}{3}-84$};      
    \end{tikzpicture}    
  \end{minipage}$\dots$
    \begin{minipage}{.32\linewidth}
    \centering
  \begin{tikzpicture}[scale=.9]
      \foreach \a in {1,2,...,14} {
        \node[circle,fill=black,outer sep=1pt,inner sep=1pt,label=\a*360/14:{\small $\a$}] (\a) at (\a*360/14:2cm) {};
      }
      \draw[thick] (13) -- (10);
      \draw[thick] (12) -- (10);
      \draw[thick] (11) -- (10);
      \draw[thick] (10) -- (1);
      \draw[thick] (9) -- (1);
      \draw[thick] (8) -- (1);
      \draw[thick] (14) -- (13);
      \draw[thick] (14) -- (12);
      \draw[thick] (14) -- (11);
      \draw[thick] (14) -- (10);
      \draw[thick] (14) -- (9);
      \draw[thick] (14) -- (8);
      \draw[thick] (14) -- (7);
      \draw[thick] (14) -- (6);
      \draw[thick] (14) -- (5);
      \draw[thick] (14) -- (4);
      \node at (0,-3) {$\binom{14}{3}-116$};      
    \end{tikzpicture}    
  \end{minipage}\hfill
    \begin{minipage}{.32\linewidth}
    \centering
  \begin{tikzpicture}[scale=.9]
      \foreach \a in {1,2,...,14} {
        \node[circle,fill=black,outer sep=1pt,inner sep=1pt,label=\a*360/14:{\small $\a$}] (\a) at (\a*360/14:2cm) {};
      }
      \draw[thick] (13) -- (10);
      \draw[thick] (12) -- (10);
      \draw[thick] (11) -- (10);
      \draw[thick] (10) -- (1);
      \draw[thick] (9) -- (1);
      \draw[thick] (8) -- (1);
      \draw[thick] (14) -- (13);
      \draw[thick] (14) -- (12);
      \draw[thick] (14) -- (11);
      \draw[thick] (14) -- (10);
      \draw[thick] (14) -- (9);
      \draw[thick] (14) -- (8);
      \draw[thick] (14) -- (7);
      \draw[thick] (14) -- (6);
      \draw[thick] (14) -- (5);
      \draw[thick] (14) -- (4);
      \draw[thick] (14) -- (3);
      \node at (0,-3) {$\binom{14}{3}-117$};      
    \end{tikzpicture}    
    \end{minipage}
    \caption{Deleting the edges $\{n,n-1\},\{n,n-2\},\dots,\{n,3\}$ from a starter leads to graphs corresponding to maximal antichains. The number under each graph is the size of the corresponding maximal antichain.}\label{fig:shift}
\end{figure}
Doing this for all starters, we obtain
\begin{multline*}
  S(14,2)\supseteq\binom{14}{3}-\big([55,66]\cup[65,76]\cup[74,85]\cup[82,93]\cup[89,100]\cup[95,106]\cup[100,111]\cup[105,116]\\
  \cup[109,120]\cup[112,123]\cup[114,125]\cup[115,126]\big)=\binom{14}{3}-[55,126].
\end{multline*}
This interval is the first part of the interval in the statement of \Cref{lem:base_case} for $n=14$. The missing sizes between $\binom{14}{2}-5=86$ and $\binom{14}{3}-126=238$ will come from induction on $n$.
\end{example}
In the next lemma we state precisely how deleting the edges incident with vertex $n$ from a starter affects the size of the corresponding maximal antichain. For a graph $G$ on $n$ vertices, let
\[\phi(G)=\binom{n}{3}-(n-3)e\left(\overline{G}\right)+e\left(L\left(\overline{G}\right)\right)-t\left(\overline{G}\right).\]
By \Cref{lem:AC_size}, if $G$ is a $T$-graph then $\phi(G)$ is the size of the corresponding maximal antichain $\mathcal A$.
\begin{lemma}\label{lem:move_sequence}
  Let $n\geq 6$, and let $G_0$ be a properly labeled $n$-starter. Furthermore, for $l=1,2,\dots,n-3$, let $G_l$ be the graph obtained from $G_0$ by deleting the edges $\{n,n-1\}$, $\{n,n-2\}$,\dots, $\{n,n-l\}$. Then, for every $l\in[n-1]$, $\phi\left(G_l\right)=\phi\left(G_{l-1}\right)-\alpha_l$, where
  \[\alpha_l=
    \begin{cases}
      n-l-3 &\text{for }l=1,2,\dots,\left\lceil\frac{n-2}{2}\right\rceil,\\
      n-l-2 &\text{for }l=\left\lceil\frac{n-2}{2}\right\rceil+1,\dots,n-1.
    \end{cases}
  \]
  Moreover, the graphs $G_1,\dots,G_{n-3}$ are T-graphs.
\end{lemma}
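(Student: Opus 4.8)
The plan is to prove \Cref{lem:move_sequence} by directly computing, for each $l$, how the three quantities $e(\overline{G_l})$, $e(L(\overline{G_l}))$, and $t(\overline{G_l})$ appearing in $\phi$ change when we delete the edge $\{n,n-l\}$ from $G_{l-1}$. Since $G_0$ is a starter, its complement $\overline{G_0}$ is a forest with $\lceil(n-2)/2\rceil$ edges, none of which is incident to $n$ (condition (i) of proper labeling says $n$ is isolated in $\overline{G_0}$). Deleting the edge $\{n,n-l\}$ from $G_{l-1}$ is the same as \emph{adding} the edge $\{n,n-l\}$ to $\overline{G_{l-1}}$. Thus I would track the complement graphs: $\overline{G_l}=\overline{G_{l-1}}+\{n,n-l\}$, and I would compute $\phi(G_{l-1})-\phi(G_l)=(n-3)\Delta_e-\Delta_{e(L)}+\Delta_t$, where $\Delta_e$, $\Delta_{e(L)}$, $\Delta_t$ are the increases in edge count, line-graph edge count, and triangle count of the complement caused by this single edge addition.

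The first step is the easy bookkeeping: adding one edge always gives $\Delta_e=1$, contributing $n-3$ to the drop in $\phi$. The second step computes $\Delta_{e(L)}$: the new edge $\{n,n-l\}$ becomes adjacent in the line graph to every edge of $\overline{G_{l-1}}$ sharing an endpoint with it, so $\Delta_{e(L)}=\deg_{\overline{G_{l-1}}}(n)+\deg_{\overline{G_{l-1}}}(n-l)$. Here condition (ii) of proper labeling is crucial: it guarantees that in $\overline{G_0}$ the vertex $n-l$ has degree exactly $1$ after deleting the higher-indexed vertices, which lets me read off the relevant degrees. In $\overline{G_{l-1}}$ vertex $n$ already has degree $l-1$ (from the previously added edges $\{n,n-1\},\dots,\{n,n-l+1\}$), and I must determine $\deg(n-l)$. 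The third step computes $\Delta_t$: the new edge creates a triangle for each common neighbor of $n$ and $n-l$ in $\overline{G_{l-1}}$.

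The combinatorial heart of the argument is bounding these degree and common-neighbor contributions so that the total $\alpha_l=(n-3)+\Delta_{e(L)}-\Delta_t$ matches the claimed piecewise formula, and the split at $l=\lceil(n-2)/2\rceil$ reflects a change of regime in the proper labeling: for $l$ in the first range, the vertex $n-l$ is a leaf of the forest obtained after peeling off vertices $n,n-1,\dots,n-l+1$, whereas beyond that threshold all forest-vertices have been exhausted and the deleted edges only interact with the star centered at $n$ that we have been building. I expect the main obstacle to be the careful verification that $\deg_{\overline{G_{l-1}}}(n-l)$ and the number of common neighbors behave as needed in each range, and in particular that the triangle count $\Delta_t$ is precisely controlled so that the net value $\alpha_l$ comes out to $n-l-3$ in the first range and $n-l-2$ in the second. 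I would organize this verification around the structure imposed by proper labeling, using condition (ii) to pin down degrees of the forest endpoints and tracking the growing star at $n$ to handle the second range.

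Finally, I would confirm that each $G_l$ with $l\in\{1,\dots,n-3\}$ is a T-graph, i.e.\ that every edge of $G_l$ still lies in a triangle after the deletions. Since $G_0$ is a starter (hence a T-graph) and $n$ remains adjacent to many vertices throughout, I would argue that deleting edges incident only to $n$ cannot destroy the triangle-cover property for edges not involving $n$, and that each surviving edge $\{n,j\}$ with $j\ge n-l+1$ or appropriate index retains a triangle through a common neighbor; the constraint $l\le n-3$ ensures enough edges at $n$ remain for this. The existence of properly labeled starters realizing each value of $e(L(\overline{G}))$ up to $i^*(n)$ is already supplied by \Cref{lem:j^*_bound} and the relabeling remark, so combining those starters with this shifting lemma yields the union of overlapping intervals, completing the pieces needed for \Cref{lem:base_case}.
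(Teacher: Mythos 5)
Your proposal takes essentially the same approach as the paper's proof: you track, for each complement edge $\{n,n-l\}$ added to $\overline{G_{l-1}}$, the increments $\Delta_e=1$, $\Delta_{e(L)}=\deg(n)+\deg(n-l)$ with $\deg(n)=l-1$, and $\Delta_t$ counted by common neighbors, observe the cancellation between the forest-edge contributions to $\Delta_{e(L)}$ and the new triangles, and locate the regime change at $l=\left\lceil\frac{n-2}{2}\right\rceil$ via condition (ii) of proper labeling, exactly as the paper does. One small caution: your claim that deleting edges at $n$ cannot destroy triangle-coverage for edges $\{p,q\}$ with $p,q<n$ is too strong as stated, since such a deletion can kill the triangle $\{p,q,n\}$; the paper repairs this within your framework by the count $\left\lceil\frac{n-2}{2}\right\rceil+1<n-2$, which shows some triangle $\{p,q,r\}$ always survives.
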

\begin{proof}
  In each step one edge is deleted, hence
  \begin{equation}\label{eq:delete_edge}
    e\left(\overline{G_l}\right)=e\left(\overline{G_{l-1}}\right)+1.
  \end{equation}
  The new edge $\{n,n-l\}$ becomes a new vertex in the line graph. This new vertex is adjacent to
  \begin{itemize}
  \item all edges $\{n,n-l'\}$, $l'=1,\dots,l-1$, 
  \item all $\{n-l,n-j\}\in E\left(\overline{G_0}\right)$ with $j\in\{1,\dots,l-1\}$, and
  \item the edge $\{n-l,b_l\}$ if $l\leq\lceil(n-2)/2\rceil$. 
  \end{itemize}
  The second group of additional edges in the line digraph is compensated by additional triangles in $\overline{G_l}$. More precisely, 
  \begin{align}
    e\left(L\left(\overline{G_l}\right)\right) &= e\left(L\left(\overline{G_{l-1}}\right)\right)+(l-1)+\left\lvert\left\{j\,:\,\{n-l,n-j\}\in E\left(\overline{G_0}\right),\,1\leq j\leq l-1\right\}\right\rvert\nonumber\\
    &\qquad\qquad+
    \begin{cases}
    1 &\text{if }l\leq\lceil(n-2)/2\rceil\\
    0 &\text{if }l>\lceil(n-2)/2\rceil
    \end{cases}\label{eq:new_edges}\\
    t\left(\overline{G_l}\right) &= t\left(\overline{G_{l-1}}\right)+\left\lvert\left\{j\,:\,\{n-l,n-j\}\in E\left(\overline{G_0}\right),\,1\leq j\leq l-1\right\}\right\rvert.\label{eq:new_triangles}
  \end{align}
  Combining~(\ref{eq:delete_edge}),~(\ref{eq:new_edges}) and~(\ref{eq:new_triangles}), we get
  \[\phi\left(G_l\right)=\phi\left(G_{l-1}\right)-(n-3)+(l-1)+
    \begin{cases}
      1 &\text{if }l\leq\lceil(n-2)/2\rceil,\\
      0 &\text{if }l>\lceil(n-2)/2\rceil.
    \end{cases}\]
  To see that the graphs $G_l$, $l=1,\dots,n-3$, are T-graphs we check that every edge $\{p,q\}$ is contained in a triangle. For $p,q<n$, the edges of $\overline{G_0}$ remove at most $\lceil(n-2)/2\rceil$ of the $n-2$ triangles $\{p,q,r\}$, and deleting the edges $\{n,l\}$ can remove only one additional triangle, namely the triangle $\{p,q,n\}$. From $\lceil(n-2)/2\rceil+1<n-2$ it follows that there is always a triangle $\{p,q,r\}$ left. For edges $\{p,n\}$ with $2\leq p\leq n/2$, there is the triangle $\{1,p,n\}$, for the edge $\{1,n\}$ there is the triangle $\{1,2,n\}$, and for an edge $\{p,n\}$ with $p=n-i$, $i\in\{1,\dots,\lceil(n-2)/2\rceil\}$, we can take any triangle $\{j,p,n\}$ with $j\in\{1,\dots,p-1\}\setminus\{b_i\}$.
\end{proof}
In the next lemma, we state the interval of maximal antichain sizes obtained by the above construction.
\begin{lemma}\label{lem:top_row}
  For every integer $n\geq 6$,  
  \[S(n,2)\supseteq\left[\binom{n}{3}-\binom{n-2}{2}-(n-4)\left\lceil\frac{n-2}{2}\right\rceil,\,\binom{n}{3}-(n-3)\left\lceil\frac{n-2}{2}\right\rceil+i^*(n)\right].\]  
\end{lemma}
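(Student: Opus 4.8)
The plan is to assemble the two ingredients built above into a single grid of attainable antichain sizes and then to show that its rows overlap with no gaps. Throughout write $q=\left\lceil\frac{n-2}{2}\right\rceil$, let $s=\binom{n}{3}-(n-3)q$ be the size at the top of the target interval, and set $\Sigma_l=\sum_{j=1}^{l}\alpha_j$ (with $\Sigma_0=0$) for the cumulative drops of \Cref{lem:move_sequence}. For each $i\in\{0,1,\dots,i^*(n)\}$ there is, by the definition of $i^*(n)$, an $n$-starter $G$ with $e\left(L\left(\overline G\right)\right)=i$; relabeling vertices does not change this isomorphism invariant, so we may assume $G$ is properly labeled, and it corresponds to a maximal antichain of size $s+i$.

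First I would run \Cref{lem:move_sequence} on each such starter. Deleting the first $l$ of the edges $\{n,n-1\},\dots,\{n,3\}$ turns the starter of size $s+i$ into a T-graph whose antichain has size $s+i-\Sigma_l$, and this is legitimate for every $l\in\{0,1,\dots,n-3\}$. Letting $i$ range over $\{0,\dots,i^*(n)\}$ for a fixed $l$ then shows that the whole row
\[I_l=\left[\,s-\Sigma_l,\ s+i^*(n)-\Sigma_l\,\right]\subseteq S(n,2).\]
A direct summation gives $\Sigma_{n-3}=\binom{n-2}{2}-q$ (the terms $n-l-2$ sum to $\binom{n-2}{2}$, and the first $q$ of the $\alpha_l$ each carry an extra $-1$), so the bottom row $I_{n-3}$ has left endpoint $s-\Sigma_{n-3}=\binom{n}{3}-\binom{n-2}{2}-(n-4)q$, exactly the left end demanded by the lemma, while $I_0$ has right endpoint $s+i^*(n)$, the demanded right end.

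The heart of the proof is to verify that $\bigcup_l I_l$ has no gaps. Since $I_{l-1}$ and $I_l$ each contain $i^*(n)+1$ integers and are shifted by $\alpha_l$, they are contiguous exactly when $\alpha_l\le i^*(n)+1$. The sequence $(\alpha_l)$ is weakly decreasing, with maximum $\alpha_1=n-4$, so the entire overlap condition collapses to the single inequality $i^*(n)\ge n-5$. By \Cref{lem:j^*_bound} this holds for every $n\ge 6$ with the sole exception $n=10$, where $i^*(10)=4<5$; this bookkeeping, confirming that the one binding step $\alpha_1$ never outruns the starter-row width $i^*(n)+1$, is the step I expect to be most delicate.

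It remains to patch $n=10$. Here the grid covers the target interval $[68,96]$ except for the single value $s-1=91$, which sits in the length-one gap between $I_0$ and $I_1$ caused by $\alpha_1-(i^*(10)+1)=1$. To realize this one size outside the starter family, take $\overline G$ to be a $4$-star together with a disjoint edge (and three isolated vertices) on $10$ vertices, so that $e\left(\overline G\right)=5$, $t\left(\overline G\right)=0$, and $e\left(L\left(\overline G\right)\right)=\binom{4}{2}=6$; a direct check confirms that every edge of the dense complement $G$ lies in a triangle, and by \Cref{lem:AC_size} the corresponding antichain has size $\binom{10}{3}-7\cdot 5+6=91$. Adjoining this value makes the union a full interval for $n=10$ as well, completing the proof.
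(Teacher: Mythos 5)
Your proposal is correct and follows essentially the same route as the paper: starters realizing sizes $m_0,\dots,m_0+i^*(n)$, the edge-deletion shifts of \Cref{lem:move_sequence}, an overlap analysis that reduces to $i^*(n)\geq n-5=\alpha_1-1$, and a one-off patch at $n=10$, $m=91$ using (up to relabeling) the very same T-graph as the paper's Case~2. The only cosmetic differences are that you phrase the gap analysis as overlapping rows $I_l$ rather than choosing the minimal deletion index $t$, and your uniform argument subsumes the case $n=6$ that the paper handles by explicit examples (note also that \Cref{lem:j^*_bound} only gives $i^*(10)\geq 4$, not equality, but your patch covers the possibly missing value either way).
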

\begin{proof}
  For $n=6$, we obtain the sizes in the interval $[10,15]$ from the T-graphs whose complements are shown in \Cref{fig:n_6}.
  \begin{figure}[htb]
    \begin{minipage}{.32\linewidth}
      \centering
    \begin{tikzpicture}
      \foreach \a in {1,2,...,6} {
        \node[circle,fill=black,outer sep=1pt,inner sep=1pt,label=\a*360/6:$\a$] (\a) at (\a*360/6:1cm) {};
      }
      \draw[thick] (5) -- (1);
      \draw[thick] (4) -- (2);
      \node at (0,-1.7) {{\small $12+2=14$}};
    \end{tikzpicture}
  \end{minipage}\hfill
  \begin{minipage}{.32\linewidth}
      \centering
  \begin{tikzpicture}
      \foreach \a in {1,2,...,6} {
        \node[circle,fill=black,outer sep=1pt,inner sep=1pt,label=\a*360/6:$\a$] (\a) at (\a*360/6:1cm) {};
      }
      \draw[thick] (5) -- (1);
      \draw[thick] (4) -- (1);
      \node at (0,-1.7) {{\small $13+2=15$}};
    \end{tikzpicture}
    \end{minipage}\hfill
  \begin{minipage}{.32\linewidth}
      \centering
  \begin{tikzpicture}
      \foreach \a in {1,2,...,6} {
        \node[circle,fill=black,outer sep=1pt,inner sep=1pt,label=\a*360/6:$\a$] (\a) at (\a*360/6:1cm) {};
      }
      \draw[thick] (5) -- (1);
      \draw[thick] (4) -- (2);
      \draw[thick] (6) -- (5);
      \node at (0,-1.7) {{\small $9+3=12$}};
    \end{tikzpicture}
  \end{minipage}

  \bigskip
  
  \begin{minipage}{.32\linewidth}
      \centering
    \begin{tikzpicture}
      \foreach \a in {1,2,...,6} {
        \node[circle,fill=black,outer sep=1pt,inner sep=1pt,label=\a*360/6:$\a$] (\a) at (\a*360/6:1cm) {};
      }
      \draw[thick] (5) -- (1);
      \draw[thick] (4) -- (1);
      \draw[thick] (6) -- (5);
      \node at (0,-1.7) {{\small $10+3=13$}};
    \end{tikzpicture}
  \end{minipage}\hfill
  \begin{minipage}{.32\linewidth}
      \centering
  \begin{tikzpicture}
      \foreach \a in {1,2,...,6} {
        \node[circle,fill=black,outer sep=1pt,inner sep=1pt,label=\a*360/6:$\a$] (\a) at (\a*360/6:1cm) {};
      }
      \draw[thick] (5) -- (1);
      \draw[thick] (4) -- (2);
      \draw[thick] (6) -- (5);
      \draw[thick] (6) -- (4);
      \draw[thick] (6) -- (3);
      \node at (0,-1.7) {{\small $5+5=10$}};
    \end{tikzpicture}
    \end{minipage}\hfill
  \begin{minipage}{.32\linewidth}
      \centering
  \begin{tikzpicture}
      \foreach \a in {1,2,...,6} {
        \node[circle,fill=black,outer sep=1pt,inner sep=1pt,label=\a*360/6:$\a$] (\a) at (\a*360/6:1cm) {};
      }
      \draw[thick] (5) -- (1);
      \draw[thick] (4) -- (1);
      \draw[thick] (6) -- (5);
      \draw[thick] (6) -- (4);
      \draw[thick] (6) -- (3);
      \node at (0,-1.7) {{\small $6+5=11$}};
    \end{tikzpicture}
    \end{minipage}
      \caption{Complements of the graphs corresponding to maximal antichains with sizes in $[10,15]$. The line below each graph has the format ``$\# 3\text{-sets}+\# 2\text{-sets} =\text{size of the antichain}$''.}\label{fig:n_6}
    \end{figure}
  For $n\geq 7$, let $m$ be any number in the interval on the right hand side of the claimed inclusion, and let $m_0$ be the smallest size of an antichain coming from a starter, that is,
  \[m_0=\binom{n}{3}-(n-3)\left\lceil\frac{n-2}{2}\right\rceil.\]
  By definition of $i^*(n)$, there is nothing to do if $m\in[m_0,m_0+i^*(n)]$. We define the sequence $\alpha_1,\dots,\alpha_{n-1}$ as in the proof of \Cref{lem:move_sequence}. Using \Cref{lem:move_sequence}, we can obtain a maximal antichain of size
  \[m_0-\left(\alpha_1+\alpha_2+\dots+\alpha_{n-3}\right) = m_0-\binom{n-2}{2}+\left\lceil\frac{n-2}{2}\right\rceil=\binom{n}{3}-\binom{n-2}{2}-(n-4)\left\lceil\frac{n-2}{2}\right\rceil.\]
  Let $t\in\{0,1,\dots,n-3\}$ be the smallest index with $m_0-\left(\alpha_1+\alpha_2+\dots+\alpha_{t}\right)\leq m$, and set $i=m-m_0+\left(\alpha_1+\alpha_2+\dots+\alpha_{t}\right)$. Then $i\leq\alpha_1-1=n-5$ with equality only if $t=1$ and $m=m_0-1$. By \Cref{lem:j^*_bound}, this leaves two cases.
  \begin{description}
  \item[Case 1] $i\leq i^*(n)$. Let $G$ be a starter with $t(G)+e\left(\overline G\right)=m_0+i$. By \Cref{lem:move_sequence}, the graph $G'$ with 
    \[ E\left(\overline{G'}\right) = E\left(\overline G\right)\cup\left\{\{n,n-1\},\,\{n,n-2\},\dots,\,\{n,n-t\}\right\},\]
    corresponds to a maximal antichain of size $m$.
\item[Case 2] $n=10$ and $m=m_0-1=91$. In this case, we conclude by pointing to the T-graph $G$ with 
  \[E\left(\overline G\right) = \left\{\{10,9\},\,\{1,2\},\,\{1,3\},\,\{1,4\},\,\{1,5\}\right\},\]
 which satisfies $t(G)+e\left(\overline G\right)=86+5=91$.\qedhere
  \end{description}
\end{proof}
Finally, we combine \Cref{lem:top_row} with the recursion~(\ref{eq:adding_isolated_vertex}) to prove \Cref{lem:base_case}.
\begin{proof}[Proof of \Cref{lem:base_case}.]
  For the base case $n=6$, \Cref{lem:top_row} gives the interval $[10,15]$, and size 9 is obtained by the flat maximal antichain with 3-sets $\{1,2,5\}$, $\{1,2,6\}$, $\{3,4,5\}$, $\{3,4,6\}$. Now assume $n\geq 7$ and
  \[\left[\binom{n-1}{2}-6,\,\binom{n-1}{3}-(n-4)\left\lceil\frac{n-3}{2}\right\rceil+i^*(n-1)\right]\subseteq S(n-1,2).\]
  Then~(\ref{eq:adding_isolated_vertex}) implies
  \begin{multline*}
    S(n,2)\supseteq\left[\binom{n-1}{2}-6,\,\binom{n-1}{3}-(n-4)\left\lceil\frac{n-3}{2}\right\rceil+i^*(n-1)\right]+(n-1)\\
    = \left[\binom{n}{2}-6,\,\binom{n-1}{3}-(n-4)\left\lceil\frac{n-3}{2}\right\rceil+i^*(n-1)+(n-1)\right],   
  \end{multline*}
  while \Cref{lem:top_row} ensures that
  \[S(n,2)\supseteq\left[\binom{n}{3}-\binom{n-2}{2}-(n-4)\left\lceil\frac{n-2}{2}\right\rceil,\,\binom{n}{3}-(n-3)\left\lceil\frac{n-2}{2}\right\rceil+i^*(n)\right].\]
  To conclude the proof it is sufficient to verify 
  \[\binom{n-1}{3}-(n-4)\left\lceil\frac{n-3}{2}\right\rceil+i^*(n-1)+(n-1)\geq\binom{n}{3}-\binom{n-2}{2}-(n-4)\left\lceil\frac{n-2}{2}\right\rceil-1.\]
  This simplifies to
  \[i^*(n-1)\geq (n-4)\left(\left\lceil\frac{n-3}{2}\right\rceil-\left\lceil\frac{n-2}{2}\right\rceil\right)-2,\]
  which is obvious because the right hand side is negative.
\end{proof}

\subsection{Intervals in \texorpdfstring{$S(n,l)$}{}}\label{subsec:k_geq_3}
In this subsection we prove \Cref{prop:large_flat} by induction on $l$ and $n$. The structure of the argument can be summarized as follows.
\begin{itemize}
\item The base case for the induction on $l$, that is, the case $l=2$, is \Cref{lem:base_case} and has been proved in \Cref{subsec:k_2}.
\item For $l\geq 3$, the base case for the induction on $n$ is $n=2l+2$. By induction on $l$, we have intervals $I_1\subseteq S(2l+1,l-1)$ and
  $I_2\subseteq S(2l,l-1)$. By~(\ref{eq:recursion_1}), $I_1+\binom{2l+1}{l+1}\subseteq S(n,l)$, and by~(\ref{eq:recursion_1}) and~(\ref{eq:adding_isolated_vertex}),
  $I_2+\binom{2l}{l+1}+\binom{2l+1}{l-1}\subseteq S(n,l)$, and we verify that the union of these two intervals is the required subset of $S(n,l)$.
\item For $l\geq 3$ and $n\geq 2l+3$, we have three ingredients from the induction, and apply (\ref{eq:adding_isolated_vertex}), (\ref{eq:recursion_1}) and~(\ref{eq:recursion_2}):
  \begin{itemize}
  \item induction on $n$ gives us an interval $I_1\subseteq S(n-1,l)$ and then $I_1+\binom{n-1}{l-1}\subseteq S(n,l)$ by~(\ref{eq:adding_isolated_vertex}),
  \item induction on $l$ gives us an interval $I_2\subseteq S(n-1,l-1)$ and then $I_2+\binom{n-1}{l+1}\subseteq S(n,l)$ by~(\ref{eq:recursion_1}), 
  \item induction on $l$ gives us an interval $I_3\subseteq S(n-2,l-1)$ and then $I_3+I_3+\binom{n-2}{l+1}+\binom{n-2}{l-2}\subseteq S(n,l)$ by~(\ref{eq:recursion_2}).    
  \end{itemize}
  We conclude the proof of \Cref{prop:large_flat} by verifying that the union of these three intervals is the required subset of $S(n,l)$.
\end{itemize}
Recall that $C_l$ denotes the sum of the first $l$ Catalan numbers: 
$C_l=\sum_{i=1}^l\frac{1}{i+1}\binom{2i}{i}$.

\begin{proof}[Proof of Proposition~\ref{prop:large_flat}]
We proceed by induction on $l$ and $n$. The base cases ($l=2$ and $n\geq 6$) is
\Cref{lem:base_case}. For $l\geq 3$ we start with $n=2l+2$. By induction on $l$,
\[S(2l+1,l-1)\supseteq\left[\binom{2l+1}{l-1}-3-C_{l-1},\ \binom{2l+1}{l}-(l+1)\left\lceil\frac{l+2}{2}\right\rceil+f(l+2)\right],\]
and with~(\ref{eq:recursion_1}),
\begin{equation}\label{eq:first_half}
  S(2l+2,l)\supseteq\left[\binom{2l+2}{l}-3-C_{l-1},\ \binom{2l+2}{l+1}-(l+1)\left\lceil\frac{l+2}{2}\right\rceil+f(l+2)\right].
\end{equation}
Similarly,
\[S(2l,l-1)\supseteq\left[\binom{2l}{l-1}-3-C_{l-1},\ \binom{2l}{l}-l\left\lceil\frac{l+1}{2}\right\rceil+f(l+1)\right],\]
and then, using~(\ref{eq:recursion_1}) and~(\ref{eq:adding_isolated_vertex}),
\begin{multline}\label{eq:second_half}
  S(2l+2,l)\supseteq\left[\binom{2l}{l-1}-3-C_{l-1},\ \binom{2l}{l}-l\left\lceil\frac{l+1}{2}\right\rceil+f(l+1)\right]+\binom{2l}{l+1}+\binom{2l+1}{l-1}\\
  =\left[\binom{2l+2}{l}-3-C_l,\ \binom{2l+2}{l}-l\left\lceil\frac{l+1}{2}\right\rceil+f(l+1)\right].
\end{multline}
Here the left end of the interval comes from 
\[\binom{2l}{l-1}+\binom{2l}{l+1}+\binom{2l+1}{l-1}=\binom{2l+2}{l}-\frac{1}{l+1}\binom{2l}{l}.\]
Using the inequality $l\left\lceil\frac{l+1}{2}\right\rceil-f(l+1)\leq 4+C_{l-1}$, we obtain
from~(\ref{eq:first_half}) and (\ref{eq:second_half}) that
\[S(2l+2,l)\supseteq\left[\binom{2l+2}{l}-3-C_l,\
    \binom{2l+2}{l+1}-(l+1)\left\lceil\frac{l+2}{2}\right\rceil+f(l+2)\right],\] 
which is the required interval for $n=2l+2$. For $n\geq 2l+3$, we have by induction that $I_1\subseteq S(n-1,l)$, $I_2\subseteq S(n-1,l-1)$ and $I_3\subseteq S(n-2,l-1)$, where
  \begin{align*}
    I_1 &= \left[\binom{n-1}{l}-3-C_l,\ \binom{n-1}{l+1}-(n-l-2)\left\lceil\frac{n-l-1}{2}\right\rceil+f(n-l-1)\right],\\
    I_2 &= \left[\binom{n-1}{l-1}-3-C_{l-1},\ \binom{n-1}{l}-(n-l-1)\left\lceil\frac{n-l}{2}\right\rceil+f(n-l)\right],\\
    I_3 &= \left[\binom{n-2}{l-1}-3-C_{l-1},\ \binom{n-2}{l}-(n-l-2)\left\lceil\frac{n-l-1}{2}\right\rceil+f(n-l-1)\right].
  \end{align*}
  With~(\ref{eq:adding_isolated_vertex}) and~(\ref{eq:recursion_1}) we obtain from $I_1\subseteq S(n-1,l)$ and $I_2\subseteq S(n-1,l-1)$ that
  \begin{align*}
    S(n,l) &\supseteq \left[\binom{n}{l}-3-C_l,\ \binom{n-1}{l+1}+\binom{n-1}{l-1}-(n-l-2)\left\lceil\frac{n-l-1}{2}\right\rceil+f(n-l-1)\right],\\
    S(n,l) &\supseteq \left[\binom{n-1}{l+1}+\binom{n-1}{l-1}-3-C_{l-1},\ \binom{n}{l+1}-(n-l-1)\left\lceil\frac{n-l}{2}\right\rceil+f(n-l)\right].
  \end{align*}
  From~(\ref{eq:recursion_2}) and $I_3\subseteq S(n-2,l-1)$, we have
  \begin{multline*}
    S(n,l) \supseteq \left[\binom{n-2}{l-1}-3-C_{l-1},\ \binom{n-2}{l}-(n-l-2)\left\lceil\frac{n-l-1}{2}\right\rceil+f(n-l-1)\right]\\ 
    +\binom{n-2}{l}-(n-l-2)\left\lceil\frac{n-l-1}{2}\right\rceil+f(n-l-1)+\binom{n-2}{l+1}+\binom{n-2}{l-2}\\
    \supseteq\left[\binom{n-1}{l+1}+\binom{n-1}{l-1}-(n-l-2)\left\lceil\frac{n-l-1}{2}\right\rceil+f(n-l-1),\right.\\
    \left.\binom{n-1}{l+1}+\binom{n-2}{l}+\binom{n-2}{l-2}-(n-l-2)(n-l)+2f(n-l-1)\right],
  \end{multline*}
  so the result follows once we verify that
  \[\binom{n-1}{l+1}+\binom{n-2}{l}+\binom{n-2}{l-2}-(n-l-2)(n-l)+2f(n-l-1)\geq\binom{n-1}{l+1}+\binom{n-1}{l-1}-4-C_{l-1}.\]
  This inequality simplifies to
  \[\binom{n-2}{l}-\binom{n-2}{l-1}-(n-l-2)(n-l)+2f(n-l-1)+4+C_{l-1}\geq 0.\]
  For $(l,n)=(3,9)$ and $(l,n)=(3,10)$, the left-hand side is equal to 3 and 6, respectively, and for all other pairs $(l,n)$ with $l\geq 3$ and $n\geq 2l+3$, the inequality is a consequence of the following lemma.
\end{proof}
\begin{lemma}\label{lem:aux_inequality}
  For integers $l$ and $n$ with $l=3$ and $n\geq 11$, or $l\geq 4$ and $n\geq 2l+3$,
  \[\binom{n-2}{l}-\binom{n-2}{l-1}\geq(n-l-2)(n-l).\]
\end{lemma}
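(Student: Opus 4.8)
The plan is to prove the inequality by induction on $n$; writing $N:=n-2$ and $g_l(N):=\binom{N}{l}-\binom{N}{l-1}$, the target becomes $g_l(N)\geq R(N)$ with $R(N):=(N-l)(N-l+2)$, to be established for $N\geq 2l+1$ when $l\geq 4$ and for $N\geq 9$ when $l=3$. The engine of the induction is a first-difference identity: applying Pascal's rule twice gives
\[g_l(N+1)-g_l(N)=\binom{N}{l-1}-\binom{N}{l-2}=g_{l-1}(N),\]
while a direct expansion yields $R(N+1)-R(N)=2(N-l)+3$. Hence the induction step reduces to the single auxiliary inequality $g_{l-1}(N)\geq 2(N-l)+3$, since then $g_l(N+1)=g_l(N)+g_{l-1}(N)\geq R(N)+\bigl(R(N+1)-R(N)\bigr)=R(N+1)$.

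Next I would settle the base cases. For $l=3$ and $N=9$ one checks $\binom{9}{3}-\binom{9}{2}=48=(9-3)(9-1)$, so the inequality holds with equality. For $l\geq 4$ and $N=2l+1$ I would use the factorisation $g_l(N)=\binom{N}{l-1}\,\tfrac{N-2l+1}{l}$, which turns the claim into $\tfrac{2}{l}\binom{2l+1}{l-1}\geq (l+1)(l+3)$. Since $3\leq l-1\leq l=\lfloor(2l+1)/2\rfloor$, unimodality of the binomial coefficients gives $\binom{2l+1}{l-1}\geq\binom{2l+1}{3}$, and substituting $\binom{2l+1}{3}=\tfrac{(2l+1)(2l)(2l-1)}{6}$ reduces everything to the quadratic inequality $5l^2-12l-11\geq 0$, which holds for all $l\geq 4$.

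It then remains to verify the auxiliary inequality $g_{l-1}(N)\geq 2(N-l)+3$ for every $N$ in the relevant range. For $l=3$ this reads $\binom{N}{2}-N\geq 2N-3$, i.e.\ $(N-1)(N-6)\geq 0$, valid for all $N\geq 6$ and in particular for $N\geq 9$. For $l\geq 4$ I would again factor $g_{l-1}(N)=\binom{N}{l-2}\,\tfrac{N-2l+3}{l-1}$, bound $\binom{N}{l-2}\geq\binom{N}{2}$ by unimodality (using $2\leq l-2\leq N/2$, which follows from $N\geq 2l+1$) and $N-2l+3\geq 4$, to obtain
\[g_{l-1}(N)\geq\frac{N(N-1)}{2}\cdot\frac{4}{l-1}=\frac{2N(N-1)}{l-1}\geq 4(N-1)\geq 2(N-l)+3,\]
where the middle inequality uses $N\geq 2(l-1)$ and the last is immediate. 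Combining the base case with the induction step completes the proof.

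The only genuinely delicate points are the two unimodality comparisons $\binom{2l+1}{l-1}\geq\binom{2l+1}{3}$ and $\binom{N}{l-2}\geq\binom{N}{2}$: each is valid precisely because the lower index lies between $2$ and the central value, and confirming the index bounds $l-1\leq(2l+1)/2$ and $l-2\leq N/2$ is where the hypotheses $l\geq 4$ and $N\geq 2l+1$ are actually used. Past that, the whole argument collapses to checking a few explicit quadratic inequalities in $l$ and $N$, so I expect no essential obstacle.
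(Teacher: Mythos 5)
Your proof is correct, and it takes a genuinely different route from the paper's. The paper rewrites the left-hand side as $\frac{n-2l-1}{n-l-1}\binom{n-2}{l}$ and cancels common factors to restate the claim as the product inequality $\frac{n-2l-1}{n-l-2}\prod_{k=n-l+1}^{n-2}k\geq l!$; it then observes that for fixed $l$ this expression is increasing in $n$, so only the boundary case ($n=11$ for $l=3$, $n=2l+3$ for $l\geq 4$) needs checking, and that one-parameter family is handled by a short induction on $l$. You instead fix $l$ and induct on $n$ directly, using the telescoping identity $g_l(N+1)-g_l(N)=g_{l-1}(N)$ (two applications of Pascal's rule) to reduce the induction step to the auxiliary bound $g_{l-1}(N)\geq 2(N-l)+3$, which you verify by the factorisation $g_{l-1}(N)=\binom{N}{l-2}\frac{N-2l+3}{l-1}$ together with unimodality; the base cases come down to the explicit quadratics $5l^2-12l-11\geq 0$ and $(N-1)(N-6)\geq 0$. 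I checked the details: the difference computations $R(N+1)-R(N)=2(N-l)+3$ and $g_l(N+1)-g_l(N)=g_{l-1}(N)$ are right, the index conditions for the two unimodality comparisons ($3\leq l-1\leq l$ for $\binom{2l+1}{l-1}\geq\binom{2l+1}{3}$, and $2\leq l-2\leq N/2$ from $N\geq 2l+1$) are correctly where the hypotheses enter, and the chain $\frac{2N(N-1)}{l-1}\geq 4(N-1)\geq 2(N-l)+3$ is valid since $N\geq 2(l-1)$. In terms of trade-offs: the paper's monotonicity-in-$n$ observation collapses the two-parameter problem to a single boundary family per $l$, making for a shorter write-up, while your first-difference induction avoids the factorial and product manipulations entirely, staying within Pascal's rule, unimodality, and a few explicit quadratic checks — slightly longer, but arguably more transparent about why the inequality improves as $n$ grows (the increment $g_{l-1}(N)$ eventually dwarfs the linear increment of the right-hand side).
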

\begin{proof}
  The left-hand side is
  \[\binom{n-2}{l}-\binom{n-2}{l-1}=\binom{n-2}{l}-\frac{l}{n-l-1}\binom{n-2}{l}=\frac{n-2l-1}{n-l-1}\binom{n-2}{l},\]
  and after cancelling common factors the claimed inequality becomes 
  \[\frac{n-2l-1}{n-l-2}\prod_{k=n-l+1}^{n-2}k\geq l!.\] 
  For fixed $l$, the left-hand side is increasing in $n$, so it is sufficient to consider the smallest relevant $n$, that is, $n=11$ if $l=3$ and $n=2l+3$ if $l\geq 4$. For $l=3$, $\frac{4}{6}\times 9=6=3!$. For $l\geq 4$ the claim is $\frac{2}{l+1}\prod_{k=l+4}^{2l+1}k\geq l!$, and we proceed by induction on $l$. The base case $l=4$:  $\frac{2}{5}\times 8\times 9=\frac{144}{5}>4!$, and for $l\geq 5$, the induction step is
  \[\frac{2}{l+1}\prod_{k=l+4}^{2l+1}k=\frac{l(2l)(2l+1)}{(l+1)(l+3)}\left(\frac{2}{l}\prod_{k=l+3}^{2l-1}k\right)\geq\frac{l(2l)(2l+1)}{(l+1)(l+3)}(l-1)!>l!.\qedhere\]  
\end{proof}


\section{Proofs of \texorpdfstring{\Cref{prop:flat}}{} and \texorpdfstring{\Cref{thm:main_result}}{}}\label{sec:proof_main_result}
\subsection{Proof of Proposition~\ref{prop:flat}}\label{subsec:thm_flat}
In this subsection we construct small maximal antichains in $\binom{[n]}{l}\cup\binom{[n]}{l+1}$ in order to extend the interval of sizes given by \Cref{prop:large_flat} to the left. This generalizes the construction for the minimum of $S(n,2)$ from \cite{Gruettmueller2009}. 
The corresponding maximal antichain of minimum size $\binom{n}{2}-\left\lfloor\frac{(n+1)^2}{8}\right\rfloor$ is obtained by partitioning the ground set into two parts $A$ and $B$ of almost equal size with $\abs{A}$ even, say $\abs{A}=\left\lfloor\frac{n}{2}\right\rfloor$ if $n\equiv 0\text{ or }1\pmod 4$ and $\abs{A}=\left\lfloor\frac{n+2}{2}\right\rfloor$ if $n\equiv 2\text{ or }3\pmod 4$. With $M$ being a perfect matching on $A$ the minimum size of a maximal antichain in $\binom{[n]}{2}\cup\binom{[n]}{3}$ is obtained by taking the $3$-sets $xyz$ with $xy\in M$ and $z\in B$. We observe that, essentially by dropping the 3-sets one by one, we obtain flat maximal antichains of all sizes between the minimum $\binom{n}{2}-\left\lfloor\frac{(n+1)^2}{8}\right\rfloor$ and $\binom{n}{2}-2$ (coming from the flat maximal antichain which consists of the 3-set $\{1,2,3\}$ and all $2$-sets except the subsets of $\{1,2,3\}$). 
Indeed, fixing an edge $e$ in $M$, the size of the corresponding maximal antichain increases by 1 as we drop one 3-set containing $e$, as long as this 3-set is not the last one containing $e$. Dropping this last 3-set $D$ increases the size by 2. If $e$ is not the last remaining element of $M$, the missing size can be obtained by leaving $D$ in the family and dropping a 3-set containing another edge $e'\in M$.

For arbitrary $l$,  we generalize this construction as follows. Fix an integer $t$ and a family $\mathcal F\subseteq\binom{[t]}{l}$ of pairwise shadow-disjoint $l$-sets, that is, such that $\abs{A\cap B}\leq l-2$ for any distinct members $A,B\in\mathcal F$. Additionally, for each member $A\in\mathcal F$ we specify a non-empty set $X(A)\subseteq\{t+1,\dots,n\}$, and we set
\begin{align}
  \mathcal A_{l+1} &= \left\{A\cup\{i\}\,:\,A\in\mathcal F,\,i\in X(A)\right\}, & \mathcal                                                                         A_l=\binom{[n]}{l}\setminus\Delta\mathcal A_{l+1}.\label{eq:small_flat_construction}
\end{align}
Then $\mathcal A=\mathcal A_{l+1}\cup\mathcal A_{l}$ is an antichain of size
\[\abs{\mathcal A}=\sum_{A\in\mathcal F}\abs{X(A)}+\binom{n}{l}-\abs{\mathcal F}-\sum_{A\in\mathcal
    F}l\abs{X(A)}=\binom{n}{l}-\abs{\mathcal F}-(l-1)\sum_{A\in\mathcal F}\abs{X(A)}.\]
Moreover, $\mathcal{A}$ is maximal. Clearly, one cannot add an $l$-set to $\mathcal{A}$ without destroying the antichain property. 
Assume there was an $S\in\binom{[n]}{l+1}\setminus\mathcal{A}$ with $\Delta S\subseteq \Delta\mathcal{A}_{l+1}$. 
By $$\Delta\mathcal{A}_{l+1}=\mathcal{F}\cup\left\{A'\cup\{i\}:A'\in\Delta A,~A\in\mathcal{F},~i\in X(A)\right\},$$
we have $|S\cap\{t+1,\dots,n\}|\le 1$. If $S$ and $\{t+1,\dots,n\}$ are disjoint, then $\Delta S\subseteq\mathcal{F}$, a contradiction to the choice of $\mathcal{F}$.
Consequently, $S=T\cup\{x\}$ with $T\subseteq [t]$ and $x\in [n]\setminus [t]$. By $T\in\Delta S$ and $S\notin\mathcal{A}_{l+1}$, we obtain $T\in\mathcal{F}$ and $x\notin X(T)$. Let $y\in T$. As $T\setminus\{y\}\cup\{x\}\in\Delta S$, it follows that $T\setminus\{y\}\in\Delta A$ for some $A\in\mathcal{F}$ with $x\in X(A)$. Now $T$ and $A$ are distinct elements of $\mathcal{F}$ which are not shadow-disjoint. Again, this contradicts the choice of $\mathcal{F}$.
\begin{example}\label{ex:coding_construction}
    For $n=9$, $l=3$ and $t=6$, we can take for $\mathcal F$ any subfamily of $\{123,145,246,356\}$. For $\mathcal F=\{123\}$ we obtain the maximal antichains with collections of 4-sets
    \begin{itemize}
        \item $\mathcal A_4=\{1237\}$ (size $\binom{9}{3}-3=81$)
        \item $\mathcal A_4=\{1237,1238\}$ (size $\binom{9}{3}-5=79$)
        \item $\mathcal A_4=\{1237,1238,1239\}$ (size $\binom{9}{3}-7=77$)
    \end{itemize}
    For $\mathcal F=\{123,145\}$ we obtain the maximal antichains with collections of 4-sets
    \begin{itemize}
        \item $\mathcal A_4=\{1237,1457\}$ (size $\binom{9}{3}-6=78$)
        \item $\mathcal A_4=\{1237,1457,1238\}$ (size $\binom{9}{3}-8=76$)
        \item $\mathcal A_4=\{1237,1457,1238,1458\}$ (size $\binom{9}{3}-10=74$)
        \item $\mathcal A_4=\{1237,1457,1238,1458,1239\}$ (size $\binom{9}{3}-12=72$)
        \item $\mathcal A_4=\{1237,1457,1238,1458,1239,1459\}$ (size $\binom{9}{3}-14=70$)
    \end{itemize}
    For $\mathcal F=\{123,145,246\}$ we obtain the maximal antichains with collections of 4-sets
    \begin{itemize}
        \item $\mathcal A_4=\{1237,1457,2467\}$ (size $\binom{9}{3}-9=75$)
        \item $\mathcal A_4=\{1237,1457,2467,1238\}$ (size $\binom{9}{3}-11=73$)
        \item \dots
        \item $\mathcal A_4=\{1237,1457,2467,1238,1458,2468,1239,1459,2469\}$ (size $\binom{9}{3}-21=63$)
    \end{itemize}
    For $\mathcal F=\{123,145,246,357\}$ we obtain the maximal antichains with collections of 4-sets
    \begin{itemize}
        \item $\mathcal A_4=\{1237,1457,2467,3567\}$ (size $\binom{9}{3}-12=72$)
        \item $\mathcal A_4=\{1237,1457,2467,3567,1238\}$ (size $\binom{9}{3}-14=70$)
        \item \dots
        \item $\mathcal A_4=\{1237,1457,2467,3567,1238,1458,2468,3568,1239,1459,2469,3569\}$ (size $\binom{9}{3}-28=56$)
    \end{itemize}
    In particular, we obtain flat maximal antichains of all sizes in the interval $[63,79]=\left[\binom{9}{3}-21,\binom{9}{3}-5\right]$ (see \Cref{lem:general_partition} below for the general version). 
\end{example} 
To give a precise statement about the sizes of flat maximal antichains that can be obtained by the above construction we introduce the following notation. For integers $l<t$, let $\phi(t,l)$ be the maximum size of a pairwise shadow-disjoint family $\mathcal F\subseteq\binom{[t]}{l}$. In particular $\phi(t,2)=\lfloor t/2\rfloor$, and in general $\phi(t,l)\geq\left\lceil\frac{1}{t}\binom{t}{l}\right\rceil$ by~\cite[Theorem 1]{Graham1980}. The following lemma is then an immediate consequence of the above construction, and will be sufficient for constructing flat maximal antichains of the required sizes in \Cref{lem:general_partition}. 
\begin{lemma}\label{lem:coding_construction}
  If $n$, $t$, $l$, $s$ and $\alpha$ are non-negative integers with $2\leq l<t<n$, $s\leq\phi(t,l)$ and $s\leq\alpha\leq(n-t)s$, then $\binom{n}{l}-s-\alpha(l-1)\in S(n,l)$.
\end{lemma}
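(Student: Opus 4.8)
The plan is to realize the size $\binom{n}{l}-s-\alpha(l-1)$ directly through the construction in~\eqref{eq:small_flat_construction}, choosing the family $\mathcal F$ and the assignment $A\mapsto X(A)$ so that $\abs{\mathcal F}=s$ and $\sum_{A\in\mathcal F}\abs{X(A)}=\alpha$. Since the size of the resulting antichain equals $\binom{n}{l}-\abs{\mathcal F}-(l-1)\sum_{A\in\mathcal F}\abs{X(A)}$, these two equalities give exactly the target value, and the maximality of $\mathcal A=\mathcal A_{l+1}\cup\mathcal A_l$ was already verified in the discussion preceding the lemma. Thus the only work is to show that such $\mathcal F$ and $X$ exist under the stated hypotheses.

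First I would use the bound $s\leq\phi(t,l)$: by definition of $\phi(t,l)$ there is a pairwise shadow-disjoint family in $\binom{[t]}{l}$ of size $\phi(t,l)$, and discarding members if necessary yields such a family $\mathcal F$ of size exactly $s$. Here the hypothesis $l<t$ guarantees that $\binom{[t]}{l}$ is nonempty and that the shadow-disjointness condition $\abs{A\cap B}\leq l-2$ is the relevant constraint.

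Next I would distribute the total $\alpha$ among the $s$ chosen sets. Each $X(A)$ must be a nonempty subset of $\{t+1,\dots,n\}$, hence $1\leq\abs{X(A)}\leq n-t$, so the attainable totals $\sum_{A\in\mathcal F}\abs{X(A)}$ are exactly the integers expressible as a sum of $s$ terms each in $\{1,\dots,n-t\}$, namely the integers in $[s,(n-t)s]$. The hypotheses $s\leq\alpha\leq(n-t)s$ therefore say precisely that $\alpha$ is admissible; concretely one may set as many of the cardinalities $\abs{X(A)}$ as possible equal to $n-t$ and let one remaining set absorb the rest. Assigning to each $A\in\mathcal F$ an $X(A)\subseteq\{t+1,\dots,n\}$ of the prescribed size then gives $\sum_{A\in\mathcal F}\abs{X(A)}=\alpha$, and plugging $\abs{\mathcal F}=s$ and this sum into the size formula completes the proof.

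There is no substantial obstacle here: the mathematical content lies entirely in the construction and the maximality argument already established, and this lemma merely records that the two numerical constraints $s\leq\phi(t,l)$ and $s\leq\alpha\leq(n-t)s$ are exactly what is needed to instantiate that construction at the required parameters. The only point I would state explicitly, rather than leave implicit, is the elementary equivalence between $s\leq\alpha\leq(n-t)s$ and the expressibility of $\alpha$ as a sum of $s$ bounded positive parts.
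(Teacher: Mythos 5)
Your proposal is correct and takes essentially the same approach as the paper: the paper presents \Cref{lem:coding_construction} as an immediate consequence of the construction in~(\ref{eq:small_flat_construction}), and your instantiation --- a pairwise shadow-disjoint $\mathcal F\subseteq\binom{[t]}{l}$ of size $s$ obtained from $s\leq\phi(t,l)$ by discarding members of a maximum family, together with cardinalities $\abs{X(A)}\in\{1,\dots,n-t\}$ summing to $\alpha$, which is possible exactly when $s\leq\alpha\leq(n-t)s$ --- is precisely that consequence, with the size formula and the maximality argument carried over unchanged. The only difference is that you spell out explicitly the elementary equivalence the paper leaves implicit, which is a reasonable choice of exposition rather than a mathematical divergence.
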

We use \Cref{lem:coding_construction} to find an interval in $S(n,l)$ as follows. For fixed $\abs{\mathcal F}$, varying $\sum_{A\in\mathcal F}\abs{X(A)}$ yields maximal flat antichains whose sizes form an arithmetic progression with common difference $l-1$ (see \Cref{ex:coding_construction} for an illustration). The union of these progressions for varying $\abs{\mathcal F}$ contains an interval whose boundaries are stated in the following lemma. These sizes are what will needed for the final step in the proof of \Cref{thm:main_result} which is provided in \Cref{lem:small_l}.
\begin{lemma}\label{lem:general_partition}
  For $l\geq 2$ and $l+3\leq t\leq n-l$, 
  \[\left[\binom{n}{l}-(1+(l-1)(n-t))\left(\phi(t,l)-l+2\right),\,\binom{n}{l}-(l-1)^2-1\right]\subseteq S(n,l).\]  
\end{lemma}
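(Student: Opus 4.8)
The plan is to invoke \Cref{lem:coding_construction} and then verify that every integer in the target interval arises as $\binom{n}{l}-s-\alpha(l-1)$ for an admissible pair $(s,\alpha)$. Writing $d=l-1$, $N=n-t$ and $p=\phi(t,l)$, \Cref{lem:coding_construction} gives $\binom{n}{l}-\delta\in S(n,l)$ whenever $\delta=s+\alpha d$ with $1\le s\le p$ and $s\le\alpha\le Ns$. It therefore suffices to show that the set of \emph{deficiencies}
\[D=\bigcup_{s=1}^{p}\bigl\{\,s+\alpha d\ :\ s\le\alpha\le Ns\,\bigr\}\]
contains every integer $\delta$ with $d^2+1\le\delta\le(1+Nd)(p-d+1)$, since $d^2+1=(l-1)^2+1$ and $(1+Nd)(p-d+1)=(1+(l-1)(n-t))(\phi(t,l)-l+2)$.

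For fixed $s$, as $\alpha$ runs through $\{s,\dots,Ns\}$ the value $\delta=s+\alpha d$ runs through the arithmetic progression from $s(d+1)$ to $s(Nd+1)$ with common difference $d$, all of whose terms are $\equiv s\pmod d$. I would organize the argument by residue classes modulo $d$. Fixing a residue $r\in\{1,\dots,d\}$ (with $r=d$ representing $0\bmod d$), the relevant values of $s$ are $r,r+d,r+2d,\dots$, the smallest being $s=r$. The key overlap estimate is that the progressions for $s$ and for $s+d$ leave no gap within the residue class, i.e.
\[(s+d)(d+1)\le s(Nd+1)+d,\]
which simplifies to $d\le s(N-1)$. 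Since the hypothesis $t\le n-l$ gives $N-1=n-t-1\ge l-1=d$, this holds for every $s\ge1$. Hence the union over all admissible $s\equiv r\pmod d$ is a single contiguous block of residue-$r$ integers, namely all $\delta\equiv r\pmod d$ with $r(d+1)\le\delta\le s_{\max}(r)(Nd+1)$, where $s_{\max}(r)$ is the largest $s\le p$ with $s\equiv r\pmod d$.

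It then remains to match these blocks against the target interval. For the \emph{lower} endpoint I would check that $\delta\ge d^2+1$ forces $\delta\ge r(d+1)$, where $r$ is the residue of $\delta$: if $r\le d-1$ then $r(d+1)\le d^2-1<d^2+1\le\delta$, while if $r=d$ then $\delta\equiv0\pmod d$ together with $\delta\ge d^2+1$ forces $\delta\ge d^2+d=d(d+1)$. For the \emph{upper} endpoint I would use that among $\{1,\dots,p\}$ each residue class is attained by some $s\ge p-d+1$, so $s_{\max}(r)(Nd+1)\ge(p-d+1)(Nd+1)$, which is exactly the top of the target interval; this requires $p\ge d$, i.e. $\phi(t,l)\ge l-1$, which follows from $\phi(t,l)\ge\lceil\binom{t}{l}/t\rceil$ (by~\cite{Graham1980}) and the hypothesis $t\ge l+3$ via the elementary bound $\binom{t}{l}\ge t(l-1)$.

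The main obstacle I anticipate is the bookkeeping needed to see that the union of these progressions is \emph{exactly} an interval: because the deficiencies jump by $l-1$ for a fixed $\mathcal F$, contiguity is not automatic but relies on both the overlap inequality $d\le s(N-1)$ (which is where $t\le n-l$ enters) and the residue-by-residue endpoint analysis (which explains why the left endpoint is the slightly unexpected $(l-1)^2+1$ rather than the smallest achievable deficiency $l$). Checking $\phi(t,l)\ge l-1$ is a minor secondary point.
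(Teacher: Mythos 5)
Your proposal is correct, and it reaches the goal by the same key lemma (\Cref{lem:coding_construction}) and the same underlying idea---analyzing achievable deficiencies $\delta=s+\alpha(l-1)$ by residue classes modulo $l-1$---but the covering argument is organized genuinely differently from the paper's. The paper fixes the target deficiency $x$, greedily picks the largest $s\equiv x\pmod{l-1}$ with $s\leq\min\{\lfloor x/l\rfloor,\phi(t,l)\}$, and then verifies $\alpha\leq(n-t)s$ through a case split ($s=\phi(t,l)-j$ versus $s=\lfloor x/l\rfloor-j$) and an explicit decomposition $x=q_1l(l-1)+q_2(l-1)+r$ with somewhat delicate bracket estimates; you instead describe the entire set of achievable deficiencies as a union of arithmetic progressions $\{s(d+1),\dots,s(Nd+1)\}$ (step $d=l-1$) and prove contiguity within each residue class from the single overlap inequality $(s+d)(d+1)\leq s(Nd+1)+d$, equivalent to $d\leq s(N-1)$, which holds precisely because $t\leq n-l$ gives $N-1\geq l-1$. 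Your endpoint analysis is also sound: the residue-wise check explains transparently why the right end of the interval is $\binom{n}{l}-(l-1)^2-1$ (the class $\delta\equiv 0\pmod d$ only starts at $d^2+d$), and the top is reached because every residue class meets $\{p-d+1,\dots,p\}$. The one place where your hypotheses differ slightly is that you need $\phi(t,l)\geq l-1$ (the paper only uses $\phi(t,l)\geq l-2$ to get $s\geq 0$), but your derivation of this from $\phi(t,l)\geq\lceil\binom{t}{l}/t\rceil$ and $t\geq l+3$ is correct, since $\binom{t}{l}/t=\binom{t-1}{l-1}/l\geq\binom{l+2}{3}/l=(l+2)(l+1)/6\geq l-1$. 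On balance your version buys cleaner bookkeeping and makes visible where each hypothesis enters, at the cost of a marginally stronger (but easily verified) lower bound on $\phi(t,l)$; the paper's version is more direct but hides the structure in the $q_1,q_2,r$ computation.
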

\begin{proof}
  Let $m=\binom{n}{l}-x$ be any integer in the interval on the left-hand side of the claimed inclusion, that is, $(l-1)^2+1\leq x\leq (1+(l-1)(n-t))(\phi(t,l)-l+2)$. Let $s$ be the largest integer satisfying $s\equiv x\pmod{l-1}$ and $s\leq\min\{\lfloor x/l\rfloor,\,\phi(t,l)\}$, and define $\alpha=(x-s)/(l-1)$. It follows from $t\geq l+3$ and $\phi(t,l)\geq\frac1t\binom{t}{l}$ that $\phi(t,l)\geq l-2$, and therefore, $s\geq 0$. By \Cref{lem:coding_construction} (and noting that $s\leq\alpha$ follows from $s\leq\frac{x}{l}$), it is sufficient to verify $\alpha\leq(n-t)s$, or equivalently,
  \begin{equation}\label{eq:target}
  x\leq \left(1+(l-1)(n-t)\right)s.    
  \end{equation}  
  By construction, $s=\min\{\lfloor x/l\rfloor,\phi(t,l)\}-j$ for some $j\in\{0,1,\dots,l-2\}$. If $s=\phi(t,l)-j$ then (\ref{eq:target}) is immediate from the assumed upper bound on $x$. For $s=\lfloor x/l\rfloor-j$ we will verify $x\leq (1+l(l-1))s$ which implies (\ref{eq:target}) because $n-t\geq l$ by assumption. We write $x=q_1l(l-1)+q_2(l-1)+r$ with integers $q_1\geq 0$, $0\leq q_2\leq l-1$ and $0\leq r\leq l-2$. Then $s$ is the unique integer with $s\equiv r\pmod{l-1}$ and $sl\leq x<(s+l-1)l$, hence
   \[s= 
     \begin{cases}
       q_1(l-1)+r &\text{if }r\leq q_2,\\
       (q_1-1)(l-1)+r & \text{if }r\geq q_2+1.
     \end{cases}
   \]
   If $r\leq q_2$ then
   \begin{multline*}
       (1+l(l-1))s=q_1(l-1)+r+q_1l(l-1)^2+rl(l-1)\\ 
       =q_1l(l-1)+\left[q_1l(l-1)(l-2)+q_1+rl\right](l-1)+r\geq q_1l(l-1)+l(l-1)+r>x,  
   \end{multline*}
   where the term in square brackets is at least $l$ because $(q_1,r)\neq(0,0)$ which follows from $x>(l-1)^2$.    
   If $r\geq q_2+1$, then $q_2\leq l-3$ and $q_1\geq 1$. The value of $(1+l(l-1))s$ is $(1+l(l-1))(l-1)$ less than the one above, that is, 
   \begin{multline*}
       (1+l(l-1))s=q_1l(l-1)+\left[q_1l(l-1)(l-2)+q_1+rl-1-l(l-1)\right](l-1)+r\\
       q_1l(l-1)+rl(l-1)+r\geq q_1l(l-1)+l(l-1)+r>x,
   \end{multline*}
   which concludes the proof.
\end{proof}
We have now all the ingredients to prove \Cref{prop:flat}. For $l=2$ we apply \Cref{lem:general_partition} with
\[t=
  \begin{cases}
    \lfloor n/2\rfloor & \text{if }n\equiv0\text{ or }1\pmod{4},\\
    \lfloor (n+2)/2\rfloor & \text{if }n\equiv2\text{ or }3\pmod{4}.
  \end{cases}  
\]
Then $\phi(t,2)=t/2$ and with $(n-t+1)\frac{t}{2}=\left\lfloor\frac{(n+1)^2}{8}\right\rfloor$, we obtain
\[\left[\binom{n}{2}-\left\lfloor\frac{(n+1)^2}{8}\right\rfloor,\ \binom{n}{2}-2\right]\subseteq S(n,2),\]
and the result follows with \Cref{prop:large_flat}. For $3\leq l\leq (n-2)/2$ we distinguish three cases.
\begin{description}
    \item[Case 1] $l=k-1$. From $\left\lceil\frac{1}{k}\binom{k}{l}\right\rceil-l+2=3-l\leq 0$ it follows that $\gamma=3+\sum_{i=1}^l\frac{1}{i+1}\binom{2i}{i}$, and we are done by \Cref{prop:large_flat}.
    \item[Case 2] $l=k-2$. If $l\geq 5$, then $\left\lceil\frac{1}{k}\binom{k}{l}\right\rceil-l+2\leq 0$, hence $\gamma=3+\sum_{i=1}^l\frac{1}{i+1}\binom{2i}{i}$, and the same can be checked directly for $l\in\{3,4\}$. Hence, \Cref{prop:large_flat} does the job.
    \item[Case 3] $l\leq k-3$. We apply \Cref{lem:general_partition} with $t=k$, and use $\phi(k,l)\geq\left\lceil\frac{1}{k}\binom{k}{l}\right\rceil$, to deduce 
    \[\left[\binom{n}{l}-(1+(l-1)(n-k))\left(\left\lceil\frac{1}{k}\binom{k}{l}\right\rceil-l+2\right),\ \binom{n}{l}-(l-1)^2-1\right]\subseteq S(n,l).\]
    The result follows with \Cref{prop:large_flat} and $(l-1)^2+1\leq 4+\sum_{i=1}^l\frac{1}{i+1}\binom{2i}{i}$.
\end{description}

\subsection{Proof of \texorpdfstring{\Cref{thm:main_result}}{}}\label{subsec:proof_main_thm}
 Let $I_{n,l}$ be the interval which, according to \Cref{prop:flat}, is contained in $S(n,l)$
 ($2\leq l\leq\lfloor\frac{n-2}{2}\rfloor$). Observing that the left end of $I_{n,2}$ is
 $\binom{n}{2}-\left\lfloor\frac{(n+1)^2}{8}\right\rfloor$, and the right end of
 $I_{n,\lfloor(n-2)/2\rfloor}$ is $\binom{n}{k}-k\left\lceil\frac{k+1}{2}\right\rceil$ (recall that $k=\left\lceil\frac{n}{2}\right\rceil$), we would be done if we could show that, for $3\leq l\leq\lfloor\frac{n-2}{2}\rfloor$ the left end of $I_{n,l}$ is at most one more than the right end of $I_{n,l-1}$. If $l\geq 5$ or $n\geq 15$, this is indeed true (as shown in the next lemma). For $l\in\{3,4\}$ and $n\leq 14$, we will then check that the small gaps between the intervals $I_{n,l}$ can be closed using \Cref{prop:large_flat} and \Cref{lem:coding_construction}.   
\begin{lemma}\label{lem:overlap}
  Let $l$ and $n$ be integers with $l\geq 3$ and $n\geq 2l+3$. Assume further that $l\geq 5$ or
  $n\geq 15$. Then (with $k=\left\lceil\frac{n}{2}\right\rceil$)
  \[(n-l)\left\lceil\frac{n-l+1}{2}\right\rceil-1\leq \max\left\{3+\sum_{i=1}^l\frac{1}{i+1}\binom{2i}{i},\,\left(1+(l-1)(n-k)\right)\left(\left\lceil\frac{1}{k}\binom{k}{l}\right\rceil-l+2\right)\right\}.\]
\end{lemma}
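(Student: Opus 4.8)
The plan is to write $L=(n-l)\lceil(n-l+1)/2\rceil-1$ for the left-hand side and to abbreviate the two competitors on the right by $A=3+C_l$ and $B=\left(1+(l-1)(n-k)\right)\left(\lceil\frac1k\binom kl\rceil-l+2\right)$. The first observation is that the hypothesis $n\geq 2l+3$ forces $k=\lceil n/2\rceil\geq l+2$, so only two regimes can occur: $l=k-2$ (equivalently $n\in\{2l+3,2l+4\}$) and $l\leq k-3$ (equivalently $n\geq 2l+5$). This is the case split from the proof of \Cref{prop:flat}, with the case $l=k-1$ now excluded by the stronger lower bound on $n$. Throughout I use the elementary estimate $\lceil(n-l+1)/2\rceil\leq(n-l+2)/2$, so that $L<\frac{(n-l+1)^2}{2}$.

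In the regime $l=k-2$ I aim for $L\leq A$. Here the hypothesis forces $l\geq 5$: if $l\in\{3,4\}$ then $l=k-2$ would give $n\leq 12$, contradicting the remaining alternative $n\geq 15$. Since $n-l\leq l+4$, it suffices to prove the numerical inequality $(l+4)\lceil(l+5)/2\rceil-1\leq 3+C_l$ for $l\geq 5$. I would verify the base case $l=5$ directly ($44\leq 67$) and then note that the increment $C_{l+1}-C_l$ is the $(l+1)$-st Catalan number, which dwarfs the linear increment of the left-hand side for all $l\geq 5$, so the inequality propagates.

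The substance of the lemma lies in the regime $l\leq k-3$, where I must establish $L\leq B$ and where $k\geq 8$ always holds (either $l\geq 5$ forces $k\geq l+3\geq 8$, or $l\in\{3,4\}$ together with $n\geq 15$ forces $k\geq 8$). The plan is to lower-bound $B$ by replacing $\binom kl$ with its minimum $\binom k3=\frac{k(k-1)(k-2)}{6}$ over the range $3\leq l\leq k-3$, using $n-k\geq k-1$, and dropping the ceiling. For $l\geq 5$ this gives $1+(l-1)(n-k)\geq 4k-3$ and $\lceil\frac1k\binom kl\rceil-l+2\geq\frac{(k-1)(k-2)}{6}-(k-3)+2$, while $L<2(k-2)^2$; the claim then reduces to the single cubic inequality
\[4k^3-51k^2+203k-144\geq 0\qquad(k\geq 8),\]
which is positive at $k=8$ and increasing thereafter. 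For $l=3$ and $l=4$ the binomial $\binom kl$ is too small for this blanket bound, so I treat these two values separately, substituting $\binom k3$ and $\binom k4$ explicitly and distinguishing the parities $n=2k$ and $n=2k-1$; each of the four resulting cases again reduces to a cubic in $k$ that is positive and increasing for $k\geq 8$ (for instance $2k^3-19k^2+31k-14\geq 0$ for $l=3$, $n=2k-1$).

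The main obstacle is precisely this last regime for $l=3$, where the inequality is genuinely tight: at the smallest admissible value $(l,n)=(3,15)$ one has $L=83$ and $B=90$, leaving almost no slack. Consequently no crude uniform bound suffices (worst-casing both factors of $B$ independently already fails at $k=8$), and the delicate point is to retain enough precision—keeping the exact value $n-k=k-1$ for odd $n$ rather than a weaker estimate, and not over-relaxing $\lceil\frac1k\binom k3\rceil-1$—so that the cubic remainder stays nonnegative from $k=8$ on. Once the four small-$l$ cubics and the single $l\geq 5$ cubic are checked at their base value $k=8$ and shown to be increasing (via their positive leading terms and positive first differences there), the two regimes together establish the asserted bound.
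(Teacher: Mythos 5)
Your proof is correct, and while it follows the same broad strategy as the paper's (split according to whether $n-2l$ is small, handle the small range via the Catalan term $3+C_l$ and the rest via the second term, reducing everything to polynomial inequalities in $k$), it differs in one substantive respect: the paper verifies all pairs $(l,n)$ with $n\leq 26$ \emph{by computer}, and only for $n\geq 27$ (hence $k\geq 14$) applies the crude bounds $L\leq 2k^2-5k+2$ and $B\geq(2k-1)\bigl(\frac1k\binom{k}{3}-k\bigr)$, which reduce to $(2k-1)(k-1)(k-14)\geq 0$. You eliminate the computer check entirely by retaining more precision --- treating $l\in\{3,4\}$ separately with the exact values $\binom{k}{3}$, $\binom{k}{4}$ and both parities of $n$, and using $n-k\geq k-1$ rather than a weaker estimate --- so that your polynomial bounds are valid from $k=8$ onward. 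Your regime boundary also sits differently: the paper routes $n\leq 2l+6$ through the Catalan term (base case $l=5$: $66\leq 67$) and $n\geq 2l+7$ through the second term, whereas you route only $n\in\{2l+3,2l+4\}$ through the Catalan term ($44\leq 67$ at $l=5$) and everything from $n=2l+5$ on through the second term. I checked your reductions: for $l\geq 5$ the inequality $2(k-2)^2\leq(4k-3)\cdot\frac{k^2-9k+32}{6}$ does reduce to $4k^3-51k^2+203k-144\geq 0$, which equals $264$ at $k=8$ and is increasing there; for $l=3$, $n=2k-1$ your cubic $2k^3-19k^2+31k-14$ equals $42$ at $k=8$, exactly matching the tight case $(l,n)=(3,15)$ with $L=83\leq B=90$; the $l=3$, $n=2k$ case gives $2k^3-17k^2+19k-16\geq 72$ at $k=8$; and the $l=4$ cases have ample slack. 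One cosmetic slip: for $l=4$ the explicit substitution of $\binom{k}{4}$ produces a \emph{quartic} in $k$ (the linear factor times the cubic $\frac1k\binom{k}{4}$), not a cubic, but since its leading coefficient is positive and the slack at $k=8$ is large, nothing breaks. In short, the paper's proof is shorter at the price of a finite computation over $15\leq n\leq 26$; yours is fully by hand at the price of four extra small cases, and both are sound.
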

\begin{proof}
  For $n\leq 2l+6$, we can assume $l\geq 5$. Bounding the left-hand side from above and the right-hand side from below, it is sufficient to verify
  \[(l+6)\left\lceil\frac{l+7}{2}\right\rceil\leq 3+\sum_{i=1}^{l}\frac{1}{i+1}\binom{2i}{i}.\]
  For $l=5$, this is $66\leq 67$, and for $l\geq 6$ we proceed by induction on $l$:
  \[(l+6)\left\lceil\frac{l+7}{2}\right\rceil\leq(l+5)\left\lceil\frac{l+6}{2}\right\rceil+\frac32l+9\leq 3+\sum_{i=1}^{l-1}\frac{1}{i+1}\binom{2i}{i}+\frac32l+9\leq 3+\sum_{i=1}^{l}\frac{1}{i+1}\binom{2i}{i}.\]
  For $n\geq 2l+7$, we complete the proof by showing
  \[(n-l)\left\lceil\frac{n-l+1}{2}\right\rceil-1\leq\left(1+(l-1)(n-k)\right)\left(\left\lceil\frac{1}{k}\binom{k}{l}\right\rceil-l+2\right).\]
  For $n\leq 26$, we just check all pairs $(l,n)$ by computer. For $n\geq 27$, the following crude bounds are good enough. For the left-hand side,
  \[(n-l)\left\lceil\frac{n-l+1}{2}\right\rceil-1\leq(2k-3)(k-1)-1=2k^2-5k+2,\]
  and for the right-hand side,
\[\left(1+(l-1)(n-k)\right)\left(\left\lceil\frac{1}{k}\binom{k}{l}\right\rceil-l+2\right)\geq(2k-1)\left(\frac1k\binom{k}{3}-k\right)=\frac{2k^3-19k^2+13k-2}{6}.\]
  The claim follows from $2k^3-31k^2+43k-14\geq 0$, and this inequality is a consequence of $2k^3-31k^2+43k-14=(2k-1)(k-1)(k-14)$ and $k\geq 14$. 
\end{proof}
Finally, we deal with the cases $l\in\{3,4\}$ and $n\leq 14$, and prove that the gap between the intervals $I_{n,l-1}\subseteq S(n,l-1)$ and $I_{n,l}\subseteq S(n,l)$ that are obtained from \Cref{prop:flat} is contained in $S(n,l-1)\cup S(n,l)$.
\begin{lemma}\label{lem:small_l}
  Let $l\in\{3,4\}$ and $2l+2\leq n\leq 14$. The integers between the interval in $S(n,l-1)$ and the interval in $S(n,l)$ from \Cref{prop:flat} are contained in $S(n,l-1)\cup S(n,l)$.
\end{lemma}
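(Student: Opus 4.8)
The plan is a finite verification. The hypotheses of \Cref{lem:overlap} leave exactly the twelve pairs $(n,l)$ with $l=3$, $8\leq n\leq 14$, and $l=4$, $10\leq n\leq 14$, and I would dispatch each by combining \Cref{prop:large_flat} (to enlarge the lower interval $I_{n,l-1}$) with \Cref{lem:coding_construction}/\Cref{lem:general_partition} (to enlarge the upper interval $I_{n,l}$ downward), recording the resulting bounds in \Cref{tab:intervals}. Writing each size as $\binom{n}{l}-x$, the gap to be closed consists of the $x$ with $\gamma<x<(n-l)\bigl\lceil\tfrac{n-l+1}{2}\bigr\rceil$, where $\gamma$ is the quantity from \Cref{prop:flat}; this is precisely the range left open when the inequality of \Cref{lem:overlap} fails.

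First I would push up the right end of the lower interval. Applying \Cref{prop:large_flat} at level $l-1$ replaces the right end $\binom{n}{l}-(n-l)\lceil\tfrac{n-l+1}{2}\rceil$ of $I_{n,l-1}$ by $\binom{n}{l}-(n-l)\lceil\tfrac{n-l+1}{2}\rceil+f(n-l+1)$, i.e.\ it lowers the top of the gap (in the variable $x$) by $f(n-l+1)$. For the two boundary pairs $n=2l+2$, namely $(8,3)$ and $(10,4)$, this single step already finishes: for $(10,4)$ there is in fact no gap, since $3+C_4=25\geq (n-l)\lceil\tfrac{n-l+1}{2}\rceil-1=23$; and for $(8,3)$ the extension reaches size $44=\binom{8}{3}-12$, which is adjacent to the left end $45=\binom{8}{3}-11$ of $I_{8,3}$.

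For the remaining pairs, all of which satisfy $n\geq 2l+3$, I would fill the residual gap from within $S(n,l)$ using the coding construction. Here $l+3\leq n-l$, so some $t$ with $l+3\leq t\leq n-l$ is admissible, and \Cref{lem:general_partition} supplies the interval $\bigl[\binom{n}{l}-(1+(l-1)(n-t))(\phi(t,l)-l+2),\ \binom{n}{l}-(l-1)^2-1\bigr]\subseteq S(n,l)$. Since $(l-1)^2+1<3+C_l\leq\gamma$, the top of this interval lies above the bottom $\binom{n}{l}-\gamma$ of $I_{n,l}$, so the two pieces of $S(n,l)$ join; it therefore suffices to verify, using only the lower bound $\phi(t,l)\geq\lceil\tfrac1t\binom{t}{l}\rceil$, that
\[\bigl(1+(l-1)(n-t)\bigr)\bigl(\phi(t,l)-l+2\bigr)\ \geq\ (n-l)\Bigl\lceil\tfrac{n-l+1}{2}\Bigr\rceil-f(n-l+1)-1\]
for a suitable $t$. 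For each pair this reduces to one numerical inequality; for instance for $(9,3)$ the only choice is $t=6$, where $\phi(6,3)\geq 4$ gives left side $\geq 21$ against right side $19$, and for larger $n$ the left side grows far faster, so the inequality holds with room to spare.

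The only point needing care is this second step for the smallest admissible $n$, where the permitted $t$ are few and $n-t$ is small, so the factor $1+(l-1)(n-t)$ multiplying $\phi(t,l)-l+2$ is not large; there one must ensure the chosen $t$ yields a value of $\phi(t,l)$ big enough for the displayed inequality. Because \Cref{prop:large_flat} has already lowered the target by the extra $f(n-l+1)$, the demand on $\phi(t,l)$ is mild, and the crude estimate $\phi(t,l)\geq\lceil\tfrac1t\binom{t}{l}\rceil$ turns out to suffice in every case; the verification for all twelve pairs is summarised in \Cref{tab:intervals}.
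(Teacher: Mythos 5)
Your proposal is correct and uses exactly the paper's two ingredients — the right-extension by $f$ coming from \Cref{prop:large_flat} and the coding construction of \Cref{lem:coding_construction}/\Cref{lem:general_partition} — so in substance it matches the paper's proof; the difference is organizational. The paper simply tabulates the \Cref{prop:large_flat} intervals for both levels (\Cref{tab:intervals}), observes that they already overlap for all but three pairs, and then invokes \Cref{lem:general_partition} only for $(l,n)\in\{(3,9),(3,10),(4,12)\}$ with $t=6,6,7$ respectively. You instead run a single uniform inequality over all ten pairs with $n\geq 2l+3$, which buys a cleaner criterion but asks the coding construction to do work the table already does for free. Your treatment of the boundary pairs $n=2l+2$ (where $t\leq n-l<l+3$ makes \Cref{lem:general_partition} inapplicable) is correct: $(10,4)$ has no gap since $3+C_4=25>23$, and for $(8,3)$ the $f(6)=3$ extension reaches $44$, adjacent to $45$.

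One claim in your write-up is wrong as stated and would trip up the finite verification if taken literally: ``for larger $n$ the left side grows far faster.'' For \emph{fixed} $t$ the opposite holds — the left side $\bigl(1+(l-1)(n-t)\bigr)\bigl(\phi(t,l)-l+2\bigr)$ is linear in $n$ while the right side is quadratic — and indeed at $(l,n)=(3,13)$ the minimal admissible $t=6$ gives $3\cdot 15=45<49=10\cdot 6-f(11)-1$, and at $(3,14)$ it gives $51<54$; both cases need $t=7$, where $\phi(7,3)\geq 5$ yields $13\cdot 4=52\geq 49$ and $15\cdot 4=60\geq 54$. So the growth comes from letting $t$ increase with $n$ (since $\phi(t,l)\geq\lceil\tfrac1t\binom{t}{l}\rceil$ grows like $t^{l-1}$), not from the factor $1+(l-1)(n-t)$. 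Since you explicitly allow $t$ to range over $l+3\leq t\leq n-l$ and flag that the choice of $t$ needs care, the plan goes through once each of the ten inequalities is actually checked with a suitable $t$ (they all hold), but as written the justification is under-verified: only $(9,3)$ is computed, and the deferred ``summary'' points to \Cref{tab:intervals}, which in the paper records the \Cref{prop:large_flat} intervals rather than these coding-construction bounds.
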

\begin{proof}
The intervals from \Cref{prop:large_flat}, shown in \Cref{tab:intervals}, are a bit longer, and this is already enough to close the gap in most cases. 
\begin{table}[htb]
\centering
\caption{Intervals from \Cref{prop:large_flat}}\label{tab:intervals}
    \begin{tabular}{cccc}\toprule
         $n$ & interval in $S(n,2)$ & interval in $S(n,3)$ & interval in $S(n,4)$ \\ \midrule
         $8$ & $[18,44]$ & $[45,61]$ & -- \\
         $9$ & $[24,64]$ & $[73,114]$ & -- \\
         $10$ & $[30,96]$ & $[109,190]$ & $[185,240]$ \\
         $11$ & $[37,132]$ & $[132,306]$ & $[305,442]$ \\
         $12$ & $[45,182]$ & $[181,462]$ & $[470,768]$ \\
         $13$ & $[54,236]$ & $[234,677]$ & $[658,1254]$ \\
         $14$ & $[63,309]$ & $[304,951]$ & $[935,1964]$ \\  \bottomrule
    \end{tabular}
\end{table}
The only remaining gaps occur between the interval in $S(n,l-1)$ and the interval in $S(n,l)$ for $(l,n)\in\{(3,9),(3,10),(4,12)\}$, and those are closed using \Cref{lem:general_partition}.
  \begin{itemize}
  \item For $(l,n)=(3,9)$, \Cref{lem:general_partition} with $t=6$ implies $[63,79]\subseteq S(9,3)$.
  \item For $(l,n)=(3,10)$, \Cref{lem:general_partition} with $t=6$ implies $[93,115]\subseteq S(10,3)$.
  \item For $(l,n)=(4,12)$, \Cref{lem:general_partition} with $t=7$ implies $[447,485]\subseteq S(12,4)$.\qedhere
  \end{itemize}
\end{proof}
Combining \Cref{prop:flat} with \Cref{lem:overlap,lem:small_l}, we obtain that
\[\left[\binom{n}{2}-\left\lfloor\frac{(n+1)^2}{8}\right\rfloor,\,\binom{n}{k}-k\left\lfloor\frac{k+1}{2}\right\rfloor\right]\subseteq \bigcup_{l=2}^{\lfloor(n-2)/2\rfloor} S(n,l),\]
and this concludes the proof of \Cref{thm:main_result}.

\section{Open problems}\label{sec:open_problems}
In this paper we proved that almost all sizes of maximal antichains are attained by flat antichains (the exception being sizes below $\binom{n}{2}-\left\lfloor\frac{(n+1)^2}{8}\right\rfloor$). We obtained this by establishing the existence of sufficiently long intervals in the flat maximal antichain size spectra 
\[S(n,l)=\left\{\left\lvert\mathcal A\right\rvert\,:\,\mathcal A\subseteq\binom{[n]}{l}\cup\binom{[n]}{l+1}\text{ is a maximal antichain in }B_n\right\}\]
for $2\leq l\leq \lceil n/2\rceil$.
We think it would be interesting to determine the sets $S(n,l)$ exactly, and construct the corresponding flat maximal antichains. 
\begin{problem}
    Determine the sets $S(n,l)$ and describe the corresponding constructions for flat maximal antichains.
\end{problem}
We already know that $S(n,l)$ contains the long interval from \Cref{prop:flat}, and it remains to be determined what happens above $\binom{n}{l+1}-(n-l-1)\left\lceil\frac{n-l}{2}\right\rceil$ and for $l\geq 3$ also what happens below $\binom{n}{l}-\gamma$. For $l\geq 4$, it follows from \cite[Theorem 2]{Griggs2023} that for $m$ with $\binom{n}{l+1}-(n-l-1)\left\lceil\frac{n-l}{2}\right\rceil<m<\binom{n}{l+1}$, there is a maximal antichain $\mathcal A\subseteq\binom{[n]}{l}\cup\binom{[n]}{l+1}$ of size $\abs{\mathcal A}=m$ if and only if
\begin{equation}\label{eq:gaps}
  m=\binom{n}{l+1}-t(n-l-1)+\binom{a}{2}+\binom{b}{2}+c  
\end{equation}
for some $t\in\{1,2,\dots,n-l+1\}$, $a\geq b\geq c\geq 0$, $1\leq a+b\leq t$. For $l\in\{2,3\}$ this condition on $m$ is still necessary, but its sufficiency does not follow immediately from the results in~\cite{Griggs2023}. 
\begin{problem}
Assume that 
$l\in\{2,3\}$ and $\binom{n}{l+1}-(n-l-1)\left\lceil\frac{n-l}{2}\right\rceil<m<\binom{n}{l+1}$, where $m$ has the form (\ref{eq:gaps}).
    Prove that $m\in S(n,l)$.
\end{problem}
For $l=2$, this would provide a complete description of the set $S(n,2)$: it's the interval from $\min S(n,2)=\binom{n}{2}-\lfloor\frac{(n+1)^2}{8}\rfloor$ to $\binom{n}{3}$ with some gaps above $\binom{n}{3}-(n-3)\lceil\frac{n-2}{2}\rceil$. It seems plausible that $S(n,l)$ has this form also for $l\geq 3$.
\begin{problem}
    For $l\geq 3$, prove that every integer $m$ between $\min S(n,l)$ and $\binom{n}{l+1}-(n-l-1)\lceil\frac{n-l}{2}\rceil$ is an element of $S(n,l)$.
\end{problem}
For a complete description of $S(n,l)$ it would then be sufficient to solve the following problem.
\begin{problem}\label{prob:min_size}
    Determine $\min S(n,l)$, at least asymptotically for $n\to\infty$ and fixed $l\geq 3$.
\end{problem}
This is also interesting with a view towards the recent work on the domination number of the graph induced by two consecutive levels of the $n$-cube \cite{Badakhshian_2019,Balogh2021}, as $\min S(n,l)$ is the independent domination number of this graph. \Cref{prob:min_size} appears to be challenging. For $l\geq 3$, the best bounds we are aware of are from \cite{Gerbner2012}: 
\begin{equation}\label{eq:gerbner_bounds}
  \left(1-\frac{l-1}{l}t_l-o(1)\right)\binom{n}{l}\leq\min S(n,l)\leq\left(1-\frac12\left(\frac{l-1}{l}\right)^{l-1}+o(1)\right)\binom{n}{l},  
\end{equation}
where the \emph{Tur\'an number} $t_l$ is the limit, for $n\to\infty$, of the maximal density of an $l$-uniform hypergraph on $n$ vertices which does not contain a complete $l$-uniform hypergraph on $l+1$ vertices. As a more modest aim, it would be nice to solve the following problem, the second part of which has been stated already in \cite{Gerbner2012}.
\begin{problem}\label{prob:imcremental_improvement}
\begin{enumerate}[(a)]
    \item Find constructions for maximal flat antichains in $\binom{[n]}{l}\cup\binom{[n]}{l+1}$ which are smaller than the upper bound from \cite{Gerbner2012}.
    \item Find lower bounds for $\min S(n,l)$ which do not depend on Tur\'an numbers.
\end{enumerate}    
\end{problem}
The lower bound for $\min S(n,l)$ in (\ref{eq:gerbner_bounds}) is still valid for the domination number without the requirement that the dominating set is independent \cite{Balogh2021}. For $l=3$, the authors of \cite{Balogh2021} provide a construction of a dominating set whose size is asymptotically equal to the lower bound (assuming the conjectured value of $t_3=\frac59$), and they conjecture that the lower bound is asymptotically sharp in general. Hence part (b) of \Cref{prob:imcremental_improvement} is a step towards separating the domination number from the independent domination number for $l\geq 3$.







\printbibliography

\end{document}